\documentclass[11pt,oneside,english,reqno]{amsart}
\usepackage[T1]{fontenc}
\usepackage[utf8]{inputenc}
\usepackage[a4paper]{geometry}
\geometry{verbose,lmargin=2.2cm,rmargin=2.2cm, tmargin=3cm, bmargin=3cm}
\usepackage{mathrsfs}
\usepackage{amstext}
\usepackage{amsthm}
\usepackage{amssymb}
\usepackage{bbm}
\usepackage{shuffle}
\usepackage{color}
\usepackage{float}
\usepackage{enumerate}
\usepackage{verbatim} 
\usepackage{stmaryrd}
\usepackage{enumitem}
\usepackage{graphicx}
\usepackage{todonotes}
\usepackage{cancel}
\usepackage[normalem]{ulem}
\usepackage{subcaption}

\usepackage{hyperref}

\usetikzlibrary{arrows.meta, shapes.geometric}
\usepackage{tikz, pgfplots}
\usepackage{tikz-cd}
\usetikzlibrary{matrix,chains,positioning,decorations.pathreplacing,arrows}
\usetikzlibrary{calc,positioning}


\makeatletter
\numberwithin{equation}{section}
\numberwithin{figure}{section}
\theoremstyle{plain}
\newtheorem{thm}{\protect\theoremname}
\numberwithin{thm}{section}
\theoremstyle{definition}
\newtheorem{defn}[thm]{\protect\definitionname}
\theoremstyle{remark}
\newtheorem{rem}[thm]{\protect\remarkname}
\theoremstyle{plain}
\newtheorem{lem}[thm]{\protect\lemmaname}
\theoremstyle{plain}
\newtheorem{prop}[thm]{\protect\propositionname}
\theoremstyle{plain}
\newtheorem{cor}[thm]{\protect\corollaryname}
\theoremstyle{plain}
\newtheorem{example}[thm]{\protect\examplename}

\makeatother

\usepackage{babel}
\providecommand{\corollaryname}{Corollary}
\providecommand{\definitionname}{Definition}
\providecommand{\lemmaname}{Lemma}
\providecommand{\propositionname}{Proposition}
\providecommand{\remarkname}{Remark}
\providecommand{\theoremname}{Theorem}
\providecommand{\examplename}{Example}

\newcommand{\red}[1]{{\color{red} #1}}
\newcommand{\blue}[1]{{\color{blue} #1}}

\newcommand{\cA}{\mathcal{A}}

\newcommand{\cH}{\mathcal{H}}

\newcommand{\cS}{\mathcal{S}}

\newcommand{\sig}{\mathbf{Sig}}
\newcommand{\bsig}{\mathbf{BSig}}

\newcommand{\trees}{\mathcal{T}}


\newcommand{\BB}{\mathbb{B}}

\newcommand{\NN}{\mathbb{N}}

\newcommand{\RR}{\mathbb{R}}

\newcommand{\VV}{\mathbb{V}}

\newcommand{\bbX}{\Bar{\mathbb{X}}}
\newcommand{\bfX}{\Bar{\mathbf{X}}}


\newcommand{\bB}{\mathbf{B}}

\newcommand{\br}{\mathbf{r}}

\newcommand{\bW}{\mathbf{W}}
\newcommand{\bX}{\mathbf{X}}
\newcommand{\bx}{\mathbf{x}}
\newcommand{\bY}{\mathbf{Y}}

\newcommand{\bZ}{\mathbf{Z}}

\newcommand{\se}{\mathsf e}


\newcommand{\bu}{\mathbf{u}}
\newcommand{\bv}{\mathbf{v}}

\newcommand{\bw}{\mathbf{w}}

\newcommand{\hopf}{\mathscr{H}}


\newcommand{\Id}{\mathrm{Id}}

\setcounter{tocdepth}{1}

\newenvironment{tree}
               {\begin{tikzpicture}[every node/.style={circle,draw,fill,minimum size=3pt,inner sep=0pt,outer sep=0pt},line cap=round,baseline = .1pt]}
               {\end{tikzpicture}}

\def\innerprod#1{\langle #1 \rangle}

\usepackage[external]{forest}

\def\dtR<#1>{\Forest{[#1]}}
\def\dtI<#1#2>{\Forest{[#1[#2]]}}
\def\dtII<#1#2#3>{\Forest{[#1[#2[#3]]]}}
\def\dtV<#1#2#3>{\Forest{[#1[#2][#3]]}}

\forestset{
  decor/.style = {
    label/.expanded = {[inner sep = 0.1ex, font=\unexpanded{\tiny}]right:{$#1$}}
  },
  root/.style = {minimum size = 0.5ex},
  decorated/.style = {
    for tree = {
      circle, fill, inner sep = 0ex, minimum size = 0.5ex,
      grow' = north, l = 0, l sep = 0.8ex, s sep = 0.5em,
      fit = tight, parent anchor = center, child anchor = center,
      delay = {decor/.option = content, content =}
    }
  },
  default preamble = {decorated, root},
  begin draw/.code={\begin{tikzpicture}[baseline={([yshift=-0.5ex]current bounding box.center)}]},
}
\setcounter{tocdepth}{3}    
\setcounter{secnumdepth}{3}

\title{Branched Signature Model}
\author{
Munawar Ali$^*$~~and~~Qi Feng$^\dagger$
}
\thanks{$^*$: Department of
Mathematics, Florida State University, Tallahassee, 32306; email: ma22bm@fsu.edu.}
\thanks{$^\dagger$: Department of
Mathematics, Florida State University, Tallahassee, 32306; email: qfeng2@fsu.edu. This author is partially supported by the National Science Foundation under grant \#DMS-2420029.}
\date{\today}

\begin{document}
\sloppy
\maketitle

\begin{abstract}
In this paper, we introduce the branched signature model, motivated by the branched rough path framework of [Gubinelli, Journal of Differential Equations, 248(4), 2010], which generalizes the classical geometric rough path. We establish a universal approximation theorem for the branched signature model and demonstrate that iterative compositions of lower-level signature maps can approximate higher-level signatures. Furthermore, building on the existence of the extension map proposed in [Hairer-Kelly. Annales de l'Institue Henri Poincar\'e, Probabilit\'es et Statistiques 51, no. 1 (2015)], we show how to explicitly construct the extension of the original paths into higher-dimensional spaces via a map $\Psi$, so that the branched signature can be realized as the classical geometric signature of the extended path. This framework not only provides an efficient computational scheme for branched signatures but also opens new avenues for data-driven modeling and applications.
\end{abstract}

\noindent
{\textit{\bf Keywords}: {Branched Signature; Branched Signature model; Universal approximation theorem; fractional Brownian motion; Hopf algebra.}}

\noindent
{\textit{\bf 2000 AMS Mathematics subject classification}: 62P05, 60G17, 65C20, 65C30, 60H10, 91B70.} 

\section{Introduction}
\textbf{Background and motivation.} 
The signature of a bounded variation path \(\bX:[0,T] \to \RR^d\) is defined from the iterated integrals of $\bX$. More precisely, the $N$-th order signature of $\bX$ is  defiend as, 
\begin{align}\label{def: sig}
\sig^N(\bX)_{st}= \sum_{k=0}^N \sum_{\{i_1,\cdots,i_k\}\in \{ 1,\cdots d\} }\int_{s< t_1 <\cdots <t_k<t} d\bX^{i_1}_{t_1}\cdots d\bX^{i_k}_{t_k} \se_{i_1}\otimes \cdots\otimes \se_{i_k}, 
\end{align}
for $0\le s\le t< \infty$. The concept of the signature was first introduced by Chen in the 1950s \cite{chen1954iterated}. Since then, the notion of the signature has been extended to a much broader class of paths and has become a fundamental object in Lyons’ rough path theory \cite{lyons1998differential}.  For example, for a $d$-dimensional fractional Brownian motion $B^H$, the $N$-th order signature $\sig^N(B^H)_{st}$ exists almost surely given the Hurst parameter $H>1/4$ (see \cite{Friz_Victoir_2010}[Chapter 14]). From its very definition, the $N$-th order signature lives in the truncated tensor algebra $T_N(\mathbb R^d)$. By viewing $T_N(\mathbb{R}^d)$ as a flat linear space, one can construct $\mathbf{X}$-driven models using linear combinations of the signature components. Such models are commonly referred to as {\it signature models} in the literature. Furthermore, the signature actually lies in a strict subspace $\mathbb{G}^N(\mathbb{R}^d) \subsetneq T_N(\mathbb{R}^d)$, known as the step-$N$ free Carnot group over $\mathbb{R}^d$ (see e.g.: \cite{baudoin2004introduction}), which endows signature with rich geometric and algebraic (or group) structures. The geometric and algebraic structure enables us to study many properties of signature with the help of pre-existing results in the theory of group and algebraic structures. One such property is the  multiplicativity that if we multiply two components of a signature (like a group product), we get another component of the signature or any linear combination of the components of the signature. A simple example is the integration by parts formula (or the Chain rule), which can be recast in terms of signatures as follows,
\begin{align}\label{IBP}
\int_s^t d\bX^i_r \int_s^t d\bX^j_r = \int_s^t \int_s^v d\bX^i_u d\bX^j_v + \int_s^t \int_s^v d\bX^j_u d\bX^i_v.
\end{align}
This nice geometric property has then been used as in the very definition of geometric rough path (see e.g.:\cite{friz2014course}[Chapter 2]). Under this geometric framework, the signature method has become a powerful tool in data science as it helps to study the properties of a data stream (e.g. extraction of characteristic features from the data, \cite{levin2013learning}) and answer many questions associated to data-driven problems \cite{ chevyrev2016primer}. The questions may be related to finding patterns in the data and approximating missing information. In machine learning, some recent applications of signatures are image and texture classification using 2D signatures \cite{zhang2022two}. Also, the sensitivity of signature to the geometric structure of data has made it particularly effective in applications such as Chinese character recognition. For instance, \cite{graham2013sparse} reported a test error of 3.58\% using a sparse signature-based model, outperforming the 5.61\% test error obtained using traditional convolutional neural networks (CNNs) \cite{schmidhuber2012multi}. In mathematical finance, {\it signature models} have been employed in various applications, including the pricing of path-dependent options—also known as signature payoffs \cite{arribas2018derivatives}—model calibration using such payoffs \cite{cuchiero2023signature, cuchiero2025joint}, and the construction of cubature formulas on Wiener space \cite{lyons2004cubature}. Additional applications include optimal execution \cite{kalsi2020optimal}, optimal stopping \cite{bayer2023optimal}, and stochastic optimal control \cite{bank2024stochastic}. 
Moreover, signatures have been employed in generating synthetic data \cite{kidger2019deep} and in topological data analysis \cite{chevyrev2018persistence}. 

Nonetheless, in real-world data-driven settings, the underlying data often possess intrinsic manifold structures of much lower dimension \cite{tenenbaum2000global, roweis2000nonlinear, belkin2003laplacian,fefferman2016testing, pope2021intrinsic}, leading to manifold-valued paths that generally fail to satisfy the geometric property \eqref{IBP} (see e.g.: \cite{armstrong2022non}). In fact, the geometric property—such as the integration by parts identity \eqref{IBP}—does not hold in general for arbitrary paths \(\bX:[0,T] \to \RR^d\). A prominent example is the  Brownian motion in the It\^o's integration form (see e.g.: \cite{friz2014course}[Chapter 2]). The lack of the geometric property in the classical signature suggests the need for an alternative framework capable of handling such cases. To address this, we borrow the concept of the branched signature, which extends the classical notion by accounting for paths whose signatures typically do not satisfy the geometric property. This notion is naturally associated with branched (or non-geometric) rough paths \cite{gubinelli2010ramification}. Accordingly, we propose a new modeling framework termed the {\it branched signature model}, designed to accommodate such irregular or manifold-valued data.

As a toy example, consider the task of estimating functionals of underlying processes whose sample paths do not satisfy the geometric property. In such cases, the classical signature framework becomes inadequate, as it inherently relies on this geometric structure. The branched signature, by contrast, offers a more general representation that aligns with the theory of non-geometric rough paths, enabling the treatment of a broader class of stochastic and manifold-valued paths. Consider a function \(F: \RR^2 \to \RR\) that depends on two underlying processes $(\bX^1_t,\bX_t^2)$ that satisfy the following controlled differential equations driven by are two signal processes \(\xi_i:[0,T] \to \RR\), for i=1,2, in \(\RR\), 
\begin{align}
\begin{cases}
d\bX^1_t &= V_1(\bX^1_t, \bX^2_t)d \xi^1_t,\\
d\bX^2_t &=  V_2(\bX^1_t, \bX^2_t)d \xi^2_t,
\end{cases}
\label{intro: X dynamics}
\end{align}
where \( V_i: \RR^2 \to \RR\), $i=1,2$, are smooth functions. Applying Taylor's expansion around $\bX^{i}_s$ with $t>s$, for  $i=1,2$, multiple times gives the following approximation for the underlying process,
\begin{align}
\bX^i_t - \bX^i_s \approx \text{LOT} + \sum_{j,k=1}^{2} C\left( V_1,  V_2, DV_1, DV_2\right) \int_s^t \left( \int_s^v d\xi^j_u \right) \left( \int_s^v d\xi^k_u \right) d\xi^n_v + \text{HOT}, \nonumber
\end{align}
where LOT ( and HOT respectively) stands for lower order terms ( and higher order terms respectively) and \(C\) is a function of \(V_1, V_2\) or any of their derivatives $(DV_1, DV_2)$ evaluated at initial point $(X_s^1,X_s^2)$. The function $F$ of the two underlying processes \eqref{intro: X dynamics} can be approximated as follows,
\begin{align}
&F(X^1_t,X^2_t) - F(X^1_s,X^2_s)\nonumber \\
\approx&  \text{LOT} + \sum_{i,j,k=1}^{2}  \tilde{C} \left(F, V_1, V_2, DF, DV_1, DV_2 \right) \int_s^t \left(\int_s^v d \xi^i_u\right) \left(\int_s^v d \xi^j_u\right) d \xi^k_v + \text{HOT},
\label{expansion of F}
\end{align}
where \(\tilde{C}\) is a function of \(F, V_1, V_2\) or any of their derivatives. It can be observed that the right-hand side of \eqref{expansion of F}, which approximates 
$F$, contains a branched-type term \(\int_s^t \left(\int_s^v d \xi^i_u\right) \left(\int_s^v d \xi^j_u\right) d \xi^k_v\).
Such a structure does not belong to the classical geometric (shuffle) signature, but instead arises naturally in the context of non-geometric rough paths and is represented in the {\it branched signature} framework. This motivates the definition of the {\it branched signature}, an object that extends the classical signature by including not only all iterated integrals but also their possible products, thereby capturing the full range of terms arising in non-geometric rough paths.
These integrals can be represented using a rooted tree structure (see e.g. \cite{gubinelli2010ramification, hairer2015geometric}), where each tree encodes the combinatorial structure of the corresponding iterated integrals and their products. With a slight abuse of notation, we will write $\bsig(\bX)_{st}$ to denote the full branched signature, i.e., the collection of all possible tree-indexed integrals over $[s,t]$. Then the branched signature of level 
$N$ consists of all tree-indexed integrals and can be expressed in the form
\begin{align}
\bsig^N(\bX)_{st} = \left\{ \innerprod{\bsig(\bX)_{st},\tau}, \tau\in \mathcal T, |\tau| \le N \right\},\nonumber 
\end{align}
where $\mathcal T$  denotes the set of rooted trees, and 
\(\tau\)  is the number of nodes in $\tau$. 
For example, for \(\tau = \Forest{decorated[k[i][j]]}\),  we have $\innerprod{\bsig(\bX)_{st}, \Forest{decorated[k[i][j]]}} = \int_s^t \left(\int_s^{v} d\bX_{u}^{\Forest{decorated[i]}}  \right) \left( \int_s^{v} d\bX_{u}^{\Forest{decorated[j]}} \right)d\bX_{v}^{\Forest{decorated[k]}}$. 
These rooted trees are endowed with the Connes–Kreimer Hopf algebra structure \cite{gubinelli2010ramification}, which is a particular example of a Hopf algebra \cite{manchon2008hopf}. Although there is extensive literature on Hopf algebras and on the Connes–Kreimer Hopf algebra of rooted trees, we will discuss both algebraic structures in detail in the next section to keep the manuscript self-contained. We also refer the reader to the next section for a comprehensive introduction to branched signature models.

\textbf{Main results.} After introducing the {\it branched signature model}, we first establish the universal approximation property for the {\it branched signature model}. A key property of the classical signature model is its universal approximation property, where the geometric nature of classical signatures plays a central role in the proof. Unfortunately, such a geometric property is not available for {\it branched signatures}. To establish a version of such an universal approximation property for {\it branched signature models}, we used the idea of extended path from \cite{hairer2015geometric}, which maps the original paths \(\bX: [0,T] \to \RR^d\) to an extended path \(\Bar{\bX}: [0,T] \to \RR^e\), where \(e\) is much bigger than \(d\). For such an extended paths  \(\Bar{\bX}\) over a given path \(\bX\), for any rooted tree $h$, the {\it branched signature} component corresponding to $h$ coincides with the classical signature component corresponding to a basis element $\Psi(h)$ of a suitable tensor algebra (i.e. Hopf algebra), which can be represented as ,
\begin{equation*}
        \innerprod{\bsig(\bX)_{st}, h} = \innerprod{\sig(\Bar{\bX})_{st}, \Psi(h)}.
\end{equation*}
The existence of such a path and the map $\Psi$ is established in \cite{hairer2015geometric}, although their uniqueness remains unknown and the explicit construction of $\Psi$ is not provided.  Nevertheless, this existence result is sufficient for many applications, as the extended path inherits several desirable properties, such as satisfying Chen’s identity, and allowing the use of classical signature tools for analysis and approximation.

Building on this idea, we next present an explicit construction of the extended paths $\Bar{\bX}$
  and introduce a dimension-reduction algorithm to mitigate the computational complexity. The extended path \(\Bar{X}\) is significantly higher-dimensional than the original path \(X\). As a result, computing the classical signature for $\sig(\Bar{\bX})$ up to a given level $N$ can be computationally expensive. 
  To address this, we adopt a strategy in which a lower-order signature is computed repeatedly. Specifically, if we regard the lower-order signature of order $k$ as a map, we can compose this map multiple times to obtain the signature up to level 
$N$. This algorithm will be made rigorous in the following sections. An illustration of the idea for 
$k=2$ is given below in  Figure~\ref{fig1-intro-1},
\begin{center}
\begin{figure}[h]
\begin{tikzpicture}[
SIR/.style={rectangle, rounded corners,  draw=red!60, fill=red!5, very thick, minimum size=10mm},
]
\node[SIR]    (A1)                     {$\bX_{t}:$ Data};
\node[SIR]    (A2)       [right=of A1] {$\Bar{\bX}_{t}$};
\node[SIR]    (A3)       [right= 1.5cm of A2] {$\mathbf{S}_{\ell}^2(\Bar{\bX})_{0t}$};
\node[SIR]    (A4)       [right= 1.5cm of A3] {$\mathbf{S}^2_{\ell^{(m)}}\left(\mathbf{S}^2_{\ell^{(m-1)}}\left(\dots \mathbf{S}^2_{\ell^{(1)}}\left(\bar{\bX}\right)_{0t}\right)_{0t}\right)_{0t}$};
\draw[->, thick] (A1.east)  to node[above] {$\Psi$} (A2.west);
\draw[->, thick] (A2.east)  to node[above] {$\mathbf{S}_{\ell}^2(\cdot)$} (A3.west);
\draw[->, thick, dashed] (A3.east)  to node[above] {$\mathbf{S}_{\ell}^2(\cdot)$} (A4.west);
\end{tikzpicture}
\caption{Application of Level 2 signature model on extended path $\Bar{\bX}$}
\label{fig1-intro-1}
\end{figure}
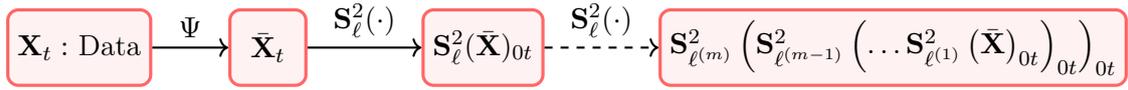
\end{center}
where we denote $\mathbf{S}_{\ell^{(1)}}^2(\bX)_{0t}$ as the second-level signature model paths for the extended path $\Bar{\bX}_t$ and denote $\ell^{(i)}$ as the $i$-th layer signature basis coefficients for $i=1,\cdots,m$. By repeating such second-level signature map $m$ times, we can reach  the desired level $N$ signature model with much lower dimension complexity.

The paper is organized as follows. In Section \ref{section pre}, we introduce the necessary preliminaries for signature and branched signature models. In Section \ref{section universal}, we establish the universal approximation property for
branched signature models, and an iterative version of the branched signature approximation. In Section \ref{section extension}, we provide a constructive method to construct the extended path $\bar{\bX}(t)$ using the map $\Psi$ and provide explicit examples. In Section \ref{section numerical}, we apply our branched signature model to calibrate the stochastic volatility model with a mixture of Brownian motion and fractional Brownian motion.

\section{Preliminaries}\label{section pre}
To keep the manuscript self-contained, let us introduce some terminology, discuss the relevant algebraic structures, and define the geometric and non-geometric(branched) rough paths. As we are working with \(d-\)dimensional paths, so we consider the set of underlying letters to be \(\cS = \{1,2,\dots,d\}\) and we call this the alphabet set. Also, we define a \textbf{word} \(\bw\) of length \(|\bw| = n\) to be a sequence \(\bw = w_1 w_2 \dots w_n\) where \(w_i \in \cS\) for \(i = 1,2,\dots, n\). With this, we denote \(\bW\) to be the set of all words and \(\bW_n\) to be the set of all words of length \(n\). Also, for \(n=0\) we define \(\bW_0\) to be the set with empty word \(\emptyset\) . We denote the vector space generated by \(\bW\) as \(\VV (\cS)\) defined as follows
\[\VV (\cS) = \left\{ \chi = \sum_{\bw \in \bW} c_{\bw} \bw | c_{\bw} \in \RR , c_{\bw} \ne 0 \text{ for finitely many } \bw \in \bW\right\}.\]
We also define the concatenation of two words \(\bw = w_1 w_2 \dots w_n\) and \(\bw' = w'_1 w'_2 \dots w'_m\) to be \(\bw \bw' = w_1 w_2 \dots w_n w'_1 w'_2 \dots w'_m\).
\begin{defn}
The set \(\VV (\cS)\) endowed with product defined as the concatenation of words is a non-commutative algebra.
\end{defn}
In the next subsection, we will define signature of a path and discuss its key properties. Though the signature of path is defined by using the idea of alphabet and words that we introduced earlier, yet we define another more general algebraic structure where signature actually lives. Firstly, we define the \textbf{extended tensor algebra} over \(\RR^d\) to be the space
\[T((\RR^d)) =  \prod_{n=0}^{\infty} (\RR^d)^{\otimes n} = \{\bv = (v_0, v_1, \dots , v_n, \dots ) \; | \; v_n \in (\RR^d)^{\otimes n}, n = 0, 1,\dots \},\]
where \((\RR^d)^{\otimes 0} := \RR\).
Another space, closely related to this one and the one generated by words in \textbf{tensor algebra} over \(\RR^d\) that is defined as
\[T(\RR^d) =  \bigoplus_{n=0}^{\infty} (\RR^d)^{\otimes n} = \{ \bv \in T((\RR^d)) \; | \; \forall \;  \bv \; \exists \;  K \in \NN \text{ such that } v_n = 0 \; \forall \; n \ge K\}.\]
Also, the \textbf{truncated tensor algebra} over \(\RR^d\) is defined as
\[T^N(\RR^d) := \left\{ \mathbf{v} \in T(\RR^d) \; | \; v_n = 0 \; \forall \;  n > N\right\}.\]
It can be easily shown that \(\VV (\cS)\) can be viewed as space of linear functionals on extended tensor algebra \(T((\RR^d))\) that is 
\[ \VV (\cS) \cong  T((\RR^d))^{*}.\]
\subsection{Signature and its properties}
Corresponding to a word \(\bw = w_1 \dots w_n\) in \(\bW \subsetneq \VV (\cS) \) we have the following definition
\begin{defn}
The signature of a continuous \(\RR^d\)-valued path of bounded variation \((\bX_t)_{t \in [0,T]}\) is the \(T((\RR^d))\)-valued process \( (s,t) \in \Delta_T^2 \mapsto \sig(\bX)_{st} \in T((\RR^d))\) whose component corresponding to each word \(\bw = w_1 \dots w_n \in \bW\) is defined as
\[\innerprod{\sig(\bX)_{st}, \bw} := \int_{s}^{t} \int_{s}^{t_k} \cdots \int_{s}^{t_2} d\bX^{w_1}_{t_1}\cdots d\bX^{w_n}_{t_n},\]
where \(\Delta_T^2 := \{ (s,t) \in [0,T]^2 : 0 \leq s \leq t \leq T \}\). Similarly, for an empty word \(\emptyset\) we define \(\innerprod{\sig(\bX)_{st}, \emptyset} := 1\).
To be precise, the signature can be identified as an infinite dimensional object given as
\[\sig(\bX)_{st} = \left(1, \int_s^t d\bX^{w_i}_r, \int_s^t \int_s^{r_2} d\bX^{w_i}_{r_1} d\bX^{w_j}_{r_2}, \int_s^t \int_s^{r_3}\int_s^{r_2} d\bX^{w_i}_{r_1} d\bX^{w_j}_{r_2} d\bX^{w_k}_{r_3}, \cdots \right)_{w_i,w_j,w_k,  \dots \in \cS}.\]
\end{defn}
Furthermore, the level \(N\) truncation of the signature is given as below
\[\sig^N(\bX)_{st} = \sum_{\bw \in \bW : |\bw| \le N} \innerprod{\sig(\bX)_{st}, \bw} e_{\bw},\]
where \(e_{\bw}\) is a basis element of \(T((\RR^d))\).\\

\textbf{Notation:} Throughout, for 
\(s=0\) we employ the following convention \(\sig(\bX)_{0t} =  \sig(\bX)_{t}\) and \(\sig^N(\bX)_{0t} = \sig^N(\bX)_{t}\).

\begin{rem}
For a path of bounded 1-variation, all iterated integrals can be defined in the sense of Riemann--Stieltjes integration. For a path of bounded \( p \)-variation with \( p \in (1,2] \), integrals can be defined in the sense of Young. However, for a path of bounded \( p \)-variation with \( p > 2 \), integrals cannot, in general, be defined using either Riemann--Stieltjes or Young integration. In such cases, the existence and interpretation of the integral depend on the nature of the path itself. For detailed constructions, see \cite{Friz_Victoir_2010}.
\end{rem}

\begin{rem}
As two foundamental examples, Brownian motion has bounded \( p \)-variation for all \( p > 2 \), yet integration with respect to it can be defined in the It\^o or Stratonovich sense. Similarly, for fractional Brownian motion with Hurst parameter \( H \in \left(0, 1\right) \), integration can be formulated in the Skorohod (or divergence) sense (see \cite{biagini2008stochastic})using tools from Malliavin calculus. Unless otherwise stated, we will be working throughout with  \(\alpha\)-H\"older paths for any \(\alpha > 0\) as fractional Brownian motion with Hurst parameter \(H\) is \(\alpha\)-H\"older for \(\alpha = H - \varepsilon\) for arbitrarily small \(\varepsilon>0\). But, for the purpose of construction of the rough path and corresponding topology we may restrict ourselves to \(\alpha > \frac{1}{4}\) sometimes.
\end{rem}

Let us discuss some properties of the signature of a bounded \( p \)-variation path, as \(\alpha\)-H\"older paths have bounded \( p \)-variation for \(p > \lfloor \frac{1}{\alpha} \rfloor\). These properties are not universal i.e., they may fail to hold for arbitrary \(p\) or depending on the definition of iterated integrals. For instance, one such property is an extension of the classical integration by parts formula, which does not hold when the path is a Brownian motion and the iterated integrals are defined in the It\^o sense. To formalize this integration by parts property in the context of bounded variation paths, we introduce the \emph{shuffle product} on set of words \(\bW\). This product also encodes the algebraic structure of the space \(T((\mathbb{R}^d))\) using its basis elements 
\(e_{\bw}\) corresponding to each word \(\bw\).

\begin{defn}
For two words \(\bw = w_1 \dots w_n\) and \(\bw' = w'_1 \dots w'_m\), the shuffle product are defined recursively as follows
\[\bw \shuffle \emptyset = \emptyset \shuffle \bw = \bw, \quad \text{and}\]
\[\bw \shuffle \bw' = \left[(w_1 \dots w_{n-1}) \shuffle \bw' \right] w_n + \left[\bw \shuffle (w'_1 \dots w'_{m-1}) \right] w'_m,\]
where \(\emptyset\) is an empty word.
\end{defn}
Similarly, we define the shuffle product of the basis elements of \(T(\RR^d)\) by setting \(e_{\bw} = e_{w_1} \otimes \dots \otimes e_{w_n}\). Thus, for any two \(\mathbf{u},\mathbf{v} \in T(\RR^d)\) we have 
\[\mathbf{u} \shuffle \mathbf{v} = \sum_{|\bw|,|\bw'| \geq 0} u_{\bw} v_{\bw'} (e_{\bw} \shuffle e_{\bw'}).\]
\

Following the above definition, if we endow the space \(T(\RR^d)\) with the shuffle product \(\shuffle\) then the quadruple \(\left(T(\RR^d), +, \cdot, \shuffle \right)\) is an associative algebra.

The shuffle product plays a crucial role in encoding the multiplicative structure of the signature. In particular, it allows us to express products of iterated integrals as linear combinations of other iterated integrals. This leads to the following fundamental identity satisfied by the signature of a bounded variation path, known as the \emph{shuffle property}.

\begin{defn}[Shuffle Property] \cite{lyons2007differential} Let \((\bX_t)_{t \in [0,T]}\) be a continuous \(\RR^d\)-valued path of bounded variation and \(\bw = w_1 \dots w_n\) and \(\bw' = w'_1 \dots w'_m\) be two words then
\[\innerprod{\sig(\bX)_{st}, \bw} \innerprod{\sig(\bX)_{st}, \bw'} = \innerprod{\sig(\bX)_{st}, \bw \shuffle \bw'}.\]
\begin{rem} \label{p_value_remark}
Shuffle property also holds when the path is of bounded \(p\)-variation for \(p \le 2\) and integrals are defined in Young's sense. It is also valid for Brownian  motion and fractional Brownian motion with Hurst \(H > \frac{1}{4}\) when integrals are defined in the Stratonovich sense.
\end{rem}
We now illustrate how the shuffle property reduces to the classical integration by parts identity when \(|\bw| = 1\) and \(|\bw'| = 1\).
\end{defn}
\begin{example}[Integration by parts]
For a continuous \(\RR^d\)-valued path \((\bX_t)_{t \in [0,T]}\) with \(\bX_0 = 0\) and for \(w_i,w_j \in \{1,2, \cdots, d\}\) integration by parts property is
\[\bX_T^{w_i} \bX_T^{w_j} = \int_0^T \bX_t^{w_i} d\bX_t^{w_j} + \int_0^T \bX_t^{w_j} d\bX_t^{w_i}.\]
Therefore,
\begin{align*}
\innerprod{\sig(\bX)_{T}, w_i} \innerprod{\sig(\bX)_{T}, w_j} & = \innerprod{\sig(\bX)_{T}, w_i \otimes w_j} +  \innerprod{\sig(\bX)_{T}, w_j \otimes w_i} = \innerprod{\sig(\bX)_{T}, w_i \shuffle w_j}
\end{align*}
\end{example}
Hence, integration by parts appears as a special case of the shuffle property. In addition to this, the signature of a \(\mathbb{R}\)-valued path also resembles the structure of the Taylor series basis, as we discuss next.
\begin{example}
For a continuous \(\RR\)-valued path of bounded variation \((\bX_t)_{t \in [0,T]}\), we have \(\innerprod{\sig(\bX)_T, w_1} = \bX_T - \bX_0\). Using the identity
\[
\underbrace{w_1 \shuffle \dots \shuffle w_1}_{k\text{-times}} = k! \, w_1  \dots  w_1,
\]
we can deduce that
\[
\sig(\bX)_T = \left(1, \bX_T - \bX_0 , \frac{1}{2!}(\bX_T - \bX_0)^2, \frac{1}{3!}(\bX_T - \bX_0)^3, \cdots, \frac{1}{k!}(\bX_T - \bX_0)^k, \cdots \right).
\]
\end{example}
\begin{rem}
The statement holds again when paths and integrals are defined as in Remark \ref{p_value_remark} concerning the shuffle product.
\end{rem}
The signature of a data stream is defined as the signature of the piecewise linear interpolation between its data points. From a computational standpoint, this means we compute the signature over each small interval between consecutive data points. To obtain the signature over longer intervals, we iteratively apply a fundamental algebraic rule known as Chen's identity, which describes how to combine signatures over adjacent intervals. We now formally state this identity.
\begin{prop}[Chen's Identity]
\cite{Friz_Victoir_2010}
Let \((\bX_t)_{t \in [0,T]}\) be a continuous, \(\mathbb{R}^d\)-valued path of bounded variation. Then, the concatenated signature over intervals \([s, u]\) and \([u, t]\) satisfies
\[\sig(\bX)_{st} = \sig(\bX)_{su} \otimes \sig(\bX)_{ut},\]
for each \(0 \leq s \leq u \leq t \leq T\). This identity can be equivalently expressed as follows:
\[
\innerprod{\sig(\bX)_{st}, \bw} = \sum_{\bw_1 \bw_2 = \bw} \innerprod{\sig(\bX)_{su}, \bw_1} \innerprod{\sig(\bX)_{ut}, \bw_2} ,
\]
where \(\bw\) is an arbitrary word from \(\bW\).
\end{prop}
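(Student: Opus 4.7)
The statement reduces, via the definition of the tensor product and by matching coefficients against the basis elements indexed by words, to verifying the word-wise identity
\[
\innerprod{\sig(\bX)_{st}, \bw} = \sum_{\bw_1 \bw_2 = \bw} \innerprod{\sig(\bX)_{su}, \bw_1} \innerprod{\sig(\bX)_{ut}, \bw_2},
\]
so I would prove exactly that form and then read off Chen's identity in $T((\RR^d))$. The cleanest route is a direct decomposition of the simplex of integration; induction on $|\bw|$ is an equivalent packaging of the same argument, and I would use induction if the write-up becomes cluttered.

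\textbf{Main steps.} First, I would recall that for $\bw=w_1\cdots w_n$ the component $\innerprod{\sig(\bX)_{st},\bw}$ is the Riemann--Stieltjes integral of $d\bX^{w_1}_{t_1}\cdots d\bX^{w_n}_{t_n}$ over the simplex $\Delta_{st}^{n}=\{s\le t_1\le \cdots\le t_n\le t\}$, which is well defined because $\bX$ has bounded variation. Second, I would partition $\Delta_{st}^{n}$ according to the largest index $k\in\{0,1,\dots,n\}$ for which $t_k\le u$ (with the convention $t_0=s$ and $k=0$ when $t_1>u$); concretely, up to boundary sets of Lebesgue measure zero,
\[
\Delta_{st}^{n}\;=\;\bigsqcup_{k=0}^{n}\Delta_{su}^{k}\times \Delta_{ut}^{n-k}.
\]
Third, on each piece the integrand factors as a product of a function of $(t_1,\dots,t_k)$ and a function of $(t_{k+1},\dots,t_n)$, so Fubini gives
\[
\int_{\Delta_{su}^{k}\times\Delta_{ut}^{n-k}} d\bX^{w_1}_{t_1}\cdots d\bX^{w_n}_{t_n}
=\innerprod{\sig(\bX)_{su},w_1\cdots w_k}\,\innerprod{\sig(\bX)_{ut},w_{k+1}\cdots w_n}.
\]
Summing over $k$ and identifying the deconcatenations $\bw=\bw_1\bw_2$ with $\bw_1=w_1\cdots w_k$ and $\bw_2=w_{k+1}\cdots w_n$ yields the word-wise identity; matching coefficients in $T((\RR^d))$ then gives $\sig(\bX)_{st}=\sig(\bX)_{su}\otimes\sig(\bX)_{ut}$, using that the tensor product in the basis $\{e_{\bw}\}$ is exactly concatenation.

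\textbf{Difficulty.} The only genuinely substantive point is the measure-zero boundary overlap in the simplex decomposition: one needs the endpoint slices $\{t_k=u\}$ (and similar) to contribute nothing. For bounded variation paths this is immediate because iterated Riemann--Stieltjes integrals are continuous in the endpoints of integration and the hyperplanes $\{t_k=u\}$ are lower-dimensional in the simplex. An alternative packaging avoids even this: induct on $n=|\bw|$, writing
\[
\innerprod{\sig(\bX)_{st},\bw}=\int_s^t \innerprod{\sig(\bX)_{sr},w_1\cdots w_{n-1}}\,d\bX^{w_n}_{r},
\]
splitting $\int_s^t=\int_s^u+\int_u^t$, and applying the inductive hypothesis to $\innerprod{\sig(\bX)_{sr},w_1\cdots w_{n-1}}$ for $r\in[u,t]$; the base case $n=0$ is $1=1\cdot 1$, and the algebra of deconcatenations matches the algebra of the split. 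I expect the main bookkeeping obstacle to be only notational: keeping track of the indexing $\bw_1\bw_2=\bw$ as a genuine deconcatenation of words (not a shuffle) so as not to conflate the concatenation product of Chen with the shuffle product introduced earlier.
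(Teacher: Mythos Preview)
Your proposal is correct; both the simplex-decomposition argument and the inductive alternative are standard proofs of Chen's identity for bounded variation paths. Note that the paper itself does not prove this proposition: it is stated with a citation to \cite{Friz_Victoir_2010} and no proof is given, so there is no ``paper's approach'' to compare against.
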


Furthermore, speaking naively, if two functions are equal then their integrals are equal too but the converse is not true in general. Same is true in the case of signatures that is if two paths are equal then their signatures are equal too but the converse is not true in general. However, if the signatures of two paths are equal then the paths are equal up to tree-like equivalence \cite{hambly2010uniqueness}. But for the universal approximation theorem we need paths to be equal in a more restrictive sense. To acquire this result, we enhance the path \(\bX:[0,T] \to \RR^d\)  with an additional time component and denote it by \(\widehat{\bX}:[0,T] \to \RR^{d+1}\) defined by \(\widehat{\bX}:= (t, \bX)\). With this we have the following result.
\begin{lem}[Uniqueness of the classical signature] \label{sig_uiqueness}
Let $\bX,\bY:[0,T]\to\mathbb R^d$ be continuous $\alpha$-H\"older paths with $\bX_0=\bY_0=0$ for some $\alpha >  \frac{1}{4}$.
Form the time-augmented paths $\widehat{\bX}(t):=(t,\bX_t)$ and $\widehat{\bY}(t):=(t,\bY_t)$ in $\mathbb R^{1+d}$.
Assume their (geometric) terminal signatures coincide at all levels i.e.,
\[
\sig(\widehat{\bX})_{0,T}=\sig(\widehat{\bY})_{0,T}.
\]
Then $\bX_t=\bY_t$ for all $t\in[0,T]$.
\end{lem}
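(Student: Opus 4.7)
The plan is to exploit the classical uniqueness-up-to-tree-like-equivalence result for signatures, together with the observation that time-augmentation renders a path tree-reduced. Specifically, I will invoke the Hambly--Lyons theorem \cite{hambly2010uniqueness} (extended to $\alpha$-H\"older paths with $\alpha > 1/4$ in the weakly-geometric rough-path setting by Boedihardjo, Geng, Lyons, and Yang): two paths share the same signature if and only if they are tree-like equivalent. The role of the time-augmentation will be to eliminate all nontrivial members of this equivalence class.

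First, applying the uniqueness theorem to the hypothesis $\sig(\widehat{\bX})_{0,T} = \sig(\widehat{\bY})_{0,T}$ yields tree-like equivalence of $\widehat{\bX}$ and $\widehat{\bY}$ as paths in $\RR^{1+d}$, or equivalently, that the concatenation $\gamma := \widehat{\bX} \ast \widehat{\bY}^{\leftarrow}$ on $[0,2T]$, with $\widehat{\bY}^{\leftarrow}$ denoting the time-reversal of $\widehat{\bY}$, is a tree-like loop. Next, I would argue that each of $\widehat{\bX}$ and $\widehat{\bY}$ is already its own tree-reduced representative. Any nontrivial reduction would require a subinterval on which the path retraces itself, hence two distinct times $s \neq t$ with $\widehat{\bX}(s) = \widehat{\bX}(t)$; this is impossible because the strictly monotone first coordinate $t \mapsto t$ in $\widehat{\bX}(t) = (t,\bX_t)$ is injective. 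The same reasoning applies to $\widehat{\bY}$. Since each tree-like equivalence class admits a unique tree-reduced representative, and the parametrizations of $\widehat{\bX}$ and $\widehat{\bY}$ are pinned down by their common time coordinate, one concludes $\widehat{\bX} = \widehat{\bY}$ pointwise. Projecting to the last $d$ coordinates then gives $\bX_t = \bY_t$ for every $t \in [0,T]$.

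The main obstacle is importing the uniqueness theorem from the bounded-variation setting to the rough-path regime $\alpha > 1/4$. One must work with a canonical weakly-geometric rough lift of $\widehat{\bX}$, verify that the signature appearing in the hypothesis coincides with the signature of this lift (so that the full infinite collection of iterated integrals is really being equated), and interpret ``tree-like equivalence'' at the lifted level, where it becomes a property of the underlying trajectory rather than of the iterated integrals themselves. Once this foundational point is in place, the monotone-coordinate argument above closes the proof cleanly, without requiring any delicate height-function or length estimates on $\gamma$.
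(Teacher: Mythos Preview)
Your argument is correct and is a well-known alternative route; in fact the authors appear to have considered it before opting for the proof they give. The paper takes a more elementary path: it isolates the signature coordinates $\langle \sig(\widehat{\bX})_{0,T},\,\mathbf{0}^k\, i\, \mathbf{0}^m\rangle$ containing exactly one spatial letter, identifies each with the Young integral $\frac{1}{k!\,m!}\int_0^T s^{k}(T-s)^{m}\,d\bX^i_s$, and then runs a Weierstrass density argument (integration by parts against $\phi_k(s)=s^k(T-s)$, span of the $\phi_k'$ equals all polynomials) to conclude $\bX^i\equiv\bY^i$. What this buys them is complete self-containment: no appeal to Hambly--Lyons or its Boedihardjo--Geng--Lyons--Yang extension, and, crucially, only signature components that are unambiguously defined as Young integrals are ever used, so the argument is insensitive to the choice of geometric rough-path lift in the regime $\alpha\in(\tfrac14,\tfrac12]$ where such lifts are not unique. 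Your approach, by contrast, is more conceptual---it explains transparently \emph{why} time-augmentation forces uniqueness (monotone coordinate kills tree-like excursions)---but it outsources the hard work to a deep theorem and requires you to be careful that ``tree-reduced'' and ``tree-like equivalent'' are interpreted at the level of the group-valued lift, which you correctly flag as the main obstacle.
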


\begin{proof}
Fix a spatial index $i\in\{1,\dots,d\}$ and set $\bZ:=\bX^i-\bY^i$, a continuous $\alpha$-H\"older function with $\bZ_0=0$.
We use the family of signature coordinates of the time-augmented path that contain exactly one spatial letter. Clearly, for every $k,m\in\mathbb N\cup\{\mathbf{0}\}$,
\begin{equation} \label{eq:one-spatial-letter}
\big\langle \sig(\widehat{\bZ})_{0,T},\,\mathbf{0}^k\, i\, \mathbf{0}^m\big\rangle
=\frac{1}{k!\,m!}\int_0^T s^{\,k}(T-s)^{\,m}\,d\bZ_s ,
\end{equation}
where the integral is defined in the Young's sense. This is trivial when $\bZ$ is smooth; for general $\alpha$-H\"older $\bZ$, take smooth approximations $\bZ^n\to \bZ$ in $C^\alpha$, use the classical identity for $\bZ^n$, and pass to the limit: the map $\bZ\mapsto\int s^{k}(T-s)^m\,d\bZ$ is continuous in $C^\alpha$, and the one-spatial-letter signature coordinates are defined by the same limiting procedure.
By the hypothesis $\sig(\widehat{\bX})_{0,T}=\sig(\widehat{\bY})_{0,T}$, identity \eqref{eq:one-spatial-letter} applied to $\bZ=\bX^i-\bY^i$ yields, for all $k,m \ge 0$,
\begin{equation}\label{eq:weighted-Young-zero}
\int_0^T s^{\,k}(T-s)^{\,m}\,d\bZ_s=0 .
\end{equation}
Now choose $m=1$ and $k\ge0$, and set $\phi_k(s):=s^{\,k}(T-s)$.
Since $\phi_k\in C^1$ and $\bZ\in C^\alpha$ with $\alpha>0$, Young integration by parts gives
\[
\int_0^T \phi_k\,d\bZ
= \phi_k(T)\bZ_T-\phi_k(0)\bZ_0 - \int_0^T \bZ(s)\,\phi_k'(s)\,ds
= -\int_0^T \bZ(s)\,\phi_k'(s)\,ds,
\]
because $\phi_k(T)=0$ and $\bZ_0=0$.
By \eqref{eq:weighted-Young-zero} (with $m=1$), the left-hand side vanishes, so for every $k\ge0$,
\begin{equation}\label{eq:leb-zero}
\int_0^T \bZ(s)\,\phi_k'(s)\,ds=0.
\end{equation}
Note $\phi_k'(s)=kT\,s^{k-1}-(k+1)s^k$ is a polynomial with $\phi_0'(s)=-1$.
We claim that the linear span of $\{\phi_k'\}_{k\ge0}$ is the space of all polynomials on $[0,T]$.
Hence \eqref{eq:leb-zero} implies
\[
\int_0^T \bZ(s)\,p(s)\,ds=0,\qquad\text{for every polynomial }p.
\]
Polynomials are dense in $C([0,T])$, and the map $p \mapsto\int_0^T \bZ p$ is continuous in the sup-norm, so
\[
\int_0^T \bZ(s)\,\psi(s)\,ds=0,\qquad\text{for every } \psi \in C([0,T]).
\]
Taking $\psi=\bZ$ gives $\int_0^T |\bZ(s)|^2\,ds=0$, hence $\bZ\equiv0$ on $[0,T]$.
Since this holds for each component $i$, we conclude $\bX \equiv \bY$.
\end{proof}

Finally, we state the universal approximation theorem (UAT) based on classical signature. The main idea of UAT is to approximate the quantity of the form
$$
f\left( (\sig^p(\widehat{\bX})_t)_{t \in [0,T]} \right),
$$
for some $p$, where $f$ is a continuous function, by a linear functional on the full signature, i.e., a quantity of the form \(\innerprod{\sig(\widehat{\bX})_T, \ell}\), where $\ell \in T(\mathbb{R}^d)$. Let us state and prove the theorem.
\begin{thm}[UAT for classical signatures of time-extended \(\alpha\)-H\"older paths]\label{thm:UAT-classical}
Let \(\alpha > \frac{1}{4}\) be the H\"older regularity of the path \(\bX\), set \(p=\lfloor 1/\alpha\rfloor\).
Let \(\mathbb{G}^{p}(\mathbb R^{1+d})\) be the step-\(p\) nilpotent Lie group over the alphabet \(\cS = \{0,1,\dots,d\}\) (with \(0\) the time letter), and write \(\langle\cdot,\cdot\rangle\) for the pairing with words of length from \(\bW\).
Define
\[
\mathcal S^{(p)}:=\Big\{\, \sig^{p}(\widehat{\bX})_{t \in [0,T]}\;:\; \bX\in C^\alpha([0,T];\mathbb R^d)\,\Big\}
\subset C\!\big([0,T],\,\mathbb{G}^{(p)}(\mathbb R^{1+d})\big).
\]
Let \(\cH \subset \mathcal S^{(p)}\) be compact and \(f:H\to\mathbb R\) continuous.
Then for every \(\varepsilon>0\) there exists \(\ell\in T^{p}(\mathbb R^{1+d})\) such that
\[
\sup \limits_{(\sig^p(\widehat{\bX})_t)_{t \in [0,T]} \in \cH} \Big|\,f\left((\sig^p(\widehat{\bX})_t(_{t \in [0,T]}\right)-\innerprod{\sig(\widehat{\bX})_T, \ell} \,\Big| <\varepsilon .
\]
\end{thm}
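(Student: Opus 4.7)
The plan is to reduce the claim to an application of the Stone--Weierstrass theorem on the compact space $\cH$. I would first introduce the candidate algebra of linear functionals on the terminal signature,
\[
\cA := \Big\{ \cH \ni \sigma \mapsto \innerprod{\sig(\widehat{\bX})_T,\,\ell} \;:\; \ell \in T(\RR^{1+d}) \Big\},
\]
where for each $\sigma = (\sig^{p}(\widehat{\bX})_t)_{t\in[0,T]} \in \cH$, the underlying path $\bX$ is recovered from the level-$1$ component (namely $(t,\bX_t)$, using $\bX_0 = 0$), and $\sig(\widehat{\bX})_T$ denotes the full terminal signature obtained from the level-$p$ rough path via Lyons' extension theorem. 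I would read the $T^{p}$ in the statement as shorthand for $T(\RR^{1+d})$: the Stone--Weierstrass closure argument naturally produces a finite-support $\ell$ whose truncation level may exceed $p$, and restricting $\ell$ to $T^{p}$ would preclude approximating any functional that is not polynomial in the terminal level-$p$ signature.

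I would then verify the four Stone--Weierstrass hypotheses for $\cA \subset C(\cH, \RR)$. \emph{Algebra structure:} closure under sums and scalar multiples is trivial, while closure under pointwise products is the shuffle property, $\innerprod{\sig(\widehat{\bX})_T,\ell_1}\,\innerprod{\sig(\widehat{\bX})_T,\ell_2} = \innerprod{\sig(\widehat{\bX})_T,\ell_1 \shuffle \ell_2}$. \emph{Constants:} the empty word yields the constant function $1$. \emph{Point separation:} if $\sigma^{(1)} \neq \sigma^{(2)}$ in $\cH$, then their level-$1$ components already differ, so $\bX^{(1)} \neq \bX^{(2)}$ as paths; by the contrapositive of Lemma \ref{sig_uiqueness}, the full terminal signatures differ, so some word $\bw$ gives distinct pairings. \emph{Continuity:} for each fixed $\ell$, the map $\sigma \mapsto \innerprod{\sig(\widehat{\bX})_T,\ell}$ is continuous because the projection to level $1$ is continuous in the ambient topology on $C([0,T], \mathbb G^{(p)}(\RR^{1+d}))$, and Lyons' continuity theorem then guarantees continuity of the extension from the level-$p$ rough path to its full signature. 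Stone--Weierstrass then yields density of $\cA$ in $C(\cH, \RR)$ in the sup-norm, which is exactly the desired statement.

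The main obstacle I expect is making the continuity step fully rigorous, which is sensitive to the topology placed on $\cH$. For $\alpha > 1/2$ the signature is built via Young integration and continuity is immediate in uniform norm on the path. For $\alpha \in (1/4, 1/2]$, one must invoke the rough path machinery: the Lyons extension map $\mathbb G^{(p)} \to \mathbb G^{(N)}$ is continuous in the inhomogeneous rough path metric for every $N \ge p$, and one must check compatibility between this metric and the uniform topology on $C([0,T], \mathbb G^{(p)}(\RR^{1+d}))$ in which $\cH$ is assumed compact. Once this compatibility is in hand, the Stone--Weierstrass conclusion follows without further difficulty.
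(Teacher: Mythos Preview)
Your approach is essentially identical to the paper's: both verify the Stone--Weierstrass hypotheses for the algebra of terminal-signature functionals (unitality via the empty word, closure under products via the shuffle identity, and point separation via the uniqueness Lemma~\ref{sig_uiqueness} together with Lyons' extension). You are in fact more careful than the paper, which neither addresses continuity of $\sigma \mapsto \langle\sig(\widehat{\bX})_T,\ell\rangle$ nor flags the $T^{p}$ versus $T(\RR^{1+d})$ discrepancy you correctly note.
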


\begin{proof}
Consider the set
\[
\mathcal G:= \mathrm{span}\Big\{\innerprod{\sig(\widehat{\bX})_T, \bw}, \bw \text{ is a word from } \bW \Big\}.
\]
Then \(\mathcal G\) is a unital subalgebra i.e., the empty word gives the constant \(1\), and for words \(\bu, \bv\) the shuffle identity yields
\[
\innerprod{\sig(\widehat{\bX})_T, \bu}\innerprod{\sig(\widehat{\bX})_T, \bv} = \innerprod{\sig(\widehat{\bX})_T, \bu \shuffle \bv} \in \mathcal G.
\]
Also, \(\mathcal G\) vanishes nowhere because \(\innerprod{\sig(\widehat{\bX})_T, \emptyset} \equiv 1\). Finally, \(\mathcal G\) separates points i.e., for any two \(\alpha\)-H\"older paths \(\widehat{\bX}\) and \(\widehat{\bY}\) with \(\widehat{\bX} \ne \widehat{\bY}\) implies \(\innerprod{\sig(\widehat{\bX})_T, \bw} \ne \innerprod{\sig(\widehat{\bY})_T, \bw}\) for any word \(\bw \in \bW\). On contrary, suppose \(\innerprod{\sig(\widehat{\bX})_T, \bw} = \innerprod{\sig(\widehat{\bY})_T, \bw}\) then by uniqueness of Lyon's lift [Theorem 9.5 (i) \cite{Friz_Victoir_2010}] \(\innerprod{\sig^p(\widehat{\bX})_t, \bw} = \innerprod{\sig^p(\widehat{\bY})_t, \bw}\) for any \(t \in [0,T]\), because paths of H\"older regularity \(\alpha\) have bounded \(p\)-variation for \(p > \lfloor \frac{1}{\alpha}\rfloor\). Furthermore, if \(\innerprod{\sig(\widehat{\bX})_T, \bw} = \innerprod{\sig(\widehat{\bY})_T, \bw}\) then \(\widehat{\bX}_t = \widehat{\bY}_t\) for any \(t \in [0,T]\) by the uniqueness of the signature from Lemma \ref{sig_uiqueness}, which is a contradiction to original claim. Therefore \(\mathcal G\) separates points. Hence, the claim follows by Stone-Weierstrass theorem.
\end{proof}

The next subsection provides a brief formal introduction to geometric rough paths. We keep this discussion very brief and then proceed to define branched rough paths in the forthcoming subsection, explore their various properties, introduce the corresponding algebraic structure, and present illustrative examples.

\subsection{Geometric Rough Path}
Having introduced some basic notions related to the path $\bX$ and its signature, which is an infinite dimensional object comprising the iterated integrals of the path $\bX$ in an increasing order, it's time to introduce geometric rough path as given by \cite{hairer2015geometric}. Theoretically, a rough path is an infinite dimensional object. But, in practice, only finitely many components of rough path actually matter. Let $\bX$ be an $\alpha-$H\"older path and let $N$ be the largest integer such that $N\alpha \le 1$, then the components of the path with degree $n > N$ can be determined by those of degree $n \le N$ [see e.g. \cite{Friz_Victoir_2010} Theorem 9.5]. With this, we formally define a geometric rough path as follows.
\begin{defn}
      A map $\sig^{N}(\bX) : [0, T] \times [0, T] \to T(\mathbb{R}^d)$ of regularity $\alpha$ is said to be a (weakly-) geometric rough path (GRP) if it satisfies:
      \begin{enumerate}
          \item $\langle \sig^{N}(\bX)_{st}, \bw \shuffle \bw' \rangle = \langle \sig^{N}(\bX)_{st}, \bw \rangle \langle \sig^{N}(\bX)_{st}, \bw' \rangle $, for each \(\bw, \bw' \in \bW\),
          \item $\sig^{N}(\bX)_{st} = \sig^{N}(\bX)_{su} \otimes \sig^{N}(\bX)_{ut}$,
          \item $\sup\limits_{s \neq t} \frac{\langle \sig^{N}(\bX)_{st}, \bw \rangle}{|t - s|^{\alpha |\bw|}} < \infty$, for every $\bw \in \bW$ with $|\bw| \leq N$.
      \end{enumerate}
\end{defn}

\begin{rem}
We adopt the same notation for signature and the geometric rough path that is $\sig^N(\bX)$ to talk about rough path and signature interchangeably as up to level \(N\) there is no difference in rough path and signature.
\end{rem}

\begin{rem}
The geometric rough path lives in the Lie group $(\mathbb{G}(\mathbb{R}^d), \otimes))$, which is called the free nilpotent group with the tensor product being the group multiplication. This free nilpotent group $\mathbb{G}(\mathbb{R}^d)$ is defined as
$$\mathbb{G}(\mathbb{R}^d) := \exp \left(\mathfrak{g}(\mathbb{R}^d)\right),$$
where $\mathfrak{g}(\mathbb{R}^d) \subset T(\mathbb{R}^d)$ is the formal Lie series of $\mathbb{R}^d$.
\end{rem}

As an illustration, for one-dimensional Brownian motion $\bB$, the Stratonovich lift gives rough path
\[
\sig^2(\bB)_{st} = \left(1, \int_s^t \circ d\bB_r, \int_s^t \int_s^v \circ d\bB_u \circ d\bB_v\right).
\]
By integration by parts,
\[
\innerprod{\sig^2(\bB)_{st}, \bw_1}^2 
= 2\int_s^t \bB_r \circ d\bB_r 
= 2\innerprod{\sig^2(\bB)_{st}, \bw_1 \bw_1}
= \innerprod{\sig^2(\bB)_{st}, \bw_1 \shuffle \bw_1},
\]
where \(\bw_1 = 1\), so the shuffle property holds.  
In contrast, for the It\^o lift
\[
\sig(\bB)_{st} = \left(1, \int_s^t d\bB_r, \int_s^t \int_s^v d\bB_u\, d\bB_v\right),
\]
and It\^o’s formula gives
\[
\innerprod{\sig^2(\bB)_{st}, \bw_1}^2
= 2\int_s^t \bB_r\, d\bB_r + (t-s) 
= 2\innerprod{\sig^2(\bB)_{st}, \bw_1  \bw_1} + (t-s),
\]
which differs from $\innerprod{\sig^2(\bB)_{st}, \bw_1 \shuffle \bw_1}$. Hence, the Stratonovich integral yields a geometric rough path, whereas the It\^o integral does not.

\subsection{Branched Rough Path}

Geometric rough path, though not encompassing Brownian motion with It\^o integrals, enjoys many good properties and has many applications in finance, machine learning and data science. And these all applications are due to universal approximation theorem (UAT) which can equivalently be stated that any continuous function of the path or signature can be approximated well by linear combination of the components of the signature that is
\[f (\sig(\bX)_{st}) \approx \sum_{\bw \in \bW} a_{\bw} \innerprod{\sig(\bX)_{st}, \bw},\]
where \(\bw \in \RR\). For the function \(f(x) = x^2\) and an \(\RR^d\)-valued path \(\bX\)

\[f(\innerprod{\sig(\bX)_{st}, w_i}) = \innerprod{\sig(\bX)_{st}, w_i}^2 = \innerprod{\sig(\bX)_{st}, w_i w_i} + \innerprod{\sig(\bX)_{st}, w_i w_i}, \text{ where } w_i \in \{1,\dots,d\}.\]

Similarly,
\[\int_s^t \innerprod{\sig(\bX)_{su}, w_i}\innerprod{\sig(\bX)_{su}, w_j} d\bX^{w_k}_u =  \innerprod{\sig(\bX)_{su}, \bw} + \innerprod{\sig(\bX)_{st}, \bw'}, \]
where \(\bw = w_i w_j w_k\) and \(\bw = w_j w_i w_k\).

On the other hand, if the rough path is not geometric then we do not have such an equality. Also, we know that the geometric rough path lives in the nilpotent Lie group which is associated with the tensor algebra over $\mathbb{R}^d$. While non-geometric (branched) rough path lives in a more general Lie group induced by the Connes-Kreimer Hopf algebra of rooted trees. To discuss this in detail, let us encode the components of the rough path into something different than classical tensor algebra i.e. rooted tree structure. For example the following integral is encoded as 

\[\int_s^t \left(\int_s^u d\bX^{i}_r\right) \left(\int_s^u d\bX^{j}_r\right) d\bX^{k}_u =: \innerprod{\bsig^N(\bX)_{st}, \Forest{decorated[k[i][j]]}},\]
where instead of words we will be using the trees to define the components of the signature/rough path and the vertices of the trees will be decorated from the alphabet set \(\cS = \{1,2,\dots,d\}\). In general, for a rooted tree \(h\) 
\[\innerprod{\bsig^N(\bX)_{st}, h} = \int_s^t \innerprod{\bsig^N(\bX)_{su}, h'} d\bX^{\br}_u, \]
where \(\br\) is the root of tree \(h\) and \(h'\) is the tree that we get after removing root from \(h\).

\begin{rem}
The adoption of the notation \(\bsig^N(\bX)\) instead of \(\sig^N(\bX)\) is to differentiate between geometric and branched rough path and signature.
\end{rem}

The tree structure gives rise to a space called the Connes-Kreimer Hopf algebra of rooted trees $\mathcal{T}$ which is a Hopf algebra of labelled, rooted trees with labels coming from the set $\{1, \cdots, d\}$. This special Hopf algbra is used in \cite{connes1999hopf} in the context of renormalization theory. To be precise, a Hopf algebra is a vector space equipped with a product 
$$\cdot : \hopf \hat{\otimes} \hopf \to \hopf,$$ and a coproduct 
$$\Delta : \hopf \to \hopf \hat{\otimes} \hopf.$$
 This product is the usual commutative product of polynomial where each tree in $\mathcal{T}$ is considered a monomial. The coproduct $\Delta$ is the dual of the convolution product $\star$ which is nothing but all the ways to cut apart a tree like the deconcatenation coproduct of tensors as given by \cite{manchon2008hopf}. A detailed introduction to Hopf algebra and the corresponding properties will be given later.\\
Let's define precisely what a branched rough path is as a reiteration of the definition given by \cite{gubinelli2010ramification}. 
\begin{defn}
    An \(\alpha-\)H\"older map \(\bsig^N(\bX) : [0, T] \times [0, T] \to \hopf^*\)(the graded dual of \(\hopf\))  is said to be a branched rough path if it satisfies the following three properties:
\begin{enumerate}
    \item \(\langle  \bsig^N(\bX)_{st}, h_1 h_2 \rangle = \langle  \bsig^N(\bX)_{st}, h_1 \rangle \langle \bsig^N(\bX)_{st}, h_2 \rangle\), for every \(h_1, h_2 \in \hopf\).
    \item \(\bsig^N(\bX)_{st} = \bsig^N(\bX)_{su} \star \bsig^N(\bX)_{ut}\) or equivalently
     \(\langle \bsig^N(\bX)_{st}, h \rangle = \sum\limits_{(h)} \langle \bsig^N(\bX)_{su}, h^{(1)} \rangle \langle \bsig^N(\bX)_{ut}, h^{(2)} \rangle\),
    where \(\Delta h = \sum\limits_{(h)} h^{(1)} \hat{\otimes} h^{(2)}\) and \(h \in \hopf\).
    \item \(\sup\limits_{s \neq t} \frac{\langle \bsig^N(\bX)_{st}, h \rangle}{|t - s|^{\alpha |h|}} < \infty\), for every \(h \in \hopf\), where \(|h|\) is the degree of \(h\).
\end{enumerate}
\end{defn}
We first recall the definition of a Hopf algebra and then specialize to the Connes–Kreimer Hopf algebra of rooted trees, which is the structure we need for branched rough paths.

\subsection{Hopf Algebra}
Since a Hopf algebra is a special case of a bialgebra, we begin by defining bialgebras. Consider vector spaces \(\hopf\) and \(\hopf^*\) with units \(\mathbf{1}\) and \(\mathbf{1}^*\), products \(\cdot : \hopf \hat{\otimes} \hopf \to \hopf \) and \(\star: \hopf^* \hat{\otimes} \hopf^* \to \hopf^* \) respectively. \(\hopf^*\) is considered as the dual space of \(\hopf\) where the action of functional is defined as \(\innerprod{\cdot,\cdot}: \hopf^* \hat{\otimes} \hopf \to \mathbb{R}\). The structure of \(\hopf^*\) can be superimposed to that of \(\hopf\) using the coproduct \(\Delta\) defined as  
\[ \innerprod{f \star g, h} = \innerprod{f \; \hat{\otimes}\; g, \Delta h},\]
where $f,g \in \hopf^*$, $h \in \hopf$, and $\Delta h = \sum_{(h)} h^{(1)} \; \hat{\otimes } \; h^{(2)}$. We will discuss more about this coproduct later. The definition of bialgbera is given as follows
\begin{defn}
The triple $\left(\hopf, \cdot, \Delta \right)$ is called a bialgebra if it satisfies the following compatibility conditions:
\begin{enumerate}
\item The coproduct $\Delta: \hopf \to \hopf \hat{\otimes} \hopf$ is an algebra homomorphism i.e., \(\Delta(h \cdot k) = \Delta(h) \cdot \Delta(k),  \text{ for all } h, k \in \hopf\) and \(\Delta(\mathbf{1}) = \mathbf{1} \otimes \mathbf{1}\).
\item The counit $\mathbf{1}^*: \hopf \to \RR$ is an algebra homomorphism i.e., \(\mathbf{1}^*(h \cdot k) = \mathbf{1}^*(h) \mathbf{1}^*(k)  \text{ for all } h, k \in \hopf\) and \( \mathbf{1}^*(\mathbf{1}) = 1\).
\end{enumerate}
\end{defn}

Moreover, the map \(\cA^* :  \hopf^* \to \hopf^*\) defined by \(f \star \cA^* f  = \cA^* f \star f = \mathbf{1}\), for \(f \in \hopf^*\) is called the inverse map. The adjoint of this map, \(\cA :  \hopf \to \hopf\) is called the antipode which satisfies the following relation
\begin{equation}
(\Id \; \hat{\otimes} \;\mathcal{A}) \Delta h = (\mathcal{A} \; \hat{\otimes} \; \Id ) \Delta h = \innerprod{\mathbf{1}^*,h}\mathbf{1},
\end{equation}
for $h \in \hopf$ and $\Id: \hopf \to \hopf$ is the identity map. 
\begin{defn}
The quadruple $\left( \hopf, \cdot, \Delta, \mathcal{A}\right)$ is called a Hopf algebra.
\end{defn}
Furthermore, a graded bialgebra is the one that can be decomposed into the direct sum of vector spaces, i.e.: 
$$\hopf = \bigoplus_{n \in \mathbb{N}} \hopf_{(n)}.$$ 
We introduce this grading to recall a fact: any graded bialgebra $\hopf$ with $\hopf_0 = \mathbb{R}$ is automatically a Hopf algebra. Moreover, every Hopf algbera has a unique antipode. For detailed discussions and examples on Hopf algebra, we refer to  \cite{abe2004hopf} and \cite{brouder2004trees}.
Next, we will discuss an special example of Hopf algebra, which is the Connes-Kreimer Hopf Algebra of rooted trees.
\subsubsection{The Connes-Kreimer Hopf Algebra of rooted trees}
The Connes-Kreimer Hopf Algebra is an special example of Hopf algebra that plays a key role in the theory of branched rough paths. It will serve as our primary algebraic framework in this context. Let us define some notations and discuss main properties of this key algebraic structure.

Let the set of all rooted trees (forests) with finite vertices be denoted by $\trees \; (\mathcal{F})$ and that with vertices up to $n$ be denoted by $\trees_n \; (\mathcal{F}_n)$. For example \(\trees_1 = \{\Forest{decorated[]} \}\;, \trees_2 = \{\Forest{decorated[]}, \Forest{decorated[[]]} \}\;, \trees_3 = \{\Forest{decorated[]}, \Forest{decorated[[]]}, \Forest{decorated[[[]]]}, \Forest{decorated[[][]]}\}\; \; \text{etc.}\)
All the trees above are undecorated but can be labeled with letters from some alphabet $\mathcal{S} = \{1,2, \cdots, d\}$.
The recursive construction of the trees is shown as follows
\begin{equation*}
    [\mathbf{1}]_i = \Forest{decorated[i]}, \quad [\Forest{decorated[i]}]_j = \Forest{decorated[j[i]]}, \quad [\Forest{decorated[j[i]]}]_k = \Forest{decorated[k[j[i]]]}, \quad [\Forest{decorated[i]}\Forest{decorated[j]}]_k = \Forest{decorated[k[i][j]]]}, \text{ etc.}
\end{equation*}
Here $\mathbf{1}$ refers to the empty tree. Indeed, every tree in $\mathcal{T}$ can be constructed recursively as \([h_1 h_2, \cdots h_n]_r,\)
for $h_1,  h_2, \cdots h_n \in \mathcal{T} \cup \mathbf{1}$. Furthermore, we will assume that the order of the branches in a tree does not matter i.e., $[h_1 h_2, \cdots h_n]_r = [h_{\sigma(1)} h_{\sigma(2)}, \cdots h_{\sigma(n)}]_r$ for any permutation $\sigma$.

In the case of rooted trees, the Connes-Kreimer Hopf algebra $(\hopf, \cdot, \Delta, \mathcal{S})$ is simply the commutative polynomial algebra generated by the variables coming from the set $\mathcal{T}$. It is equipped with an antipode $\mathcal{A}: \hopf \to \hopf$ and a coproduct $\Delta: \hopf \to \hopf \hat{\otimes} \hopf$. An example of an element of $\hopf$ is \(\Forest{decorated[k[j]]} - 5  \; \Forest{decorated[j]} \; \Forest{decorated[k[j[i]]]} -\frac{\sqrt{3}}{2} \Forest{decorated[i[k][j]]}.\)
The coproduct $\Delta$ can be recursively defined as $\Delta \mathbf{1} = \mathbf{1} \hat{\otimes} \mathbf{1} $ and
\begin{equation}\label{coproduct h}
    \Delta [h_1, \cdots, h_n]_r = [h_1, \cdots, h_n]_r \hat{\otimes} \mathbf{1} + \left(\Id \hat{\otimes} [\; \cdot \; ]_r\right)\Delta (h_1, \cdots, h_n).
\end{equation}
This coproduct is a morphism with respect to polynomial multiplication i.e., \(\Delta (h_1, \cdots, h_n) = \Delta h_1 \cdots \Delta h_n,\)
and is coassociative i.e., \((\Delta \hat{\otimes} \Id) \Delta  = (\Id \hat{\otimes} \Delta) \Delta.\)
Moreover, the antipode $\mathcal{A}$ satisfies \(P((\mathcal{A} \hat{\otimes} \Id) \Delta h) = P((\Id \hat{\otimes} \mathcal{A}) \Delta h) = \innerprod{\mathbf{1}^*, h} \mathbf{1},\) for any \(h \in \hopf\). Here $P$ is product map i.e., $P(h_1 \hat{\otimes} h_2) = h_1h_2$.

\begin{rem}
    If $\bX$ is a path of bounded variation in $\mathbb{R}^d$ then for a tree $\tau = [h_1, \cdots, h_n]_a$ in $\mathcal{T}$ we can write  $\innerprod{\bsig^N(\bX)_{st}, \tau}$ as
    \begin{equation*}
        \innerprod{\bsig^N(\bX)_{st}, \tau} = \int_s^t \innerprod{\bsig^N(\bX)_{sr}, h_1, \cdots, h_n} d \bX_r^{\Forest{decorated[a]}}.
    \end{equation*}
\end{rem}
\begin{rem}[Analogue of Chen's identity for branched rough paths] For the branched rough paths, the corresponding Chen's identity is defined thorough the coproduct. For $h=\Forest{decorated[a]}$, we have \(\int_s^t d \bX_r^{\Forest{decorated[a]}} = \int_s^u d \bX_r^{\Forest{decorated[a]}} + \int_u^t d \bX_r^{\Forest{decorated[a]}}\),  for any \(0 \le s \le u \le t \le T\). For a general tree $\tau = [h_1, \cdots, h_n]_a$, according to \eqref{coproduct h} and induction, we have
\begin{align*}
        \innerprod{\bsig^N(\bX)_{st}, \tau} & = \innerprod{\bsig^N(\bX)_{su}, \tau} + \int_u^t \innerprod{\bsig^N(\bX)_{sr}, h_1, \cdots, h_n} d\bX_r^{\Forest{decorated[a]}}\\
        & = \innerprod{\bsig^N(\bX)_{su}, \tau} + \int_u^t \innerprod{\bsig^N(\bX)_{su} \hat{\otimes} \bsig^N(\bX)_{ur}, \Delta ( h_1, \cdots, h_n)} d\bX_r^{\Forest{decorated[a]}}\\
        & = \innerprod{\bsig^N(\bX)_{su}, \tau} + \innerprod{\bsig^N(\bX)_{su} \hat{\otimes} \bsig^N(\bX)_{ut}, (\Id \hat{\otimes} [\; \cdot \; ]_a) \Delta ( h_1, \cdots, h_n)}\\
        & = \innerprod{\bsig^N(\bX)_{su} \hat{\otimes} \bsig^N(\bX)_{ut}, \tau \; \hat{\otimes} \; 1 +  (\Id \hat{\otimes} [\; \cdot \; ]_a) \Delta ( h_1, \cdots, h_n)}\\
        & = \innerprod{\bsig^N(\bX)_{su} \hat{\otimes} \bsig^N(\bX)_{ut}, \Delta(\tau)}.
\end{align*}
\end{rem}

\subsection{Geometric Realization of Branched Rough Paths}
While branched rough paths possess a more intricate algebraic structure than geometric rough paths, a bridge between them can be established by employing the extension map of \cite{hairer2015geometric} to construct an extended geometric rough path lying above a given branched rough path. In particular, for every branched rough path $\bsig^N(\bX)$ above a path $\bX$, there exists a geometric rough path $\sig^N(\Bar{\bX})$ above an extended path $\Bar{\bX}$ such that $\Bar{\bX}$ is an extension of $\bX$ and $\sig^N(\Bar{\bX})$ contains the information of $\bsig^N(\bX)$.

\begin{center}
\begin{tikzcd}[column sep=4cm, row sep=2cm]
\bX \arrow[r, "\text{Branched Rough path lift}"] \arrow[d, swap, "\text{extended to}"] & \bsig^N(\bX) \arrow[d, "\innerprod{\bsig^N(\bX),  \tau \color{black}} = \innerprod{\sig^N(\Bar{\bX}),  \Psi(\tau) \color{black}}  "] \\
\Bar{\bX} \arrow[r, "\text{Geometric Rough path lift}"] & \sig^N(\Bar{\bX})
\end{tikzcd}
\end{center}

In what follows, we provide a self-contained introduction to the map $\Psi$ as shown in the above diagram. We begin by defining several key notions. Let \(\mathcal{T}\) denote the set of rooted trees, and let \(\mathcal{T}_n\) be the set of rooted trees with at most \(n\) vertices. We define \(\mathcal{V}\) as the real vector space spanned by \(\mathcal{T}\), and \(\mathcal{V}_n\) as the real vector space spanned by \(\mathcal{T}_n\). The tensor algebra generated by \(\mathcal{V}\) is denoted by \(T(\mathcal{V}) := \bigoplus_{i=0}^{\infty} \mathcal{V}^{\otimes i}\), while the tensor algebra generated by \(\mathcal{V}_n\) is denoted by \(T(\mathcal{V}_n) := \bigoplus_{i=0}^{\infty} \mathcal{V}_n^{\otimes i}\). Similarly, the truncated tensor algebra of order \(N\) generated by \(\mathcal{V}\) is written as \(T^{(N)}(\mathcal{V}) := \bigoplus_{i=0}^{N} \mathcal{V}^{\otimes i}\), and the corresponding truncated tensor algebra generated by \(\mathcal{V}_n\) is \(T^{(N)}(\mathcal{V}_n) := \bigoplus_{i=0}^{N} \mathcal{V}_n^{\otimes i}\).

Clearly, $\bX$ lives in the space $\mathcal{V}_1$, where \(\mathcal{V}_1 :=\text{span}  \{\Forest{decorated[a]}: a=1,\cdots,d\} \cong \mathbb{R}^d\). While $\Bar{\bX}$ lives in $\mathcal{V}_N$ such that \(\text{Proj}_{\mathcal{V}_1} \Bar{\bX} = \bX\). Moreover,  $\sig^N(\Bar{\bX})$ lives in the truncated tensor product space $T^{(N)}(\mathcal{V}_N)$ also defined as
\begin{equation*}
    T^{(N)}(\mathcal{V}_N) = \text{span}\{h_1\otimes \cdots  \otimes h_n:h_i \in \mathcal{T}_N, 1 \le n \le N \},
\end{equation*}
such that $\innerprod{\sig^N(\Bar{\bX})_{st}, h} = \Bar{\bX}^h_t - \Bar{\bX}^h_s$ and the tensor components are understood as iterated integrals
\begin{equation*}
    \innerprod{\sig^N(\Bar{\bX})_{st}, h_1\otimes \cdots  \otimes h_n} = \int_{s}^{t} \cdots \int_{s}^{r_2}d\Bar{\bX}_{r_1}^{h_1} \cdots d\Bar{\bX}_{r_n}^{h_n}.
\end{equation*}
The connection between geometric and branched rough paths is built by using a morphism $\Psi: (\hopf, \cdot, \Delta) \to (T(\mathcal{V}), \shuffle, \Bar{\Delta})$. The \emph{existence} of such a morphism guarantees the existence of the extended path and geometric rough path over it. It is defined as
\begin{equation*}
    \Psi(h) = h + \Psi_{n-1}(h),  \text{ and } \Psi(h_1 h_2) = \Psi(h_1) \shuffle \Psi(h_2) \quad \forall \; h, h_1, h_2 \in \mathcal{F}_n, 
\end{equation*}
where $(\hopf, \cdot, \Delta)$ is the Hopf algebra of rooted trees, $(T(\mathcal{V}), \shuffle, \Bar{\Delta})$ is the shuffle Hopf algebra on $T(\mathcal{V})$. Also, $\Psi_{n-1}(h)$ is the projection of $\Psi$ onto $T(\mathcal{V}_{n-1})$ and is all the ways to cut apart $h$. The map $\Psi$ is equivalently defined as \(\Psi(h) = \left(\Psi \hat{\otimes}(\Id - \mathbf{1}^*)\right)\Delta(h)\) in \cite{bruned2022renormalisation}, where $\mathbf{1}^*$ is the co-unit. Also, an equivalent definition of the map is given in \cite{tapia2020geometry} i.e., \(\Psi(h) = h + \left(\Psi \hat{\otimes} \Id\right)\Delta'h\), where $\Delta'h$ is the reduced coproduct.

\begin{example}
One of the above constructions can be used to determine \(\Psi(h)\) for any rooted tree \(h\). For \(h = \Forest{decorated[a]}\), \(\Psi (\Forest{decorated[a]}) = \Forest{decorated[a]}\), for \(h = \Forest{decorated[b[a]]}\), \(\Psi (\Forest{decorated[b[a]]}) = \Forest{decorated[b[a]]} + \Forest{decorated[a]} \otimes \Forest{decorated[b]}\), and for \(h = \Forest{decorated[c[b][a]]}\), \( \Psi (\Forest{decorated[c[b][a]]}) = \Forest{decorated[c[b][a]]} + \Forest{decorated[a]} \otimes \Forest{decorated[c[b]]} + \Forest{decorated[b]} \otimes \Forest{decorated[c[a]]} + \Forest{decorated[b]} \otimes \Forest{decorated[a]} \otimes \Forest{decorated[c]} + \Forest{decorated[a]} \otimes \Forest{decorated[b]} \otimes \Forest{decorated[c]}\).
\end{example}

We now state and prove the following lemma related to the existence of this morphism. 
\begin{lem}
    \cite{hairer2015geometric} There exists a graded morphism of Hopf algebras $\Psi: (\hopf, \cdot, \Delta) \to (T(\mathcal{V}), \shuffle, \Bar{\Delta})$ defined as
\begin{equation*}
    \Psi(h) = h + \Psi_{n-1}(h), \quad \forall \; h \in \mathcal{T}_n,
\end{equation*}
such that $\left(\Psi \hat{\otimes}\Psi\right)\Delta h = \Bar{\Delta}\Psi(h)$.
\end{lem}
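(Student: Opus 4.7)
The plan is to construct $\Psi$ by induction on the grading of $\hopf$ by number of vertices, verifying at each step that $\Psi$ preserves both the product and the coproduct; since $\hopf$ is a graded connected bialgebra, compatibility with the antipode is automatic once the bialgebra morphism properties hold. For the base case, set $\Psi(\mathbf{1}) := \mathbf{1}$ and $\Psi(\Forest{decorated[a]}) := \Forest{decorated[a]}$ for every label $a \in \cS$, extended to single-vertex forests by $\Psi(h_1 h_2) := \Psi(h_1) \shuffle \Psi(h_2)$. The identity $\Bar{\Delta}\,\Forest{decorated[a]} = \Forest{decorated[a]} \hat{\otimes} \mathbf{1} + \mathbf{1} \hat{\otimes} \Forest{decorated[a]} = \Delta\,\Forest{decorated[a]}$ settles the coalgebra property in degree $\le 1$.

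For the inductive step, suppose $\Psi$ is defined on trees of vertex count $< n$ and on the (shuffle-extended) subalgebra they generate, and that it satisfies $(\Psi \hat{\otimes} \Psi)\Delta = \Bar{\Delta}\Psi$ there. For a tree $h$ of vertex count $n$, the recursive Connes--Kreimer coproduct formula yields $\Delta h = h \hat{\otimes} \mathbf{1} + \mathbf{1} \hat{\otimes} h + \Delta' h$, where each summand $h^{(1)} \hat{\otimes} h^{(2)}$ of $\Delta' h$ has $h^{(1)}$ a forest and $h^{(2)}$ a single tree (namely $[\,\cdot\,]_r$ applied to a subforest, with $r$ the root of $h$), both of vertex count $< n$. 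Define
\[
\Psi(h) \;:=\; h + (\Psi \hat{\otimes} \mathrm{Id}_{\mathcal{V}})\Delta' h,
\]
identifying $T(\mathcal{V}) \hat{\otimes} \mathcal{V}$ with its image in $T(\mathcal{V})$ under concatenation $u \hat{\otimes} v \mapsto u \otimes v$. This is manifestly graded of degree $n$, and $\Psi_{n-1}(h) := (\Psi \hat{\otimes} \mathrm{Id}_{\mathcal{V}})\Delta' h$ has total tensor length $\ge 2$, hence lies in $T(\mathcal{V}_{n-1})$ as required by the statement. Extend to degree-$n$ forests by $\Psi(h_1 h_2) := \Psi(h_1) \shuffle \Psi(h_2)$.

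The principal obstacle is then verifying $\Bar{\Delta}\Psi(h) = (\Psi \hat{\otimes} \Psi)\Delta h$ for the newly defined $\Psi(h)$. One applies $\Bar{\Delta}$ to the defining formula; using the deconcatenation rule on concatenations $u \otimes v$ in $T(\mathcal{V})$, the splittings fall into three families: the boundary pair $\Psi(h) \hat{\otimes} \mathbf{1} + \mathbf{1} \hat{\otimes} \Psi(h)$, a middle cut $\sum \Psi(h^{(1)}) \hat{\otimes} h^{(2)}$, and internal cuts of the factor $\Psi(h^{(1)})$. The inductive hypothesis $\Bar{\Delta}\Psi(h^{(1)}) = (\Psi \hat{\otimes} \Psi)\Delta h^{(1)}$ reexpresses the internal cuts as a sum indexed by $(\Delta \hat{\otimes} \mathrm{Id})\Delta' h$, and the coassociativity $(\Delta \hat{\otimes} \mathrm{Id})\Delta = (\mathrm{Id} \hat{\otimes} \Delta)\Delta$ on $\hopf$ then reorganizes all contributions precisely into $(\Psi \hat{\otimes} \Psi)\Delta h$. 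Finally, compatibility with the multiplicative extension across forests follows from the shuffle Hopf algebra bialgebra relation $\Bar{\Delta}(u \shuffle v) = \Bar{\Delta} u \shuffle \Bar{\Delta} v$ combined with the multiplicativity $\Delta(h_1 h_2) = \Delta h_1 \cdot \Delta h_2$ on $\hopf$, which closes the induction.
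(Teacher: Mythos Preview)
Your proposal is correct and follows essentially the same route as the paper: induction on the number of vertices, with the inductive definition $\Psi(h)=h+(\Psi\hat\otimes\mathrm{Id})\Delta' h$, followed by applying $\Bar\Delta$, invoking the deconcatenation rule, the inductive hypothesis $\Bar\Delta\Psi(h^{(1)})=(\Psi\hat\otimes\Psi)\Delta h^{(1)}$, and coassociativity of the (reduced) coproduct to reorganize the terms into $(\Psi\hat\otimes\Psi)\Delta h$. The paper simply writes out this computation line by line in Sweedler notation, whereas you describe the same steps in prose; you also make explicit the multiplicative extension to forests and the automatic antipode compatibility, points the paper leaves implicit.
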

\begin{proof}
    For $n=1$ we have $\Psi(\Forest{decorated[a]}) = \Forest{decorated[a]}$ which trivially satisfies $\left(\Psi \hat{\otimes}\Psi\right)\Delta h = \Bar{\Delta}\Psi(h)$.
    Assume the morphism is true for $h$ with $|h|=n-1$. Now, let us prove the claim for $h$ with $|h|=n$.
    We have
    \begin{align*}
        \Bar{\Delta} \Psi(h) & = \Bar{\Delta}\left( \Psi(h_1) \otimes h_2 + h\right)\\
        & = \Bar{\Delta}\left( \Psi(h_1) \otimes h_2\right) + h \hat{\otimes} 1 + 1 \hat{\otimes} h\\
        & = \left( \Psi(h_1) \otimes h_2\right) \Hat{\otimes} 1 + (\Bar{\Delta}\Psi(h_1))\otimes (1 \hat{\otimes} h_2) + h \hat{\otimes} 1 + 1 \hat{\otimes} h\\
        & = \left( \Psi(h_1) \otimes h_2 + h\right) \Hat{\otimes} 1 + (\Psi \hat{\otimes}\Psi)(\Delta h_1)\otimes (1 \hat{\otimes} h_2) +1 \hat{\otimes} h\\
        & = \Psi(h) \Hat{\otimes} 1 + \Psi(h_1)\hat{\otimes}h_2 + 1 \hat{\otimes} (\Psi(h_1) \otimes h_2) + \Psi(h_{11}) \hat{\otimes} (\Psi(h_{12}) \otimes h_2)+1 \hat{\otimes} h\\
        & = \Psi(h) \Hat{\otimes} 1 + 1 \Hat{\otimes} \Psi(h) + \Psi(h_1)\hat{\otimes}h_2 + \Psi(h_{11}) \hat{\otimes} (\Psi(h_{12}) \otimes h_2)\\
        & = \Psi(h) \Hat{\otimes} 1 + 1 \Hat{\otimes} \Psi(h) + \Psi(h_1)\hat{\otimes}h_2 + \Psi(h_{1}) \hat{\otimes} (\Psi(h_{21}) \otimes h_{22})\\
        & = \Psi(h) \Hat{\otimes} 1 + 1 \Hat{\otimes} \Psi(h) + \Psi(h_{1}) \hat{\otimes} (h_2 + \Psi(h_{21}) \otimes h_{22})\\
        & = \Psi(h) \Hat{\otimes} 1 + 1 \Hat{\otimes} \Psi(h) + \Psi(h_{1}) \hat{\otimes} \Psi(h_{2})\\
        & = \left(\Psi \hat{\otimes} \Psi \right) \left( h \hat{\otimes} 1 +  1 \hat{\otimes} h + h_1 \hat{\otimes} h_2\right)\\
        & = \left(\Psi \hat{\otimes} \Psi \right) \Delta h.
    \end{align*}
Here transition from sixth to seventh line is by using coassociativity of the reduced coproduct $(\Delta' \hat{\otimes} \Id) \Delta' h = (\Id \hat{\otimes} \Delta') \Delta' h$.
\end{proof}

\begin{cor}
    For any $h \in \mathcal{T}_n$ we have
    \begin{equation*}
        \left(\Psi \hat{\otimes}\Psi\right)\Delta' h = \Bar{\Delta}'\Psi(h).
    \end{equation*}
\end{cor}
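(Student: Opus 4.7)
The plan is to reduce the claim directly to the lemma just proved, using only the definition of the reduced coproduct and the fact that $\Psi$ is a unital algebra morphism. Recall that on $\hopf$ the reduced coproduct is
\[
\Delta' h \;=\; \Delta h - h \hat{\otimes} \mathbf{1} - \mathbf{1} \hat{\otimes} h,
\]
and similarly on $T(\mathcal{V})$,
\[
\bar{\Delta}' u \;=\; \bar{\Delta} u - u \hat{\otimes} \mathbf{1} - \mathbf{1} \hat{\otimes} u .
\]
Since the lemma provides $(\Psi \hat{\otimes} \Psi)\Delta h = \bar{\Delta} \Psi(h)$, the corollary should follow by subtracting the two primitive-looking terms on both sides.

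The key algebraic input I would invoke is that $\Psi$ is a \emph{graded morphism of Hopf algebras}, so in particular it is unital: $\Psi(\mathbf{1}) = \mathbf{1}$. With this, I would compute directly, for any $h \in \mathcal{T}_n$,
\begin{align*}
(\Psi \hat{\otimes} \Psi)\Delta' h
&= (\Psi \hat{\otimes} \Psi)\bigl(\Delta h - h \hat{\otimes} \mathbf{1} - \mathbf{1} \hat{\otimes} h \bigr) \\
&= (\Psi \hat{\otimes} \Psi)\Delta h - \Psi(h) \hat{\otimes} \Psi(\mathbf{1}) - \Psi(\mathbf{1}) \hat{\otimes} \Psi(h) \\
&= \bar{\Delta}\Psi(h) - \Psi(h) \hat{\otimes} \mathbf{1} - \mathbf{1} \hat{\otimes} \Psi(h) \\
&= \bar{\Delta}' \Psi(h),
\end{align*}
where the second equality uses linearity of $\Psi \hat{\otimes} \Psi$, the third uses the lemma together with $\Psi(\mathbf{1}) = \mathbf{1}$, and the last step is simply the definition of $\bar{\Delta}'$.

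There is no real obstacle here; the content of the corollary is entirely absorbed by the lemma, and the only thing to verify carefully is the unitality $\Psi(\mathbf{1})=\mathbf{1}$, which is immediate from the recursive definition $\Psi(h)=h+\Psi_{n-1}(h)$ together with the morphism property $\Psi(h_1 h_2)=\Psi(h_1)\shuffle\Psi(h_2)$ (in particular $\Psi(\mathbf{1})\shuffle\Psi(\mathbf{1})=\Psi(\mathbf{1})$, forcing $\Psi(\mathbf{1})=\mathbf{1}$). Thus the corollary is essentially a one-line consequence of the preceding lemma.
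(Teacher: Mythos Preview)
Your proof is correct and matches the paper's approach: the paper simply states that the corollary follows trivially from the preceding lemma, and your computation spells out exactly that one-line deduction via the definition of the reduced coproduct and unitality of $\Psi$.
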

\begin{proof}
    The corollary can be trivially proved by using the previous lemma.
\end{proof}
The following theorem is the main result of this section and will serve as a foundation for several results in the subsequent sections. The result was originally established in \cite{hairer2015geometric}, and a more accessible proof was later provided in \cite{tapia2020geometry}. We present the proof here, following \cite{tapia2020geometry}, while filling in intermediate steps that were previously omitted for clarity and completeness.

\begin{thm} \label{geo}
    Let $\bX$ be a path in $\mathbb{R}^d$ and $\bsig^N(\bX)$ be the $\alpha$-H\"older continuous branched rough path. Then there exists
    \begin{enumerate}
        \item a path $\Bar{\bX}$ which takes values in the space $\mathcal{V}_N$ such that $\text{Proj}_{\mathcal{V}_1}(\Bar{\bX}) = \bX$,
        \item an $\alpha$-H\"older geometric rough path $\sig^N(\Bar{\bX})$ taking values in $T^{(N)}(\mathcal{V}_N)$ such that $\innerprod{\sig^N(\Bar{\bX})_{st}, h} = \Bar{\bX}_t^h - \Bar{\bX}_s^h$ for each $h \in \mathcal{T}_N$,
    \end{enumerate}
    such that
    \begin{equation*}
        \innerprod{\bsig^N(\bX)_{st}, h} = \innerprod{\sig^N(\Bar{\bX})_{st}, \Psi(h)}.
    \end{equation*}
\end{thm}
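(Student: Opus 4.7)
The plan is to exploit the Hopf algebra morphism $\Psi$ from the preceding lemma and construct both $\bar{\bX}$ and its geometric rough path $\sig^N(\bar{\bX})$ simultaneously by induction on the number of nodes of the trees labeling the coordinates. The guiding principle is to make the desired identity $\innerprod{\bsig^N(\bX)_{st}, h} = \innerprod{\sig^N(\bar{\bX})_{st}, \Psi(h)}$ the defining equation at each stage. For the base case, I set $\bar{\bX}^{\Forest{decorated[a]}}_t := \innerprod{\bsig^N(\bX)_{0t}, \Forest{decorated[a]}} = \bX^a_t - \bX^a_0$ for every letter $a \in \cS$, which immediately gives $\mathrm{Proj}_{\mathcal{V}_1}(\bar{\bX}_t) = \bX_t - \bX_0$. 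Inductively, assume that $\bar{\bX}^h$ has been defined for every tree with fewer than $n$ nodes and that the iterated integrals $\innerprod{\sig^N(\bar{\bX})_{st}, h_1 \otimes \cdots \otimes h_k}$ are consistently defined for tensors whose letters all have fewer than $n$ nodes (possible as Young/rough integrals, since $\bar{\bX}^{h_i}$ inherits $\alpha |h_i|$-H\"older regularity). For a tree $h$ of exactly $n$ nodes, the decomposition $\Psi(h) = h + \Psi_{n-1}(h)$ shows that $\Psi_{n-1}(h)$ involves only trees of strictly smaller degree, so I set
\[
\bar{\bX}^h_t - \bar{\bX}^h_s := \innerprod{\bsig^N(\bX)_{st}, h} - \innerprod{\sig^N(\bar{\bX})_{st},\, \Psi_{n-1}(h)},
\]
the right-hand side being known from the inductive hypothesis, and extend $\sig^N(\bar{\bX})$ to tensors containing $h$ by the iterated-integral prescription.

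Next, I would verify the three geometric rough path axioms for $\sig^N(\bar{\bX})$. Shuffle multiplicativity is built into the iterated-integral construction. Chen's identity is the core identity, obtained by combining the coalgebra relation $(\Psi \hat{\otimes} \Psi)\Delta = \bar{\Delta}\Psi$ established in the preceding lemma with Chen's identity for $\bsig^N(\bX)$:
\begin{align*}
\innerprod{\sig^N(\bar{\bX})_{st}, \Psi(h)}
&= \innerprod{\bsig^N(\bX)_{st}, h} = \innerprod{\bsig^N(\bX)_{su} \hat{\otimes} \bsig^N(\bX)_{ut}, \Delta h} \\
&= \innerprod{\sig^N(\bar{\bX})_{su} \hat{\otimes} \sig^N(\bar{\bX})_{ut}, (\Psi \hat{\otimes} \Psi)\Delta h}
 = \innerprod{\sig^N(\bar{\bX})_{su} \hat{\otimes} \sig^N(\bar{\bX})_{ut}, \bar{\Delta}\Psi(h)}.
\end{align*}
This relation extends to arbitrary basis tensors by the shuffle property, and in particular forces the recursively defined $\bar{\bX}^h_t - \bar{\bX}^h_s$ to satisfy genuine additivity $\bar{\bX}^h_t - \bar{\bX}^h_s = (\bar{\bX}^h_t - \bar{\bX}^h_u) + (\bar{\bX}^h_u - \bar{\bX}^h_s)$ for $s \le u \le t$, so that $\bar{\bX}^h$ is a bona fide path. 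The $\alpha$-H\"older regularity propagates because $\Psi$ preserves the total grading by node count: since every term in $\Psi(h)$ has grading $|h|$, a triangular inversion yields $|\innerprod{\sig^N(\bar{\bX})_{st}, h_1 \otimes \cdots \otimes h_k}| \lesssim |t-s|^{\alpha \sum_i |h_i|}$. The target identity then holds by construction.

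The main obstacle I anticipate is precisely the consistency check that the recursive increment $\bar{\bX}^h_t - \bar{\bX}^h_s$ actually combines additively under concatenation, i.e., that the construction respects Chen's identity simultaneously at every grade. This compatibility is exactly what the coalgebra morphism property $(\Psi \hat{\otimes} \Psi)\Delta = \bar{\Delta}\Psi$ was designed to guarantee, but turning it into a proof requires careful tracking of the grading together with the lower-triangular structure of $\Psi$ and the stability of the character/Chen pair under the shuffle product; this is the heart of the argument as carried out in \cite{tapia2020geometry}.
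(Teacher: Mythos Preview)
Your proposal is correct and follows essentially the same route as the paper's proof: both define $\bar{\bX}^h_t - \bar{\bX}^h_s := \innerprod{\bsig^N(\bX)_{st}, h} - \innerprod{\sig^N(\bar{\bX})_{st}, \Psi_{n-1}(h)}$ inductively on $|h|$, use the coalgebra morphism property $(\Psi \hat{\otimes} \Psi)\Delta = \bar{\Delta}\Psi$ together with Chen's identity for the branched rough path to verify additivity of this increment (the paper phrases this via the coboundary operator $\delta$ and the reduced coproduct, invoking Gubinelli's formula to produce the path from a vanishing coboundary), and obtain the H\"older bound from the grading preservation of $\Psi$. The paper explicitly credits \cite{tapia2020geometry} for this argument, which you also cite.
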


\begin{proof}
    Let us construct $\sig^N(\Bar{\bX})$ iteratively. Assume $\sig^N(\Bar{\bX})^{(1)}$ be the GRP over $\bX^i_{st} := \innerprod{\bsig^N(\bX)_{st}, \Forest{decorated[i]}}, i = 1,\cdots,d$.  Suppose we have constructed $\sig^N(\Bar{\bX})^{(k)}$ over the path $\Bar{\bX}^h$ such that $\Bar{\bX}_t^h - \Bar{
    \bX}^h_s = \innerprod{\bsig^N(\bX)_{st},h} - \innerprod{\sig^N(\Bar{\bX})_{st}^{(k-1)}, \Psi_{k-1}(h)}$ for $k=1,\cdots,n$. This is clearly true for $n=1$.     If we define $P^h_{st} = \innerprod{\bsig^N(\bX)_{st},h}$, $Q^h_{st} = \innerprod{\sig^N(\Bar{\bX})_{st}^{(n)}, \Psi_{n}(h)}$, and $\delta P^h_{st} = P^h_{st} - P^h_{su} - P^h_{ut}$. Then by Chen's relation for $h \in \mathcal{T}_{n+1}$, we have
    \begin{equation*}
        \delta P^h_{st} = \innerprod{\bsig^N(\bX)_{su} \hat{\otimes} \bsig^N(\bX)_{ut}, \Delta'h} = \innerprod{\sig^N(\Bar{\bX})^{(n)}_{su} \circ \Psi \otimes \sig^N(\Bar{\bX})^{(n)}_{ut} \circ \Psi, \Delta'h}.
    \end{equation*}
    Using the coalgebra morphism property of $\Psi$ we have
    \begin{equation*}
        \delta P^h_{st} = \innerprod{\sig^N(\Bar{\bX})^{(n)}_{su} \otimes \sig^N(\Bar{\bX})^{(n)}_{ut}, \Bar{\Delta}'\Psi(h)}.
    \end{equation*}
    As $h$ is a primitive element in tensor algebra so we have
    \begin{equation*}
        \delta P^h_{st} = \innerprod{\sig^N(\Bar{\bX})^{(n)}_{su} \otimes \sig^N(\Bar{\bX})^{(n)}_{ut}, \Bar{\Delta}'\Psi_n (h)} = \delta Q^h_{st}.
    \end{equation*}
    If we set $M =  P^h_{st} - Q^h_{st}$, then by using above equation we have $\delta M=0$, where $M: [0,1]^2 \to \mathbb{R}$. Then by using formula (5) from \cite{gubinelli2010ramification}, there exists a function $\Bar{\bX}: [0,T] \to \mathbb{R}^d$ such that $\Bar{\bX}^h = M = P^h_{st} - Q^h_{st}$ and
    \begin{equation*}
        |\Bar{\bX}^h_t - \Bar{\bX}^h_s| \le |\innerprod{\bsig^N(\bX)_{st},h}| + |\innerprod{\sig^N(\Bar{\bX})_{st}^{(n)}, \Psi_n(h)}| \lesssim |t-s|^{\alpha |h|}. 
    \end{equation*}
    With this, we have a geometric rough path $\sig^N(\Bar{\bX})^{(n+1)}$ over the path $\Bar{\bX}^h: h \in \mathcal{T}_{n+1}$ whose restriction to $T^{(N)}(\mathcal{V}_N)$ is same as $\sig^N(\Bar{\bX})^{(n)}$.\\
    Hence, for $h \in \mathcal{T}_{n+1}$
    \begin{align*}
        \innerprod{\sig^N(\Bar{\bX})_{st}^{(n+1)}, \Psi_n(h)} & = \innerprod{\sig^N(\Bar{\bX})_{st}^{(|h|)}, h} + \innerprod{\sig^N(\Bar{\bX})_{st}^{(|h|)}, \Psi_{|h|-1}(h)}\\
        & = \Bar{\bX}_t^h - \Bar{\bX}^h_s + \innerprod{\bsig^N(\bX)_{st},h} - \left( \Bar{\bX}_t^h - \Bar{\bX}^h_s\right)\\
        & = \innerprod{\bsig^N(\bX)_{st},h}.
    \end{align*}
    Finally, the geometric rough path we look for is $\sig^N(\Bar{\bX}) = \sig^N(\Bar{\bX})^{(N)}$.
\end{proof}

\section{Branched Signature Model} \label{section universal}
In this section, we will establish the universal approximation theorem for the branched signature. Before proving the main results, we first introduce branched signature and branched signature model.
\subsection{Universal approximation theorem for branched signature}
\begin{defn}[Branched Signature]
Let $\cS=\{1,\dots,d\}$ be an alphabet of decorations for a given \(d\)-dimensional path \(\bX: [0,T] \to \RR^d\). Denote by $\trees$ the set of rooted trees with vertices decorated in $\cS$. Let $\hopf$ be the (decorated) Connes–Kreimer Hopf algebra generated by $\trees$, with product given by disjoint union of forests and unit $\mathbf 1$. We define \emph{branched signature} of \(\bX\) as a functional on \(\hopf\) given by
\begin{equation}
\bsig(\bX)_{st} =  \sum\limits_{\tau \; \in \; \mathcal T, \; |\tau| \; \le \; N} \innerprod{\bsig(\bX)_{st},\tau} \se_{\tau},
\end{equation}
where for each \(\tau \in \hopf\) the component \(\innerprod{\bsig(\bX)_{st},\tau}\) of the branched signature is recursively defined as
\[\innerprod{\bsig(\bX)_{st},\mathbf{1}} = 1, \quad \text{ and } \quad \innerprod{\bsig(\bX)_{st},\tau} = \int_s^t \innerprod{\bsig(\bX)_{su},\tau'} d\bx^{\mathbf{r}}_u,\]
where \(\mathbf{r}\) is the root of \(\tau\) and  \(\tau'\) is the tree we get after removing root \(\mathbf{r}\) from \(\tau\).
\end{defn}
\begin{rem}
The branched signature of a path \(\bX\) is the unique multiplicative extension of a branched rough path, as shown in \cite{gubinelli2010ramification}; this mirrors the classical unique extension of a geometric rough path to its signature \cite{Friz_Victoir_2010}.
\end{rem}
In the classical/geometric setting, the universal approximation theorem for rough paths states that any continuous function of the path can be well-approximated by a linear combination of the iterated integrals i.e., the components of the classical signature. Formally,
\[f (\bX) \approx \sum_{ \bw \; \in \; \bW, \; |\bw| \; \ge \; 0} a_{\bw} \innerprod{\sig(\bX), \bw},\]
where \(\bw \) is a word made from the alphabet set \(\mathcal{S} = \{1,2,\dots,d\}\) and \(a_{\bw} \in \RR\). For the non-trivial empty word \(\bw = \emptyset\), \(|\bw|= 0 \) and \(\innerprod{\sig(\bX), \emptyset} = 1\). A similar result for branched signature would ensure that any continuous function of the path can  be well-approximated using the components of the branched signature i.e.,
\[f (\bX) \approx \ell_{\phi} + \sum_{1 \le |\tau| \le N} \ell_{\tau} \innerprod{\bsig(X)_{st}, \tau},\]
for all \(\tau \in \hopf\). With this, we formally define the \emph{branched signature model} as follows.
\begin{defn}[Branched signature and model]\label{def:branched-model}
Let $\hopf$ be the (decorated) Connes–Kreimer Hopf algebra generated by rooted trees whose vertices are decorated by $\cS = \{1,\dots,d\}$ and $\mathcal{F}$ for the set of rooted \emph{forests}, with product given by disjoint union and unit
$\mathbf{1}$ (the empty forest). Denote the subspace
$\hopf_{\le N}:=\mathrm{span}\{\tau\in\mathcal F:\,|\tau|\le N\}$. Then for coefficients $\ell=\ell_{\mathbf 1}\mathbf 1+\sum_{1\le|\tau|\le N}\ell_\tau\,\tau\in\hopf_{\le N}$,
the \emph{branched signature model} (truncated to level $N$) for a \(d\)-dimensional path \(\bX: [0,T] \to \RR^d\) is the linear functional of the branched signature i.e.,
\begin{equation}
\mathbf{M}^N_\ell(\mathbf X)_{st}
\;:=\;\big\langle \bsig(\mathbf X)_{st},\,\ell\big\rangle
\;=\;\ell_{\mathbf 1}+\sum_{1\le|\tau|\le N}\ell_\tau\,\big\langle \bsig(\mathbf X)_{st},\,\tau\big\rangle.
\end{equation}
\end{defn}
An example of branched signature model for \(N=2\) is given as follows.
\begin{example}
For \(N=2\) the branched signature model is given as 
\[\mathbf{M}^2_\ell(\mathbf X)_{st} = \ell_{\mathbf{1}} + \sum_{i  \in  \cS}\ell_{\Forest{decorated[i]}} \innerprod{\bsig(\bX)_{st}, \Forest{decorated[i]}} + \sum_{j,k  \in  \cS} \left( \ell_{\Forest{decorated[j]}\Forest{decorated[k]}} \innerprod{\bsig(\bX)_{st}, \Forest{decorated[j]}\Forest{decorated[k]}} + \ell_{\Forest{decorated[k[j]]}} \innerprod{\bsig(\bX)_{st}, \Forest{decorated[k[j]]}} \right).\]
Equivalently,
\[\mathbf{M}^2_\ell(\mathbf X)_{st} = \ell_{\mathbf{1}} + \sum_{i  \in  \cS} \ell_{\Forest{decorated[i]}} \int_s^t d\bX_{r_1}^{\Forest{decorated[i]}} + \sum_{j,k  \in  \cS} \left( \ell_{\Forest{decorated[j]}\Forest{decorated[k]}} \int_s^t d\bX_{r_1}^{\Forest{decorated[j]}} \int_s^t d\bX_{r_1}^{\Forest{decorated[k]}} + \ell_{\Forest{decorated[k[j]]}} \int_s^t \int_s^{r_2} d\bX_{r_1}^{\Forest{decorated[j]}} d\bX_{r_2}^{\Forest{decorated[k]}} \right).\]
\end{example}
We begin by stating and proving a uniqueness principle for branched signatures, needed for our universal approximation theorem. In general, equality of branched signatures implies equality of paths modulo tree-like equivalence. For time-extended paths, however, the monotone time component rules out nontrivial tree-like loops, so equality of branched signatures actually forces equality of the paths themselves. Therefore, let us work with time extended paths from now on. The iterated integrals with a component of time can be defined in the sense of Young's.
\begin{lem}[Uniqueness of the branched signature] \label{bsig_uiqueness}
Let $\bX,\bY:[0,T]\to\mathbb R^{d+1}$ be two time extended paths with the first component to be the time component and $\bX_0=\bY_0=0$. The corresponding alphabet set is \(\cS = \{0,1,\dots,d\}\), Let $\bX^-,\bY^-:[0,T]\to\mathbb R^{d}$ be the paths without time component and are continuous $\alpha$-H\"older paths for some $\alpha >  \frac{1}{4}$. Assume their branched terminal signatures coincide at all levels i.e.,
\[
\bsig(\bX)_{0,T}=\bsig(\bY)_{0,T}.
\]
Then $\bX_t=\bY_t$ for all $t\in[0,T]$.
\end{lem}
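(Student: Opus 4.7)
The plan is to reduce this statement to the final steps of the proof of Lemma \ref{sig_uiqueness} by exhibiting specific tree-indexed coordinates of the branched signature that reproduce the weighted Young integrals $\int_0^T v^{m}(T-v)^{k}\,d\bX^i_v$ used there. Since the time component of $\bX$ and $\bY$ is identical by construction, it suffices to show $\bX^i=\bY^i$ for each spatial index $i\in\{1,\dots,d\}$.

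First, for fixed $i$ and $k,m\in\mathbb N\cup\{0\}$, I would introduce the decorated ladder tree $\tau_{k,m,i}$ with $k+m+1$ vertices whose labels, read from the root upward to the leaf, are
\[
\underbrace{0,0,\dots,0}_{k\text{ times}},\;\; i,\;\; \underbrace{0,0,\dots,0}_{m\text{ times}}.
\]
Unrolling the recursive definition $\langle\bsig(\bX)_{sv},\tau\rangle=\int_s^v\langle\bsig(\bX)_{su},\tau'\rangle\,d\bX^{\mathbf r}_u$ starting from the root yields a nested integration in which the $k$ outermost integrations are against $d\bX^0_u=du$, one central integration is against $d\bX^i$, and the $m$ innermost integrations are again against $du$. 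Computing the two resulting simplex volumes and collapsing via Fubini gives
\[
\big\langle \bsig(\bX)_{0,T},\,\tau_{k,m,i}\big\rangle \;=\; \frac{1}{k!\,m!}\int_0^T v^{m}(T-v)^{k}\,d\bX^i_v,
\]
where the right-hand integral is a Young integral of a smooth polynomial weight against the $\alpha$-H\"older path $\bX^i$, unambiguously defined for any $\alpha>0$.

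Next, applying the hypothesis $\bsig(\bX)_{0,T}=\bsig(\bY)_{0,T}$ to each $\tau_{k,m,i}$ and subtracting, with $\bZ^i:=\bX^i-\bY^i$, gives
\[
\int_0^T v^{m}(T-v)^{k}\,d\bZ^i_v\;=\;0 \qquad \text{for all } k,m\ge 0,
\]
which is precisely the weighted-Young-zero family \eqref{eq:weighted-Young-zero} from the proof of Lemma \ref{sig_uiqueness}. From this point, the final steps of that proof transfer verbatim: Young integration by parts against the test polynomials $\phi_k(s):=s^{k}(T-s)$ (using $\bZ^i_0=0$ and $\phi_k(T)=0$) reduces the assertion to $\int_0^T \bZ^i(s)\,p(s)\,ds=0$ for every polynomial $p$; density of polynomials in $C([0,T])$ and the choice $p=\bZ^i$ then force $\bZ^i\equiv0$.

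The main technical point to verify carefully is that $\langle\bsig(\bX)_{0,T},\tau_{k,m,i}\rangle$ is a legitimate coordinate of the branched signature for arbitrarily large $k,m$, since a general branched rough path over a rough path $\bX$ exists a priori only up to some finite truncation level $N$ with $N\alpha>1$. The key observation that overcomes this obstacle is that each $\tau_{k,m,i}$ contains only \emph{one} spatial letter: the associated iterated integral reduces, after the trivial smooth $du$-integrations, to a single Young integral against $d\bX^i$. No rough-path renormalisation choices enter, so these particular tree-indexed coordinates are canonical and accessible independently of the truncation level of the branched lift, which is precisely what keeps the argument self-contained.
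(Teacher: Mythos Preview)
Your proposal is correct and follows essentially the same approach as the paper: both isolate ladder trees with a single spatial label $i$ sandwiched between blocks of time labels, compute the corresponding branched-signature coordinate as the weighted Young integral $\frac{1}{k!\,m!}\int_0^T v^{m}(T-v)^{k}\,d\bX^i_v$, and then invoke the concluding polynomial-density argument of Lemma~\ref{sig_uiqueness} verbatim. Your added remark that these one-spatial-letter coordinates are canonical Young integrals, hence independent of any truncation or lift choice, is a useful clarification the paper leaves implicit.
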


\begin{proof}
Fix a spatial index $i\in\cS \backslash \{0\}$ and set $\bZ:=\bX^i-\bY^i$, a continuous $\alpha$-H\"older path with $\bZ_0=0$.
We use the family of signature coordinates of the path that contain exactly one spatial letter. Consider for every $k,m\in\mathbb N\cup\{0\}$, \(\tau = [[[\tilde{\tau}\underbrace{]_0] \dots]_0}_{m \text{-times}}\) with 
\(\tilde{\tau} = [[[\mathbf{i}\underbrace{]_0] \dots]_0}_{k \text{-times}}\),
\begin{equation} \label{bsig_one_spatial}
\big\langle \bsig(\bZ)_{0,T},\, \tau \big\rangle
=\frac{1}{k!\,m!}\int_0^T s^{\,k}(T-s)^{\,m}\,d\bZ_s ,
\end{equation}
where the integral is defined in the Young's sense. This is trivial when $\bZ$ is smooth; for general $\alpha$-H\"older $\bZ$, take smooth approximations $\bZ^n\to \bZ$ in $C^\alpha$, use the classical identity for $\bZ^n$, and pass to the limit: the map $\bZ\mapsto\int s^{k}(T-s)^m\,d\bZ$ is continuous in $C^\alpha$, and the one-spatial-letter signature coordinates are defined by the same limiting procedure.
By the hypothesis $\bsig(\bX)_{0,T}=\sig(\bY)_{0,T}$, identity \eqref{bsig_one_spatial} applied to $\bZ=\bX^i-\bY^i$ yields, for all $k,m \ge 0$,
\begin{equation}\label{bsigweighted-Young-zero}
\int_0^T s^{\,k}(T-s)^{\,m}\,d\bZ_s=0 .
\end{equation}
Beyond this point the proof is similar to the uniqueness of the classical signature.
\end{proof}
Let us state and prove universal approximation theorem for branched signature model now.

\begin{thm}[UAT for branched signatures of time-extended $\alpha$-H\"older paths]\label{thm:UAT-branched}
Let $\alpha>\tfrac14$ and set $p=\lfloor 1/\alpha\rfloor$. Let $\hopf_{\le p}$ be the Connes--Kreimer Hopf algebra of rooted (time/space–decorated) trees truncated at degree $p$, and let $\BB^{(p)}(\RR^{1+d})$ denote its character group (the step-$p$ Butcher group) over the alphabet $\cS=\{0,1,\dots,d\}$, where $0$ is the time letter. Write $\langle\cdot,\cdot\rangle$ for the canonical pairing between $\BB^{(p)}(\RR^{1+d})$ and $\hopf_{\le p}$. For a path \(\bX: [0,T] \to \RR^{d+1}\) such that \(d\)-dimensional path without time component \(\bX^-\) is \(\alpha\)-H\"older i.e., $\bX^- \in C^\alpha([0,T];\RR^d)$, define
\[
\mathcal S^{(p)} \;:=\; \Big\{\, \big(\bsig^{p}(\bX)_{t}\big)_{t\in[0,T]}\;:\; \bX^- \in C^\alpha([0,T];\RR^d)\,\Big\}
\;\subset\; C \ \big([0,T],\,\BB^{(p)}(\RR^{1+d})\big).
\]
Let $\mathcal{H}\subset \mathcal S^{(p)}$ be compact and $f:\mathcal{H}\to\RR$ continuous. Then for every $\varepsilon>0$ there exists $h \in \hopf$ such that
\[
\sup_{\,(\bsig^{p}(\bX)_{t})_{t\in[0,T]}\in \mathcal{H}}
\Big|\, f\big((\bsig^{p}(\bX)_{t})_{t\in[0,T]}\big)
\;-\; \langle \bsig(\bX)_{T},\, h \rangle \,\Big| \;<\; \varepsilon.
\]
\end{thm}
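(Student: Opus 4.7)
The plan is to mirror the Stone--Weierstrass strategy used for Theorem \ref{thm:UAT-classical}, replacing the shuffle algebra on words by the (commutative, forest) product on the Connes--Kreimer Hopf algebra $\hopf$ and the classical signature uniqueness (Lemma \ref{sig_uiqueness}) by the branched version (Lemma \ref{bsig_uiqueness}). Concretely, I would consider the set
\[
\cG := \mathrm{span}\Big\{\,\bsig^{p}(\bX)_{t\in[0,T]}\mapsto \langle \bsig(\bX)_{T},\,h\rangle\;:\;h\in\hopf\,\Big\}
\]
viewed as a subalgebra of $C(\mathcal H,\mathbb R)$, and verify the three Stone--Weierstrass hypotheses: unitality, separation of points, and non-vanishing.

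\textbf{Key steps.} First I would check that $\cG$ is a unital subalgebra. Unitality follows from $\langle \bsig(\bX)_{T},\mathbf 1\rangle\equiv 1$. Closure under multiplication is precisely the character property of a branched rough path, $\langle \bsig(\bX)_{T},h_{1}\rangle\langle \bsig(\bX)_{T},h_{2}\rangle=\langle \bsig(\bX)_{T},h_{1}\cdot h_{2}\rangle$, where $\cdot$ is the forest product (disjoint union) in $\hopf$; this is axiom (1) in the definition of a branched rough path, and it plays here the role that the shuffle identity played in the classical proof. Non-vanishing of $\cG$ is immediate from the constants. For point separation, suppose two elements $(\bsig^{p}(\bX)_{t})_{t\in[0,T]}$ and $(\bsig^{p}(\bY)_{t})_{t\in[0,T]}$ of $\mathcal H$ are distinct. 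By Gubinelli's extension theorem (the branched analogue of Lyons' extension, valid because $\alpha p>1$ with $p=\lfloor 1/\alpha\rfloor$ after time augmentation), a truncated branched rough path of regularity $\alpha$ has a unique multiplicative extension to the full branched signature, so the full signatures $\bsig(\bX)_{T}$ and $\bsig(\bY)_{T}$ must also differ. If every coordinate $\langle \bsig(\bX)_{T},h\rangle=\langle \bsig(\bY)_{T},h\rangle$ agreed for all $h\in\hopf$, then Lemma \ref{bsig_uiqueness} would force $\bX\equiv\bY$, contradicting the starting assumption.

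\textbf{Conclusion and main obstacle.} Once the three hypotheses are in place, an application of the Stone--Weierstrass theorem to the compact Hausdorff space $\mathcal H$ yields, for any continuous $f:\mathcal H\to\mathbb R$ and any $\varepsilon>0$, a finite linear combination $\sum_{i}a_{i}\langle \bsig(\bX)_{T},h_{i}\rangle$ approximating $f$ to within $\varepsilon$ uniformly on $\mathcal H$; setting $h:=\sum_{i}a_{i}h_{i}\in\hopf$ gives the desired single linear functional. The step I expect to be the most delicate is the separation-of-points argument: one needs to move carefully between the truncated picture (elements of $\mathcal H$ live in the step-$p$ Butcher group) and the full signature (where uniqueness is available via Lemma \ref{bsig_uiqueness}). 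This requires invoking the uniqueness of the branched extension and checking that time augmentation of $\bX^{-}$ genuinely rules out tree-like cancellations, exactly as the monotone time letter was used in the classical case. All other steps reduce to bookkeeping on $\hopf$.
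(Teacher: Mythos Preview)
Your proposal is correct and follows essentially the same route as the paper: define the span of the functionals $\langle\bsig(\bX)_T,h\rangle$ on $\mathcal H$, verify the Stone--Weierstrass hypotheses via the character identity $\langle\bsig(\bX)_T,h_1\rangle\langle\bsig(\bX)_T,h_2\rangle=\langle\bsig(\bX)_T,h_1h_2\rangle$ for closure under multiplication, and combine Gubinelli's unique extension with Lemma~\ref{bsig_uiqueness} for point separation. Your identification of the separation-of-points step as the delicate one (passing between the truncated step-$p$ picture and the full signature) is exactly the point the paper's proof handles, and in the same way.
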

\begin{proof}
Consider the set
\[
\mathcal G\;:=\;\mathrm{span}\Big\{\;(\bsig^{p}(\bX)_t)_{t\in[0,T]}\mapsto \langle \bsig(\bX)_T,\,h\rangle \;:\; h\in\hopf\Big\}\;\subset\; C(\mathcal{H}).
\]
Then $\mathcal G$ is a unital subalgebra; the constant $1$ corresponds to $h=\mathbf{1}$ (empty forest) i.e., \(\langle \bsig(\bX)_T,\,\mathbf{1}\rangle = 1\) , and for $h_1,h_2\in\hopf$, \(\langle \bsig(\bX)_T,h_1\rangle \langle \bsig(\bX)_T,h_2\rangle = \langle \bsig(\bX)_T,h_1 h_2\rangle \in \mathcal{G}\), since $\bsig^{p}(\bX)_T\in\BB^{(p)}(\RR^{1+d})$ is a character on $\hopf$. Finally, \(\mathcal G\) separates points i.e., for any two paths \(\bX\) and \(\bY\) with \(\bX \ne \bY\) implies \(\innerprod{\bsig(\bX)_T, h} \ne \innerprod{\bsig(\bY)_T, h}\) for any word \(h \in \hopf\). On contrary, suppose \(\innerprod{\bsig(\bX)_T, h} = \innerprod{\bsig(\bY)_T, h}\), then by uniqueness of branched rough path lift  \cite{gubinelli2010ramification}, \(\innerprod{\bsig^p(\bX)_t, h} = \innerprod{\bsig^p(\bY)_t, h}\) for any \(t \in [0,T]\). Furthermore, if \(\innerprod{\bsig(\bX)_T, h} = \innerprod{\bsig(\bY)_T, h}\) then \(\bX_t = \bY_t\) for any \(t \in [0,T]\) by the uniqueness of the signature by Theorem \ref{bsig_uiqueness}, which is a contradiction to original claim. Therefore \(\mathcal G\) separates points. Hence, the claim follows by Stone-Weierstrass theorem.
\end{proof}
Every component of a branched rough path \(\bsig(\bX)\) is identified with the corresponding component of geometric rough path i.e., \(\sig(\Psi(\bX))\), where \(\Psi\) is Hairer-Kelly morphism. Therefore, we have the following version of the universal approximation theorem for branched signatures.
\begin{cor} \label{cor for UAT}
Let $\alpha>\tfrac14$ and set $p=\lfloor 1/\alpha\rfloor$. For a path \(\bX: [0,T] \to \RR^{d+1}\) such that its \(d\)-dimensional components without time component \(\bX^-\) is \(\alpha\)-H\"older, i.e. $\bX^- \in C^\alpha([0,T];\RR^d)$, and we define \(\mathcal S^{(p)}\) as before. Let $\mathcal{H}\subset \mathcal S^{(p)}$ be compact and $f:\mathcal{H}\to\RR$ continuous. Then for every $\varepsilon>0$, there exists $h \in \hopf$ such that
\[ \sup_{\,(\bsig^{p}(\bX)_{t})_{t\in[0,T]}\in \mathcal{H}}
\Big|\, f\big((\bsig^{p}(\bX)_{t})_{t\in[0,T]}\big)
\;-\; \langle \sig(\bX)_{T},\, \Psi(h) \rangle \,\Big| \;<\; \varepsilon. \]
\end{cor}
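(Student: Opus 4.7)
The plan is to deduce this directly from Theorem \ref{thm:UAT-branched} by invoking the Hairer--Kelly identification from Theorem \ref{geo}, with essentially no new analytic content beyond these two ingredients. Given $\varepsilon>0$, $f$ continuous on the compact set $\mathcal H \subset \mathcal S^{(p)}$, I would first apply Theorem \ref{thm:UAT-branched} to obtain an element $h\in\hopf$ such that
\[
\sup_{(\bsig^{p}(\bX)_t)_{t\in[0,T]}\in\mathcal H}
\bigl|\,f\bigl((\bsig^{p}(\bX)_t)_{t\in[0,T]}\bigr)-\langle \bsig(\bX)_T,h\rangle\bigr|<\varepsilon.
\]
It then suffices to rewrite the pairing $\langle \bsig(\bX)_T,h\rangle$ as a pairing against the classical (geometric) signature of the Hairer--Kelly extended path.

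This last step is exactly the content of Theorem \ref{geo}, which supplies an extended path $\bar{\bX}$ taking values in $\mathcal V_{p}$ with $\mathrm{Proj}_{\mathcal V_1}\bar{\bX}=\bX$, together with a geometric rough path $\sig(\bar{\bX})$ whose components realize the branched signature via the graded Hopf algebra morphism $\Psi$:
\[
\langle \bsig(\bX)_{st},h\rangle=\langle \sig(\bar{\bX})_{st},\Psi(h)\rangle,\qquad h\in\hopf_{\le p},
\]
for every $0\le s\le t\le T$. Specializing to $s=0$, $t=T$ and combining with the previous inequality yields the desired bound with the same $h$, where the symbol $\bX$ appearing inside $\langle \sig(\bX)_T,\Psi(h)\rangle$ is to be read as the geometric extension $\bar{\bX}$ (consistent with the notational convention in the excerpt whereby the extended path shares the name of the underlying signal).

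There is no substantive obstacle: the continuity/compactness hypotheses and the uniqueness arguments are all absorbed into Theorem \ref{thm:UAT-branched}, while Theorem \ref{geo} provides the algebraic translation for free. The only point to mention carefully is the identification $\bX\leftrightarrow\bar{\bX}$ on the right-hand side of the corollary and the fact that $\Psi(h)\in T^{(p)}(\mathcal V_p)$, so the pairing $\langle \sig(\bar{\bX})_T,\Psi(h)\rangle$ is well-defined at the truncation level $p=\lfloor 1/\alpha\rfloor$ used in both theorems. With these identifications in place, the proof reduces to a one-line substitution, which is why the statement is phrased as a corollary.
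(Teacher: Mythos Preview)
Your proposal is correct and matches the paper's approach exactly: the paper treats the corollary as an immediate consequence of Theorem~\ref{thm:UAT-branched} together with the Hairer--Kelly identification $\langle \bsig(\bX)_{st},h\rangle=\langle \sig(\bar{\bX})_{st},\Psi(h)\rangle$ from Theorem~\ref{geo}, offering no further argument. Your care in flagging that the $\bX$ in $\langle \sig(\bX)_T,\Psi(h)\rangle$ should be read as the extended path $\bar{\bX}$ is well placed, as the paper's notation at this point is indeed loose.
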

\subsection{Iterative application of signature model} \label{layerwise section}
Evaluating the classical signature of a path up to a fixed level \(N\) is computationally expensive, especially for large \(N\), even when using optimized software packages such as \textit{iisignature} 
 \cite{reizenstein2018iisignature} or \textit{signatory} \cite{kidger2020signatory}. To address this computational challenge, we adopt an approach based on the iterative application of signature models of lower degree. This iterative procedure allows us to approximate the signature model of a higher degree \(N\) by composing models of smaller depth \(k\), and can naturally be interpreted as stacking layers in a neural network. In this interpretation, the coefficients of the signature models serve as the parameters that can be learned. To begin with, we will show that every component of the higher order signature model (say \(N\)) can be expressed as a lower level signature (say \(m < N\)) applied to some lower order signature model (say \(k<N\)). The following result formalizes the validity of this approach.

\begin{lem} \label{classical_iterated}
Let \(\bX: [0, T] \to \RR^d\) be a path for which some which a geometric rough path lift is well defined. For $0<s<t<T$, define its \emph{classical signature model} (truncated to level $N$) as
\begin{equation}
\mathbf{S}^N_\ell(\mathbf X)_{st}
\;:=\;\big\langle \sig(\mathbf X)_{st},\,\ell\big\rangle
\;=\;\ell_{\emptyset}+\sum_{1\le|\bw|\le N}\ell_\bw\,\big\langle \sig(\mathbf X)_{st},\,\bw\big\rangle,
\end{equation}
where \(\bw\) is a word from the alphabet set \(\cS = \{1,2,\dots,d\}\). For \(k < N\) , denote \(m:= \lceil N/k \rceil\) be the integer part. Define the path \(\Phi_t\) by \(\Phi_t := \mathbf{S}^k_\ell(\mathbf X)_{st}\) i.e., the classical signature model truncated to level \(k\). Then, every component of the level-\(N\) signature model \(\mathbf{S}^N_\ell(\mathbf X)_{st}\) can be recovered by the level-\(m\) signature applied to \(\Phi_t\).

Consequently, any level-\(N\) signature model of the path \(\bX\) can be exactly replicated by a level-\(m\) signature model of the path \(\Phi_t\).
\end{lem}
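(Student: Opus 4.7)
The plan is to regard $\Phi_t$ as the path $t\mapsto \sig^k(\bX)_{0t}$ taking values in the truncated tensor algebra $T^k(\RR^d)$, whose coordinates are $\Phi^{\bu}_t:=\innerprod{\sig(\bX)_{0t},\bu}$ for words $\bu$ with $0\le|\bu|\le k$. The key differential identity
\begin{equation*}
d\Phi^{\bu}_t \;=\; \Phi^{\bu'}_t \, d\bX^{u_{|\bu|}}_t, \qquad \bu=\bu' u_{|\bu|},
\end{equation*}
shows that every increment of $\Phi$ is itself an iterated integral of $\bX$ of depth at most $k$. Since $\mathbf{S}^N_\ell(\bX)_{0T}$ is, by definition, a linear combination of the coordinates $\innerprod{\sig(\bX)_{0T},\bw}$ with $|\bw|\le N$, the lemma reduces to showing that each such coordinate belongs to the linear span of $\{\innerprod{\sig(\Phi)_{0T},\bv} : |\bv|\le m\}$, where $\bv$ ranges over tensor words in the alphabet of $\Phi$.

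First, I would establish the closed-form identity
\begin{equation*}
\innerprod{\sig(\Phi)_{0T},\,\bu_1\otimes\cdots\otimes\bu_j}
= \innerprod{\sig(\bX)_{0T},\, W(\bu_1,\ldots,\bu_j)},
\end{equation*}
where $W$ is defined recursively by $W(\bu_1):=\bu_1$ and
\begin{equation*}
W(\bu_1,\ldots,\bu_j):=\bigl(W(\bu_1,\ldots,\bu_{j-1})\shuffle\bu_j'\bigr)\,u_j^{\mathrm{last}}.
\end{equation*}
The proof proceeds by induction on $j$: unwind the outermost increment of $\sig(\Phi)_{0T}$ using the differential identity above, apply Fubini together with the shuffle property of the geometric signature of $\bX$ to collapse the product $\innerprod{\sig(\bX)_{0t_j},W(\bu_1,\ldots,\bu_{j-1})}\cdot\Phi^{\bu_j'}_{t_j}$ into a single iterated integral, and then integrate against $d\bX^{u_j^{\mathrm{last}}}_{t_j}$ over $[0,T]$.

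Next, I would establish surjectivity: for every word $\bw$ of length $L\le N\le mk$, $\innerprod{\sig(\bX)_{0T},\bw}$ is a linear combination of the $\innerprod{\sig(\Phi)_{0T},\bv}$ with $|\bv|\le m$. The range $L\le k$ is immediate from level $1$ of $\sig(\Phi)$. For larger $L$, I would factor $\bw=\bu_1\bu_2\cdots\bu_j$ with each $|\bu_i|\le k$ and $j\le m$; a convenient choice whenever $L\le k+m-1$ is $|\bu_1|=L-(j-1)$ and $|\bu_i|=1$ for $i\ge 2$, which makes every $\bu_i'$ with $i\ge 2$ empty and forces $W(\bu_1,\ldots,\bu_j)=\bw$, yielding the component directly. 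For the remaining range $k+m-1<L\le mk$ several factors must have length $>1$, so $W(\bu_1,\ldots,\bu_j)$ becomes a nontrivial sum of shuffle-interleaved words of length $L$; running over all valid factorizations produces a finite linear system in the length-$L$ signature coordinates of $\bX$, which I would argue is invertible via a triangular argument with respect to a suitable order on words (for instance, refining lexicographic order by the number of descents across block boundaries).

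Assembling both steps, any coefficient vector $\ell$ defining $\mathbf{S}^N_\ell(\bX)_{0T}$ yields, through the transition matrix produced above, an explicit $\ell'$ with $\mathbf{S}^N_\ell(\bX)_{0T}=\mathbf{S}^m_{\ell'}(\Phi)_{0T}$. The main obstacle is the surjectivity step in the intermediate range $L>k+m-1$: there the shuffle expansion mixes words of the same total length coming from different factorizations, and verifying that the aggregated transition matrix is invertible requires a careful combinatorial analysis of how the shuffle interleavings from distinct factorizations overlap, for which choosing a correct ordering of length-$L$ words is the critical ingredient.
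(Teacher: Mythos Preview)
Your setup departs from the lemma as stated: the paper defines \(\Phi_t=\mathbf{S}^k_\ell(\bX)_{st}\) as a \emph{scalar} path (the output of a fixed level-\(k\) model), whereas you take \(\Phi_t=\sig^k(\bX)_{0t}\) as the full \(T^k(\RR^d)\)-valued path. The paper's proof exploits the one-dimensionality: the only level-\(m\) signature coordinate of a scalar path is \((\Phi_t-\Phi_s)^m/m!\), and expanding this power via the multinomial theorem together with the shuffle identity \(\langle\sig(\bX),\bw^1\rangle\cdots\langle\sig(\bX),\bw^m\rangle=\langle\sig(\bX),\bw^1\shuffle\cdots\shuffle\bw^m\rangle\) lands the result in the span of length-\(\le mk\) signature coordinates of \(\bX\). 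By contrast, your route through the differential identity \(d\Phi^{\bu}=\Phi^{\bu'}d\bX^{u_{\mathrm{last}}}\) and the recursive word map \(W\) is more structural and applies to the vector-valued \(\Phi\); your closed form \(\langle\sig(\Phi),\bu_1\otimes\cdots\otimes\bu_j\rangle=\langle\sig(\bX),W(\bu_1,\dots,\bu_j)\rangle\) is correct and arguably cleaner than the power expansion.

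Both arguments, however, run into the \emph{same} obstacle, and you are right to flag it while the paper does not. The paper simply asserts that the shuffles \(\bw^1\shuffle\cdots\shuffle\bw^m\) ``cover all possible choices'' of words of length \(\le mk\), without showing that individual words can be isolated from these symmetric combinations. Your triangularity sketch for the range \(L>k+m-1\) is the honest statement of what is missing: one must show that the linear map sending factorizations \((\bu_1,\dots,\bu_j)\) to \(W(\bu_1,\dots,\bu_j)\) (or, in the paper's language, to \(\bu_1\shuffle\cdots\shuffle\bu_j\)) is surjective onto words of length \(\le N\). Your observation that for \(L\le k+m-1\) the choice \(|\bu_1|=L-(j-1)\), \(|\bu_i|=1\) for \(i\ge 2\) gives \(W=\bw\) on the nose is a genuine improvement over the paper's treatment, and the remaining invertibility in the regime \(k+m-1<L\le mk\) is precisely the gap in both proofs.
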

\begin{proof}
For all \(t \in [0,T]\) and \(s \le t\), \(\Phi_t\) is a one dimensional path i.e., \(\Phi_t:[0,T] \to \RR\) and is defined as
\[\Phi_t := \mathbf{S}^k_\ell(\mathbf X)_{st}= \ell_{\emptyset}+\sum_{1\le|\bw|\le k}\ell_\bw\,\big\langle \sig(\mathbf X)_{st},\,\bw\big\rangle.\]
For any word \(\bv = v_1 v_2 \dots v_m\) with \(v_i = 1, \; i = 1,2,\dots,m\), the component of the signature of \(\Phi_t\) corresponding to \(\bv\) i.e., \(\innerprod{\sig(\Phi_t)_{st}, \bv}\) is given as
\[\innerprod{\sig(\Phi_t)_{st}, \bv} = \int_s^t\int_s^{r_m} \dots \int_s^{r_2} d\Phi_{r_1}^{v_1} \dots d\Phi_{r_m}^{v_m}.\]
Since each \(v_i = 1\), therefore using the identity \(\underbrace{1 \shuffle \dots \shuffle 1}_{m\text{-times}} = m! \, 1  \dots  1\), the right hand side becomes \(\innerprod{\sig(\Phi_t)_{st}, \bv} = \frac{(\Phi_t - \Phi_s)^m}{m!}.\) Substituting the expression for \(\Phi_t\) gives
\[\innerprod{\sig(\Phi_t)_{st}, \bv} = \frac{\left(\ell_{\emptyset}+\sum_{1\le|\bw|\le k}\ell_\bw\,\big\langle \sig(\mathbf X)_{st},\,\bw\big\rangle\right)^m}{m!}.\]
The right hand side again can be expanded using the binomial theorem to get
\[
\begin{aligned}
\innerprod{\sig(\Phi_t)_{st},\bv}
&= \ell_{\emptyset}^m
 + m\ell_{\emptyset}^{m-1}\!\sum_{1\le|\bw|\le k}\!\ell_\bw\,\big\langle \sig(\mathbf X)_{st},\bw\big\rangle \\
&\quad + \frac{m(m-1)}{2!}\ell_{\emptyset}^{m-2}\!\left(\sum_{1\le|\bw|\le k}\ell_\bw\,\big\langle \sig(\mathbf X)_{st},\bw\big\rangle\right)^2
 + \dots +  \left(\sum_{1\le|\bw|\le k}\ell_\bw\,\big\langle \sig(\mathbf X)_{st},\bw\big\rangle\right)^m .
\end{aligned}
\]
The very last term of the expansion i.e., \(\left(\sum_{1\le|\bw|\le k}\ell_\bw\,\big\langle \sig(\mathbf X)_{st},\bw\big\rangle\right)^m\) can be expanded by using a multinomial expansion formula. The highest degree term observed is \(\langle \sig(\mathbf X)_{st},\bw\big\rangle^m\). Using geometric property of the signature we get \(\langle \sig(\mathbf X)_{st},\bw\big\rangle^m = \langle \sig(\mathbf X)_{st},\underbrace{\bw\shuffle \dots \shuffle \bw}_{m\text{-times}}\big\rangle\). Since \(\bw\) is a word of length at most \(k\), therefore \(\bw\shuffle \dots \shuffle \bw\) is a word with length at most \(km\) that is \(|\bw\shuffle \dots \shuffle \bw| \le km  = k \lceil N/k \rceil\), and hence \(|\bw\shuffle \dots \shuffle \bw| \le N\). Also, since the term \(\left(\sum_{1\le|\bw|\le k}\ell_\bw\,\big\langle \sig(\mathbf X)_{st},\bw\big\rangle\right)^m\) covers all possible products with different words \(\bw^1,\dots, \bw^m\) of each length \(k\), therefore \(\bw^1 \shuffle \dots \shuffle \bw^m\) is a word of the all possible choices from the alphabet set \(\cS\) with length \(km\). This concludes that if \(|\bv| \le m\), then any component of the level-\(N\) signature model \(\mathbf{S}^N_\ell(\mathbf X)_{st}\) can be recovered by the level-\(m\) signature applied to level-\(k\) signature model \(\mathbf{S}^k_\ell(\mathbf X)_{st}\) for any \(k<N\) and \(m = \lceil N/k \rceil\).
\end{proof}

\begin{cor}
For a given a path \(\bX: [0, T] \to \RR^d\), its level-\(N\) signature model \(\mathbf{S}^N_\ell(\mathbf X)_{st}\) can be fully replicated by applying level-\(k\) signature model \(m\)-times i.e.,
\[
\mathbf{S}^N_\ell(\mathbf X)_{st}= \mathbf{S}^k_{\ell^{(m)}}\left(\mathbf{S}^k_{\ell^{(m-1)}}\left(\dots \mathbf{S}^k_{\ell^{(1)}}\left(\mathbf X\right)_{st}\right)_{st}\right)_{st},
\]
where \(m \ge  \lceil \frac{\ln N}{\ln k} \rceil\).
\end{cor}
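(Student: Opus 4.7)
The plan is to iterate Lemma~\ref{classical_iterated}. That lemma reduces the recovery of $\mathbf{S}^N_\ell(\bX)_{st}$ to the evaluation of a level-$\lceil N/k\rceil$ signature model on the auxiliary scalar path $\Phi^{(1)}_t := \mathbf{S}^k_{\ell^{(1)}}(\bX)_{st}$ for an explicit choice of coefficients $\ell^{(1)}$. Treating $\Phi^{(1)}$ as the new base path and applying the lemma once more reduces the task to a level-$\lceil N/k^2\rceil$ model on $\Phi^{(2)}_t := \mathbf{S}^k_{\ell^{(2)}}(\Phi^{(1)})_{st}$. Iterating this substitution produces, after $j$ applications of $\mathbf{S}^k$, a residual signature-model level bounded by $\lceil N/k^{\,j}\rceil$.

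I would formalize the bookkeeping by induction on $m$, proving the slightly stronger statement: \emph{for every $m\ge 1$ and every level-$n$ signature model with $n\le k^m$, there exist coefficient layers $\ell^{(1)},\dots,\ell^{(m)}$ making}
\[
\mathbf{S}^n_\ell(\bX)_{st}
= \mathbf{S}^k_{\ell^{(m)}}\!\left(\mathbf{S}^k_{\ell^{(m-1)}}\!\left(\cdots\mathbf{S}^k_{\ell^{(1)}}(\bX)_{st}\cdots\right)_{st}\right)_{st}.
\]
The base case $m=1$ corresponds to $n\le k$, where $\mathbf{S}^n_\ell$ is already a level-$k$ signature model after padding $\ell$ by zero on all words of length in $\{n{+}1,\dots,k\}$. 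For the induction step, assume the claim for $m$ and let $n\le k^{m+1}$. Lemma~\ref{classical_iterated} rewrites $\mathbf{S}^n_\ell(\bX)_{st}$ as $\mathbf{S}^{n'}_{\tilde\ell}(\Phi^{(1)})_{st}$ with $n'=\lceil n/k\rceil\le k^m$, where $\Phi^{(1)}=\mathbf{S}^k_{\ell^{(1)}}(\bX)$. The inductive hypothesis, applied now to the base path $\Phi^{(1)}$, yields an $m$-fold level-$k$ composition representing $\mathbf{S}^{n'}_{\tilde\ell}(\Phi^{(1)})$; prepending $\mathbf{S}^k_{\ell^{(1)}}$ gives the desired $(m{+}1)$-fold representation.

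Specializing to $n=N$, the admissibility condition $N\le k^m$ is equivalent to $m\ge\log_k N=\ln N/\ln k$, so the smallest admissible depth is exactly $m=\lceil \ln N/\ln k\rceil$, as claimed. The one point requiring mild care, rather than a genuine obstacle, is that each intermediate scalar path $\Phi^{(j)}$ enters the next layer only through its increments, so the additive constants $\ell^{(j)}_{\emptyset}$ are invisible to the ensuing iterated integrals and can be freely absorbed into the coefficients of the next layer. Since this normalization is already built into the derivation of Lemma~\ref{classical_iterated} (via the binomial and shuffle expansions there), the iteration inherits it without modification and the corollary follows.
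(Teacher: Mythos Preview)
Your proof is correct and follows essentially the same approach as the paper's: both iterate Lemma~\ref{classical_iterated} to conclude that $m$ applications of the level-$k$ model reach level $k^m$, whence $k^m\ge N$ gives $m\ge\lceil\ln N/\ln k\rceil$. Your version is simply a more carefully formalized induction (with the explicit bound $\lceil n/k\rceil\le k^m$ when $n\le k^{m+1}$ and the remark on the invisibility of $\ell_\emptyset$ under increments), whereas the paper dispatches the corollary in three sentences.
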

\begin{proof}
Applying the level-\(k\) signature model  on the signature path recovered from a signature model of level-\(k\), we recover such a model of level-\(k^2\) by Lemma ~\eqref{classical_iterated}. Repeating this process \(m\) times gives signature model of level-\(k^m\). We want \(k^m \ge N\) which gives \(m \ge  \lceil \frac{\ln N}{\ln k} \rceil\) whenever \(k>1\).
\end{proof}
 A direct analogue of Lemma ~\eqref{classical_iterated} (which relies on the shuffle product algebra) does not hold for the branched signature model. The shuffle property, fundamental to the classical signature, breaks down for the non-geometric rough paths captured by the branched signature. Consequently, composing branched signature operators behaves differently. While applying a level-$k$ \emph{classical} signature operator twice effectively creates a level-$k^2$ feature set, applying a level-$k$ \emph{branched} signature operator to the path generated by another level-$k$ branched operator does not necessarily replicate all features up to level $k^2$. Instead, due to the non-geometric nature and the specific algebraic structure (related to the Connes-Kreimer Hopf algebra), such composition primarily extends the captured dependencies incrementally, one each time. 
 
 Reproducing the result analogous to Lemma ~\eqref{classical_iterated} is not possible because branched signatures do not satisfy shuffle property. For simplification, avoiding to be too complex,  we restrict level of the the branched signature to be \(2\). The reason is to make the computation and overall complexity to be as small as possible. The following result formalizes this approach and shows additive nature of composition in the branched setting, contrasting sharply with the multiplicative effect seen in the classical case.

\begin{lem} \label{branched_iterated}
Let \(\bX: [0, T] \to \RR^d\) be a path for which some notion of branched rough path exists. Let \(k = N-1\) be an integer and \(m = 2\). Define the path \(\Phi_t\) by \(\Phi_t := \mathbf{M}^k_\ell(\mathbf X)_{st}\) i.e., the branched signature model truncated to level \(k\). Then, every component of the level-\(N\) branched signature model \(\mathbf{M}^N_\ell(\mathbf X)_{st}\) can be recovered by the level-\(m\) branched signature applied to \(\Phi_t\).

Consequently, any level-\(N\) branched signature model of the path \(\bX\) can be exactly replicated by a level-\(m\) branched signature model of the path \(\Phi_t\).
\end{lem}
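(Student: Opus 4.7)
The plan is to mirror the proof of Lemma~\ref{classical_iterated}, but with the shuffle identity (which fails for branched signatures) replaced by the two structural features that survive in the non-geometric setting: the character property $\langle\bsig(\bX)_{st},\tau_1\cdot\tau_2\rangle=\langle\bsig(\bX)_{st},\tau_1\rangle\langle\bsig(\bX)_{st},\tau_2\rangle$ and the integration (tree-growth) identity $\int_s^t\langle\bsig(\bX)_{sr},\mu\rangle\,d\bX^a_r=\langle\bsig(\bX)_{st},[\mu]_a\rangle$ that comes directly from the recursive definition of the branched signature. Because $\Phi$ is a one-dimensional path, the only non-trivial indexing elements of its level-$2$ branched signature are $\Forest{decorated[1]}$, the forest $\Forest{decorated[1]}\Forest{decorated[1]}$, and the ladder $\Forest{decorated[1[1]]}$, so I would start by writing
\[
\mathbf{M}^2_{\ell'}(\Phi)_{st}=\ell'_{\mathbf{1}}+\ell'_{\Forest{decorated[1]}}(\Phi_t-\Phi_s)+\ell'_{\Forest{decorated[1]}\Forest{decorated[1]}}(\Phi_t-\Phi_s)^2+\ell'_{\Forest{decorated[1[1]]}}\int_s^t(\Phi_u-\Phi_s)\,d\Phi_u,
\]
and then substitute $\Phi_u-\Phi_s=\sum_{1\le|\tau|\le N-1}\ell_\tau\langle\bsig(\bX)_{su},\tau\rangle$ into each term.

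Next I would expand the three non-trivial summands. The character property immediately yields
\[
(\Phi_t-\Phi_s)^2=\sum_{\tau_1,\tau_2}\ell_{\tau_1}\ell_{\tau_2}\,\langle\bsig(\bX)_{st},\,\tau_1\cdot\tau_2\rangle,
\]
which generates every forest $\tau_1\cdot\tau_2$ of total degree at most $2(N-1)$. For the ladder contribution I would compute $d\Phi_u=\sum_\rho\ell_\rho\,d\langle\bsig(\bX)_{su},\rho\rangle$; when $\rho=[\rho']_a$ is a tree, the recursive definition gives $d\langle\bsig,\rho\rangle=\langle\bsig,\rho'\rangle d\bX^a_u$, and when $\rho$ is a genuine product of trees, applying Leibniz to the character property reduces $d\langle\bsig,\rho\rangle$ to a sum of such single-tree expressions. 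The tree-growth identity then rewrites $\int_s^t(\Phi_u-\Phi_s)\,d\Phi_u$ as a linear combination of $\langle\bsig(\bX)_{st},[\sigma\cdot\rho_{(*)}]_a\rangle$ for trees of degree at most $2(N-1)$.

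To finish I would verify coverage of every $\tau\in\hopf$ with $1\le|\tau|\le N$. A single node $\Forest{decorated[a]}$ arises from the linear term. Any tree $\tau=[\mu]_a$ with $|\tau|\le N$ appears in the ladder expansion through the choice $\sigma:=\mu$ and $\rho:=\Forest{decorated[a]}$, which is legitimate because $|\mu|=|\tau|-1\le N-1$ and $|\rho|=1\le N-1$. For a multi-tree forest $\mu$ with $|\mu|\le N$, splitting off any one tree factor yields a decomposition $\mu=\eta_1\cdot\eta_2$ with $|\eta_i|\ge 1$ and $|\eta_i|\le N-1$, so $\mu$ is produced by the quadratic term. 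Matching the coefficients of each $\langle\bsig(\bX)_{st},\tau\rangle$ then gives an explicit linear system determining the admissible $\ell'$ from $\ell$ and the target level-$N$ coefficients, which proves both assertions of the lemma.

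The principal obstacle I anticipate is analytic rather than combinatorial: the tree-growth identity and the Leibniz expansion $d\langle\bsig(\bX),\rho_1\cdot\rho_2\rangle=\langle\bsig,\rho_1\rangle\,d\langle\bsig,\rho_2\rangle+\langle\bsig,\rho_2\rangle\,d\langle\bsig,\rho_1\rangle$ are classical for bounded-variation paths but, for a genuine $\alpha$-H\"older branched rough path, must be interpreted through the rough-integral definition of integration against $\bsig(\bX)$ as in \cite{gubinelli2010ramification}. A secondary bookkeeping issue is that several distinct decompositions $(\sigma,\rho)$, and likewise $(\eta_1,\eta_2)$, may give rise to the same $\tau$; the resulting multiplicities contribute explicit combinatorial factors to the coefficients of the linear system above, but do not affect the reachability argument.
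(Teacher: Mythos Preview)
Your proposal is correct and follows the same basic strategy as the paper: expand the level-$2$ branched signature of the scalar path $\Phi$ and use the tree-growth identity $\int_s^t\langle\bsig(\bX)_{su},\mu\rangle\,d\bX^a_u=\langle\bsig(\bX)_{st},[\mu]_a\rangle$ to show that the ladder term produces trees one root deeper than the inputs, thereby reaching degree $N=k+1$.

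The paper's own proof is considerably sketchier than yours. It restricts attention to trees only (explicitly writing ``for now we will restrict ourselves to trees only not the forests as forest will involve the product terms''), picks a single highest-degree summand $\langle\bsig(\bX),\tau_1\rangle$ with $\tau_1=[h_1\cdots h_p]_{r_1}$, and computes the ladder coordinate of $\Phi$ directly as $\langle\bsig(\bX),[\tau_1 h_1\cdots h_p]_{r_1}\rangle$, concluding that one additional root level is gained. You go further: you also handle the forest case via the character identity applied to $(\Phi_t-\Phi_s)^2$, and you make the coverage argument explicit by exhibiting, for every target $\tau$ with $|\tau|\le N$, a concrete pair $(\sigma,\rho)$ or $(\eta_1,\eta_2)$ of degree $\le N-1$ that produces it. Your remarks on the analytic interpretation of $d\langle\bsig(\bX),\rho\rangle$ for genuinely rough $\bX$, and on the combinatorial multiplicities in the resulting linear system, are also absent from the paper. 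In short, your argument subsumes the paper's and fills in the forest case that the paper leaves aside.
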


\begin{proof}
The path \(\Phi_t\) identified by the branched signature model truncated to level \(k\) is given as
\begin{equation*}
\Phi_t := \mathbf{M}^N_\ell(\mathbf X)_{st}
\;=\;\ell_{\mathbf 1}+\sum_{1\le|\tau_1|\le k}\ell_\tau\,\big\langle \bsig(\mathbf X)_{st},\,\tau_1\big\rangle,
\end{equation*}
where \(\tau_1\) is a rooted forest of degree at most \(k\). For now we will restrict ourselves to trees only not the forests as forest will involve the product terms. Consider a rooted tree \(\tau_2\) of degree at most \(m\). Since every tree is constructed recursively, so consider \(\tau_1 = [h_1 \cdots h_p]_{r_1}\) and \(\tau_2 = [l_1 \cdots l_q]_{r_2}\) with \(|h_1 \cdots h_n| = k-1\) and \(|l_1 \cdots l_n| = m-1\) and \(r_1\) and \(r_2\) are roots of the tree \(\tau_1\) and \(\tau_2\) respectively.

The component of the branched signature of level \(m\) of \(\Phi_t\) is denoted by \(\innerprod{\bsig(\Phi_t)_{st}, \tau_2}\) with \(|\tau_2| \le m\) and is given as
\begin{equation} \label{bsig level 2}
\innerprod{\bsig(\Phi_t)_{st}, \tau_2} = \int_s^t \innerprod{\bsig(\Phi_u)_{su}, l_1 \cdots l_q} d\Phi_{u}^{\Forest{decorate[r_2]}}.
\end{equation}
If we pick the highest degree component from \(\bsig(\Phi_t)_{st}\) say \(\big\langle\bsig(\mathbf X)_{st},\,\tau_1\big\rangle\) with \(|\tau_1| \le k\) and set  \(\Phi(t) = \innerprod{\bsig(\bX)_{st}, \tau_1}\), then \(d\Phi(t) = \innerprod{\bsig(\bX)_{st}, h_1 \cdots h_p} d\bX_t^{\Forest{decorate[r_1]}}\). Now, since \(m=2\), so let us take \(\tau_2 = \Forest{decorated[j[i]]}\) with both \(i,j=1\) as the path \(\Phi_t\) is only one dimensional. Using equation~\eqref{bsig level 2}, we get
\begin{align*}
\innerprod{\bsig(\Phi_t)_{st}, \Forest{decorated[j[i]]}} & = \int_s^t \innerprod{\bsig(\Phi_u)_{su}, \Forest{decorated[i]}} d\Phi_{u}^{\Forest{decorate[j]}}\\
& = \int_s^t \innerprod{\bsig(\bX)_{su}, \tau_1} \innerprod{\bsig(\bX)_{su}, h_1 \cdots h_p} d\bX_u^{\Forest{decorate[r_1]}}\\
& = \innerprod{\bsig(\bX)_{st}, [\tau_1 h_1 \cdots h_p]_{r_1}}.
\end{align*}
Which is just an extra root \(r_1\) introduced in the branched signature of the underlying path \(\bX\). Therefore, applying level-\(m\) branched signature on the branched signature path generated by level-\(k\) branched signature model only gives information up to level-\((k+1)\) which is \(N\) in this case. 
\end{proof}
\begin{cor}
For a given a path \(\bX: [0, T] \to \RR^d\), level-\(N\) branched signature model \(\mathbf{M}^N_\ell(\mathbf X)_{st}\) can be exactly replicated by applying level-\(2\) signature model \(N-1\)-times i.e.,
\[
\mathbf{M}^N_\ell(\mathbf X)_{st}= \mathbf{M}^2_{\ell^{(N-1)}}\left(\mathbf{M}^2_{\ell^{(N-2)}}\left(\dots \mathbf{M}^2_{\ell^{(1)}}\left(\mathbf X\right)_{st}\right)_{st}\right)_{st}.
\]
\end{cor}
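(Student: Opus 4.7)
The plan is to argue by induction on $N\ge 2$. The base case $N=2$ is immediate: the right-hand side with a single iteration is literally $\mathbf{M}^2_{\ell^{(1)}}(\bX)_{st}$, which matches the left-hand side by taking $\ell^{(1)}=\ell$.

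For the inductive step, suppose the result holds at level $N-1$. First I would invoke Lemma \ref{branched_iterated} with $k=N-1$ and $m=2$: every tree component $\langle\bsig(\bX)_{st},\tau\rangle$ with $|\tau|\le N$ is realized as a level-$2$ branched signature component $\langle\bsig(\Phi)_{st},\tau_2\rangle$ of the one-dimensional path
\[
\Phi_t \;:=\; \mathbf{M}^{N-1}_{\tilde\ell}(\bX)_{st}
\]
for a suitable coefficient vector $\tilde\ell\in\hopf_{\le N-1}$. Product (forest) terms $\tau_1\tau_2$ appearing in the definition of $\mathbf{M}^N_\ell$ are accommodated by the character property $\langle\bsig(\bX)_{st},\tau_1\tau_2\rangle=\langle\bsig(\bX)_{st},\tau_1\rangle\langle\bsig(\bX)_{st},\tau_2\rangle$; since $\Phi_t$ is scalar-valued, such products reappear automatically through the higher-order components in the definition of $\mathbf{M}^2_{\ell^{(N-1)}}(\Phi)_{st}$. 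Collecting these identifications produces coefficients $\ell^{(N-1)}\in\hopf_{\le 2}$ such that
\[
\mathbf{M}^N_\ell(\bX)_{st} \;=\; \mathbf{M}^2_{\ell^{(N-1)}}(\Phi)_{st}.
\]

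Next, I would apply the induction hypothesis to $\Phi_t=\mathbf{M}^{N-1}_{\tilde\ell}(\bX)_{st}$: it can be written as an $(N-2)$-fold composition
\[
\Phi_t \;=\; \mathbf{M}^2_{\ell^{(N-2)}}\!\Bigl(\mathbf{M}^2_{\ell^{(N-3)}}\bigl(\cdots \mathbf{M}^2_{\ell^{(1)}}(\bX)_{st}\cdots\bigr)_{st}\Bigr)_{st}.
\]
Substituting this into the outer level-$2$ model gives exactly the required $N-1$-fold composition, completing the induction.

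The step I expect to be the main obstacle is the explicit coefficient bookkeeping in the reduction $\mathbf{M}^N_\ell=\mathbf{M}^2_{\ell^{(N-1)}}\circ\mathbf{M}^{N-1}_{\tilde\ell}$. Lemma \ref{branched_iterated} only treats single trees, while $\mathbf{M}^N_\ell$ involves arbitrary forest coefficients; matching these requires simultaneously tuning the inner $\tilde\ell$ and the outer $\ell^{(N-1)}$ so that, via the multiplicativity of the character on $\hopf$ combined with the binomial expansion of powers of the scalar path $\Phi$, every prescribed forest-indexed coefficient of $\mathbf{M}^N_\ell$ is reproduced. The difficulty is purely combinatorial rather than analytic, and once the book-keeping is settled the induction closes without invoking any algebraic identity beyond those already used in Lemma \ref{branched_iterated}.
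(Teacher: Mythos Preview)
Your proposal is correct and follows essentially the same approach as the paper: both rely on iterating Lemma~\ref{branched_iterated} (each application of a level-$2$ model to a level-$k$ model yields level $k+1$), with your argument phrased as a formal top-down induction while the paper just iterates forward from level $2$ to level $N$. Your explicit attention to the forest/coefficient bookkeeping is in fact more careful than the paper's one-line sketch, which glosses over that point entirely.
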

\begin{proof}
Applying branched signature model of level-\(2\) on the path recovered from a signature model of level-\(2\), we recover such a model of level-\(2+1=3\) by Lemma.~\eqref{branched_iterated}. Repeating this process \(N-1\) times gives signature model of level-\(N-1+1=N\).
\end{proof}
Computing a level-\(N\) signature in \(d\) dimensions scales as \(O(d^{N})\), which is computationally intractable when \(d\) is large. By contrast, a level-\(2\) signature costs \(O(d^{2})\). The lemma shows we can recover the level-\(N\) model by iterating a level-\(2\) signature on the model path: after each step, the model is effectively one-dimensional (update cost \(O(d)\)). Repeating this \(N\!-\!1\) times keeps the dependence on \(d\) quadratic i.e., \(O(d^{2})\) rather than \(O(d^{N})\), making the approach especially effective for very high-dimensional data streams.

The following version of universal approximation theorem extends the universal approximation theorem for branched rough paths ~\eqref{thm:UAT-branched} to this layer-wise application of branched signature model with learnable parameters.

\begin{thm}
Let \(\bX: [0,T] \to \RR^{d+1}\) be the time-extended path such that \(d\)-dimensional path without time component \(\bX^-\) is \(\alpha\)-H\"older i.e., $\bX^- \in C^\alpha([0,T];\RR^d)$. Let  \(\mathbf{M}^2_\ell(\mathbf X)_{st}\) be the level-2 branched signature model. Set \(s=0\) and  Let 
\(\mathbf{M}^{\circ m}_\ell(\mathbf X)_{0t}= \mathbf{M}^2_{\ell^{(m)}}\left(\mathbf{M}^2_{\ell^{(m-1)}}\left(\dots \mathbf{M}^2_{\ell^{(1)}}\left(\mathbf X\right)_{0t}\right)_{0t}\right)_{0t}\) be the application of branched signature model \(m\) times. Define \(\mathcal S^{(p)}\) as before
\[
\mathcal S^{(p)} \;:=\; \Big\{\, \big(\bsig^{p}(\bX)_{t}\big)_{t\in[0,T]}\;:\; \bX^- \in C^\alpha([0,T];\RR^d)\,\Big\}
\;\subset\; C \ \big([0,T],\,\BB^{(p)}(\RR^{1+d})\big).
\]
Let $\mathcal{H}\subset \mathcal S^{(p)}$ be compact and $f:\mathcal{H}\to\RR$ continuous. Then for every $\varepsilon>0$ there exists \(\ell\) such that
\[
\sup_{\,(\bsig^{p}(\bX)_{t})_{t\in[0,T]}\in \mathcal{H}}
\Big|\, f\big((\bsig^{p}(\bX)_{t})_{t\in[0,T]}\big)
\;-\; \mathbf{M}^{\circ m}_\ell(\mathbf X)_{0T} \,\Big| \;<\; \varepsilon,
\]
where \(m\) is sufficiently large and \(\ell = \{\ell^{(1)}, \dots, \ell^{(m)}\}\) is the set of learnable parameters with \(\ell^{(1)}\) to be the parameters for the first model, \(\ell^{(2)}\) for the second and so on.
\end{thm}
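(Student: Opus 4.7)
The plan is to reduce the theorem to a chain of two already-established results: the universal approximation theorem for branched signatures (Theorem~\ref{thm:UAT-branched}) and the iterative replication result (the corollary following Lemma~\ref{branched_iterated}). The high-level idea is that UAT produces a single linear functional of the branched signature that approximates $f$ within $\varepsilon$; this functional is itself a level-$N$ branched signature model for some finite $N$; and every such level-$N$ model can be re-expressed as an $(N-1)$-fold composition of level-$2$ models with appropriately chosen learnable coefficients.

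First, I would apply Theorem~\ref{thm:UAT-branched} to the given compact set $\mathcal H\subset \mathcal S^{(p)}$ and the continuous function $f:\mathcal H\to\RR$: for the given $\varepsilon>0$ there exists $h\in\hopf$ with
\[
\sup_{(\bsig^{p}(\bX)_t)_{t\in[0,T]}\in\mathcal H}
\Big|\,f\bigl((\bsig^{p}(\bX)_t)_{t\in[0,T]}\bigr)-\langle \bsig(\bX)_T,\,h\rangle\,\Big|<\varepsilon .
\]
Since $h$ is a finite linear combination of rooted forests, it lies in $\hopf_{\le N}$ for some $N\in\NN$, so $\langle \bsig(\bX)_T,h\rangle$ is exactly a level-$N$ branched signature model in the sense of Definition~\ref{def:branched-model}, namely $\mathbf M^{N}_{h}(\bX)_{0T}$.

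Second, I would invoke the corollary of Lemma~\ref{branched_iterated} to exactly replicate this level-$N$ model by the $(N-1)$-fold composition of level-$2$ branched signature models. That is, setting $m\ge N-1$, there exist learnable coefficient vectors $\ell^{(1)},\dots,\ell^{(m)}$ such that
\[
\mathbf M^{N}_{h}(\bX)_{0T}
=\mathbf M^{\circ m}_{\ell}(\bX)_{0T}, \qquad \ell=\{\ell^{(1)},\dots,\ell^{(m)}\}.
\]
Substituting this identity back into the UAT bound yields the conclusion: for $m$ sufficiently large and a suitable choice of $\ell$, the iterated-composition model approximates $f$ uniformly on $\mathcal H$ within $\varepsilon$.

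The main obstacle is making the second step watertight. Lemma~\ref{branched_iterated} shows, at the level of a single composition, that feeding a level-$k$ model into another layer-$2$ branched signature produces one extra root vertex on top of tree components of the inner model. Iterating this to realize an arbitrary linear combination of level-$N$ forests requires an inductive argument in the depth of the stack: at the $k$-th layer, $\ell^{(k)}$ must simultaneously carry the already-assembled coefficients of lower-degree forests forward (so they remain accessible to the final linear read-out) and encode the new depth-one extensions needed to reach the next tree stratum. Provided the coefficients are treated as freely learnable parameters at every layer, this bookkeeping can be done inductively, and combining the exact replication with the UAT closes the argument.
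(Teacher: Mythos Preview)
Your proposal is correct and follows essentially the same two-step approach as the paper: first invoke Theorem~\ref{thm:UAT-branched} to obtain an $h\in\hopf$ whose pairing $\langle\bsig(\bX)_T,h\rangle$ approximates $f$ within $\varepsilon$, then use Lemma~\ref{branched_iterated} (and its corollary) to rewrite this level-$N$ branched signature model as an iterated composition of level-$2$ models. Your additional commentary on the inductive bookkeeping needed to make the replication step fully rigorous is in fact more careful than the paper's own treatment, which simply asserts the identity without spelling out how the layer-wise coefficients are chosen.
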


\begin{proof}
Using Theorem ~\eqref{thm:UAT-branched} with the usual notation and let $\mathcal{H}\subset \mathcal S^{(p)}$ be compact and $f:\mathcal{H}\to\RR$ continuous. Then for every $\varepsilon>0$ there exists $h \in \hopf$ such that
\[
\sup_{\,(\bsig^{p}(\bX)_{t})_{t\in[0,T]}\in \mathcal{H}}
\Big|\, f\big((\bsig^{p}(\bX)_{t})_{t\in[0,T]}\big)
\;-\; \langle \bsig(\bX)_{T},\, h \rangle \,\Big| \;<\; \varepsilon.
\]
By Lemma ~\eqref{branched_iterated} every component of the branched signature of some higher level can be replicated by iteratively applying branched signature model of level-\(2\). Therefore,
\[\langle \bsig(\bX)_{T},\, h \rangle= \mathbf{M}^2_{\ell^{(m)}}\left(\mathbf{M}^2_{\ell^{(m-1)}}\left(\dots \mathbf{M}^2_{\ell^{(1)}}\left(\mathbf X\right)_{0T}\right)_{0T}\right)_{0T}.\]
When \(m\) is sufficiently large. Then for any $h \in \hopf$, \(\langle \bsig(\bX)_{T},\, h \rangle\) can be recovered by learning the parameters \(\ell^{(1)}, \dots, \ell^{(m)}\). Hence, the claim follows from this.
\end{proof}

From a practical standpoint, there is no existing library for computing the branched signature of an arbitrary path, because the nature of the driving signal (and hence the appropriate integration rule) is typically unknown. When the underlying path is Brownian motion, the It\^o iterated integrals do induce a branched signature; for a general path, this construction is not available. To address this, we follow the extension principle of Hairer–Kelly \cite{hairer2015geometric}, recalled in Theorem~\ref{geo}: extend the observed data \(\bX\) to a higher-dimensional process \(\bar{\bX}\), and then apply low-order classical signature models iteratively. The next result formalizes this idea.

\begin{thm}
Let \(\bX: [0,T] \to \RR^{d+1}\) be the time-extended path such that \(d\)-dimensional path without time component \(\bX^-\) is \(\alpha\)-H\"older i.e., $\bX^- \in C^\alpha([0,T];\RR^d)$. Let \(\bar{\bX}\) be the extended path such that   
\(\innerprod{\bsig(\bX), \tau} = \innerprod{\sig(\Bar{\bX}^),\Psi(\tau)}\) for all \(\tau \in \hopf\). Let  \(\mathbf{S}^k_\ell(\Bar{\bX})_{st}\) be the level-\(k\) classical signature model applied to the extended path \(\Bar{\bX}\). Set \(s=0\) and  Let 
\(\mathbf{S}^{\circ m}_\ell(\mathbf X)_{0t}= \mathbf{S}^k_{\ell^{(m)}}\left(\mathbf{S}^k_{\ell^{(m-1)}}\left(\dots \mathbf{S}^k_{\ell^{(1)}}\left(\mathbf X\right)_{0t}\right)_{0t}\right)_{0t}\) be the application of signature model \(m\) times. Let the dimension of \(\Bar{\bX}\) be \(\tilde{d}\) and \(\tilde{\cS}\) be the alphabet set over \(\Bar{\bX}\). Let \(\mathbb{G}^{p}(\mathbb R^{\tilde{d}})\) be the step-\(p\) nilpotent Lie group over the alphabet \(\tilde{\cS}\). Define \(\mathcal S^{(p)}\) as before
\[
\mathcal S^{(p)} \;:=\; \Big\{\, \big(\bsig^{p}(\bX)_{t}\big)_{t\in[0,T]}\;:\; \bX^- \in C^\alpha([0,T];\RR^d)\,\Big\}
\;\subset\; C \ \big([0,T],\,\BB^{(p)}(\RR^{1+d})\big).
\]
Let $\mathcal{H}\subset \mathcal S^{(p)}$ be compact and $f:\mathcal{H}\to\RR$ continuous. Then for every $\varepsilon>0$ there exists $\ell$ such that
\[
\sup_{\,(\bsig^{p}(\bX)_{t})_{t\in[0,T]}\in \mathcal{H}}
\Big|\, f\big((\bsig^{p}(\bX)_{t})_{t\in[0,T]}\big)
\;-\; \mathbf{S}^{\circ m}_\ell(\Bar{\bX})_{0T} \,\Big| \;<\; \varepsilon,
\]
where \(m\) is sufficiently large and \(\ell = \{\ell^{(1)}, \dots, \ell^{(m)}\}\) is the set of learnable parameters with \(\ell^{(1)}\) to be the parameters for the first model, \(\ell^{(2)}\) for the second and so on.
\end{thm}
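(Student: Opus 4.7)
The plan is to combine three ingredients already established above: Corollary \ref{cor for UAT}, which realizes the branched universal approximation through classical signatures of the extended path; Theorem \ref{geo}, which guarantees the existence of $\Bar{\bX}$ satisfying $\langle \bsig(\bX)_T, h\rangle = \langle \sig(\Bar{\bX})_T, \Psi(h)\rangle$ for every $h\in\hopf$; and the corollary to Lemma \ref{classical_iterated}, which replicates a level-$N$ classical signature model by an $m$-fold composition of level-$k$ signature models.

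Given $\varepsilon>0$ and continuous $f$ on the compact set $\mathcal H\subset\mathcal S^{(p)}$, I first apply Corollary \ref{cor for UAT} to obtain $h\in\hopf$ with
\[
\sup_{(\bsig^p(\bX)_t)_{t\in[0,T]}\in\mathcal H}\big|\,f\big((\bsig^p(\bX)_t)_{t\in[0,T]}\big)-\langle \sig(\Bar{\bX})_T,\Psi(h)\rangle\,\big|<\varepsilon.
\]
Since $h$ is a finite $\RR$-linear combination of decorated trees, $\Psi(h)$ is a finite linear combination of tensor words in the alphabet $\tilde{\cS}$ indexing the coordinates of $\Bar{\bX}$. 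Letting $N_0$ denote the maximum such word length, unfolding the pairing yields
\[
\langle \sig(\Bar{\bX})_T,\Psi(h)\rangle=\mathbf{S}^{N_0}_{\tilde{\ell}}(\Bar{\bX})_{0T},
\]
for a coefficient vector $\tilde{\ell}$ read off directly from $\Psi(h)$.

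Next, for any $m\ge\lceil \ln N_0/\ln k\rceil$, the corollary to Lemma \ref{classical_iterated} delivers the \emph{exact} identity
\[
\mathbf{S}^{N_0}_{\tilde{\ell}}(\Bar{\bX})_{0T}=\mathbf{S}^{k}_{\ell^{(m)}}\!\left(\mathbf{S}^{k}_{\ell^{(m-1)}}\!\left(\cdots \mathbf{S}^{k}_{\ell^{(1)}}(\Bar{\bX})_{0T}\cdots\right)_{0T}\right)_{0T}=\mathbf{S}^{\circ m}_\ell(\Bar{\bX})_{0T},
\]
with coefficients $\ell^{(1)},\dots,\ell^{(m)}$ obtained from $\tilde{\ell}$ through the shuffle identities; the equality is exact because $\sig(\Bar{\bX})$ is geometric by Theorem \ref{geo}, so the shuffle relations underpinning Lemma \ref{classical_iterated} are available. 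Combining the two steps yields the claim.

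The main technical hurdle is the bookkeeping for the iterated composition: the first iterate $\mathbf{S}^k_{\ell^{(1)}}(\Bar{\bX})_{0t}$ is a scalar-valued path, and each subsequent iterate acts on a one-dimensional input, so $\ell^{(1)}$ must absorb the entire $\tilde{d}$-dimensional information of $\Bar{\bX}$ while the later $\ell^{(i)}$ live in a much smaller parameter space. Verifying that the recursive coefficient extraction in Lemma \ref{classical_iterated} accommodates this asymmetry, and that the phrase ``$m$ sufficiently large'' is made precise by $k^m\ge N_0$, is the step that must be written out with care; regularity of $\Bar{\bX}$ and transfer of compactness from $\mathcal H$ follow automatically from Theorem \ref{geo} and continuity of each $\mathbf{S}^k_{\ell^{(i)}}$ in its input.
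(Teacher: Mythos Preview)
Your proposal is correct and follows essentially the same route as the paper: invoke Corollary~\ref{cor for UAT} to approximate $f$ by $\langle \sig(\Bar{\bX})_T,\Psi(h)\rangle$, then use the corollary to Lemma~\ref{classical_iterated} to replicate this classical signature functional by an $m$-fold composition of level-$k$ signature models. Your version is in fact slightly more explicit (naming $N_0$ and the bound $k^m\ge N_0$, and noting that geometricity of $\sig(\Bar{\bX})$ from Theorem~\ref{geo} is what licenses the shuffle identities), whereas the paper simply quotes the two results and concludes.
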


\begin{proof}
Using Corollary ~\eqref{cor for UAT} with the usual notation and let $\mathcal{H}\subset \mathcal S^{(p)}$ be compact and $f:\mathcal{H}\to\RR$ continuous. Then for every $\varepsilon>0$, there exists $h \in \hopf$ such that
\[ \sup_{\,(\bsig^{p}(\bX)_{t})_{t\in[0,T]}\in \mathcal{H}}
\Big|\, f\big((\bsig^{p}(\bX)_{t})_{t\in[0,T]}\big)
\;-\; \langle \sig(\Bar{\bX})_{T},\, \Psi(h) \rangle \,\Big| \;<\; \varepsilon. \]
By Lemma ~\eqref{classical_iterated}, every component of the classical signature of some higher level can be replicated by iteratively applying classical signature model of lower level, e.g., \(k\). Therefore,
\[\langle \sig(\Bar{\bX})_{T},\, \Psi(h) \rangle= \mathbf{S}^k_{\ell^{(m)}}\left(\mathbf{S}^k_{\ell^{(m-1)}}\left(\dots \mathbf{S}^k_{\ell^{(1)}}\left(\Bar{\bX}\right)_{0T}\right)_{0T}\right)_{0T}.\]
When \(m\) is sufficiently large. Then for any $h \in \hopf$, \(\langle \sig(\Bar{\bX})_{T},\, \Psi(h) \rangle\) can be recovered by learning the parameters \(\ell^{(1)}, \dots, \ell^{(m)}\). Hence, we conclude the claim and show the algorithm in Figure \eqref{fig1-intro}.
\end{proof}

\begin{center}
\begin{figure}[h]
\begin{tikzpicture}[
SIR/.style={rectangle, rounded corners,  draw=red!60, fill=red!5, very thick, minimum size=10mm},
]
\node[SIR]    (A1)                     {$\bX_{t}:$ Data};
\node[SIR]    (A2)       [right=of A1] {$\Bar{\bX}_{t}$};
\node[SIR]    (A3)       [right= 1.5cm of A2] {$\mathbf{S}_{\ell}^2(\Bar{\bX})_{0t}$};
\node[SIR]    (A4)       [right= 1.5cm of A3] {$\mathbf{S}^2_{\ell^{(m)}}\left(\mathbf{S}^2_{\ell^{(m-1)}}\left(\dots \mathbf{S}^2_{\ell^{(1)}}\left(\bar{\bX}\right)_{0t}\right)_{0t}\right)_{0t}$};
\draw[->, thick] (A1.east)  to node[above] {$\Psi$} (A2.west);
\draw[->, thick] (A2.east)  to node[above] {$\mathbf{S}_{\ell}^2(\cdot)$} (A3.west);
\draw[->, thick, dashed] (A3.east)  to node[above] {$\mathbf{S}_{\ell}^2(\cdot)$} (A4.west);
\end{tikzpicture}
\caption{Application of Level 2 signature model on extended path $\Bar{\bX}$}
\label{fig1-intro}
\end{figure}
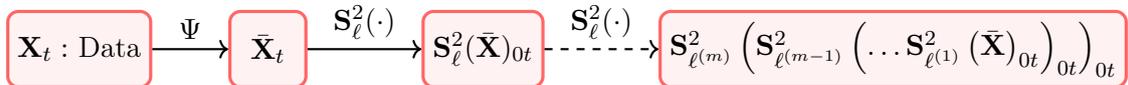
\end{center}

\section{Construction of the extended path} \label{section extension}
In this section, we present a systematic procedure for constructing the extended path \(\Bar{\bX}\). For an \(\alpha\)-H\"older path \(\bX:[0,T]\to\RR^d\), we develop two complementary routes: (i) an analytic specification of integration rules that yields a non-geometric (branched) rough-path enhancement, and (ii) a non-analytic, data-driven construction learned via a neural network. The subsections below treat each approach in turn.
\subsection{Explicit construction}
Let \(\bX:[0,T]\to\RR^d\) be an \(\alpha\)-H\"older path with \(\alpha\in\bigl(\tfrac{1}{4},\tfrac{1}{3}\bigr]\).
Set \(N:=\lfloor 1/\alpha\rfloor=3\); thus only components up to order \(3\) are relevant. We construct the extended path \(\Bar{\bX}\) using solely the Hairer--Kelly morphism i.e., fix the Hopf algebra morphism $\Psi: (\hopf, \cdot, \Delta) \to (T(\mathcal{V}), \shuffle, \Bar{\Delta})$ and define \(\Bar{\bX}\) so that
\[
\big\langle \sig(\Bar{\bX})_{st},\,\psi(h)\big\rangle
=\big\langle \bsig(\bX)_{st},\,h\big\rangle
,\qquad\text{for all rooted trees } h \text{ with }|h|\le 3.
\]
In words, \(\Bar{\bX}\) is chosen so that its geometric iterated integrals coincide with the \(\psi\)-image of the branched signature of \(\bX\) up to level \(3\). We now describe the resulting level-1, level-2, and level-3 coordinates.

To begin with, for level-1,  let $h = \Forest{decorated[a]}$, where \(a \in \cS\)-the alphabet set. Using the definition of the \(\Psi\) we have $\Psi(\Forest{decorated[a]}) = \Forest{decorated[a]}$ i.e., \(\innerprod{\bsig(\bX)_{st}, \Forest{decorated[a]}} = \innerprod{\sig(\Bar{\bX})_{st},\Forest{decorated[a]}}\), which gives \(\Bar{\bX}^{\Forest{decorated[a]}}_t - \Bar{\bX}^{\Forest{decorated[a]}}_s = \bX^{\Forest{decorated[a]}}_t - \bX^{\Forest{decorated[a]}}_s\). Similarly, for $h = \Forest{decorated[b[a]]}$, $\Psi(\Forest{decorated[b[a]]}) = \Forest{decorated[b[a]]} + \Forest{decorated[a]} \otimes \Forest{decorated[b]}$ i.e., \(\innerprod{\bsig(\bX)_{st},\Forest{decorated[b[a]]}} = \innerprod{\sig(\Bar{\bX})_{st}, \Psi(\Forest{decorated[b[a]]})} = \innerprod{\sig(\Bar{\bX})_{st}, \Forest{decorated[b[a]]} + \Forest{decorated[a]} \otimes \Forest{decorated[b]}}\), which gives
\begin{equation*}
\Bar{\bX}^{\Forest{decorated[b[a]]}}_t - \Bar{\bX}^{\Forest{decorated[b[a]]}}_s = \int_s^t  \int_s^{r_2} d\bX_{r_1}^{\Forest{decorated[a]}}d\bX_{r_2}^{\Forest{decorated[b]}} - \int_s^t  \int_s^{r_2} d\Bar{\bX}_{r_1}^{\Forest{decorated[a]}}d\Bar{\bX}_{r_2}^{\Forest{decorated[b]}}.
\end{equation*}
Now, for $h = \Forest{decorated[c[a][b]]}$, \(\Psi(\Forest{decorated[c[a][b]]}) = \Forest{decorated[c[a][b]]} + \Forest{decorated[a]} \otimes \Forest{decorated[c[b]]} + \Forest{decorated[b]} \otimes \Forest{decorated[c[a]]} + \Forest{decorated[a]} \otimes\Forest{decorated[b]} \otimes\Forest{decorated[c]} + \Forest{decorated[b]} \otimes\Forest{decorated[a]} \otimes\Forest{decorated[c]}\), that is
\begin{equation*}
\innerprod{\bsig(\bX)_{st},\Forest{decorated[c[a][b]]}} = \innerprod{\sig(\Bar{\bX})_{st}, \Psi(\Forest{decorated[c[a][b]]})} = \innerprod{\sig(\Bar{X})_{st}, \Forest{decorated[c[a][b]]} + \Forest{decorated[a]} \otimes \Forest{decorated[c[b]]} + \Forest{decorated[b]} \otimes \Forest{decorated[c[a]]} + \Forest{decorated[a]} \otimes \Forest{decorated[b]} \otimes \Forest{decorated[c]} + \Forest{decorated[b]} \otimes \Forest{decorated[a]} \otimes \Forest{decorated[c]}}.
\end{equation*}
This gives
\begin{align*}
\Bar{X}^{\Forest{decorated[c[a][b]]}}_t - \Bar{X}^{\Forest{decorated[c[a][b]]}}_s = & \int_s^t \left( \int_s^{r_2} dX_{r_1}^{\Forest{decorated[a]}}\right) \left(\int_s^{r_2} dX_{r_1}^{\Forest{decorated[b]}}\right) dX_{r_2}^{\Forest{decorated[c]}} - \int_s^t  \int_s^{r_2} d\Bar{X}_{r_1}^{\Forest{decorated[a]}}d\Bar{X}_{r_2}^{\Forest{decorated[c[b]]}} - \int_s^t  \int_s^{r_2} d\Bar{X}_{r_1}^{\Forest{decorated[b]}}d\Bar{X}_{r_2}^{\Forest{decorated[c[a]]}}\\
& - \int_s^t \int_s^{r_3} \int_s^{r_2} d\Bar{X}_{r_1}^{\Forest{decorated[a]}}d\Bar{X}_{r_2}^{\Forest{decorated[b]}} d\Bar{X}_{r_3}^{\Forest{decorated[c]}}  - \int_s^t \int_s^{r_3} \int_s^{r_2} d\Bar{X}_{r_1}^{\Forest{decorated[b]}}d\Bar{X}_{r_2}^{\Forest{decorated[a]}} d\Bar{X}_{r_3}^{\Forest{decorated[c]}}.
\end{align*}
Finally, for $h = \Forest{decorated[c[b[a]]]}$, \(\Psi(\Forest{decorated[c[b[a]]]}) = \Forest{decorated[c[b[a]]]} + \Forest{decorated[a]} \otimes \Forest{decorated[c[b]]} + \Forest{decorated[b[a]]} \otimes \Forest{decorated[c]} + \Forest{decorated[a]} \otimes\Forest{decorated[b]} \otimes\Forest{decorated[c]}\) i.e.,
\begin{equation*}
\innerprod{\bsig(\bX)_{st},\Forest{decorated[c[b[a]]]}} = \innerprod{\sig(\Bar{\bX})_{st}, \Psi(\Forest{decorated[c[b[a]]]})} = \innerprod{\sig(\Bar{\bX})_{st}, \Forest{decorated[c[b[a]]]} + \Forest{decorated[a]} \otimes \Forest{decorated[c[b]]} + \Forest{decorated[b[a]]} \otimes \Forest{decorated[c]} + \Forest{decorated[a]} \otimes\Forest{decorated[b]} \otimes\Forest{decorated[c]}}.
\end{equation*}
So the required component is
\begin{align*}
\Bar{\bX}^{\Forest{decorated[c[b[a]]]}}_t - \Bar{\bX}^{\Forest{decorated[c[b[a]]]}}_s = & \int_s^t \int_s^{r_3} \int_s^{r_2} d\bX_{r_1}^{\Forest{decorated[a]}}  d\bX_{r_2}^{\Forest{decorated[b]}} d\bX_{r_3}^{\Forest{decorated[c]}} - \int_s^t  \int_s^{r_2} d\Bar{\bX}_{r_1}^{\Forest{decorated[a]}}d\Bar{\bX}_{r_2}^{\Forest{decorated[c[b]]}} - \int_s^t  \int_s^{r_2} d\Bar{\bX}_{r_1}^{\Forest{decorated[b[a]]}}d\Bar{\bX}_{r_2}^{\Forest{decorated[c]}}\\
& - \int_s^t \int_s^{r_3} \int_s^{r_2} d\Bar{\bX}_{r_1}^{\Forest{decorated[a]}}d\Bar{\bX}_{r_2}^{\Forest{decorated[b]}} d\Bar{\bX}_{r_3}^{\Forest{decorated[c]}}.
\end{align*}
Hence the extended path $\Bar{\bX}$ is
\begin{equation*}
\Bar{\bX} = \left(\Bar{\bX}^{\Forest{decorated[a]}}, \Bar{\bX}^{\Forest{decorated[b[a]]}}, \Bar{\bX}^{\Forest{decorated[c[a][b]]]}}, \Bar{\bX}^{\Forest{decorated[c[b[a]]]}}\right)_{a,\,b,\,c \, \in \, \cS},
\end{equation*}
where the path components are constructed explicitly. Next, we will give a couple of examples where this explicit construction works.
\subsubsection{Multi-dimensional Brownian motion}
Let the underlying path \(\bX\) be an \(d\)-dimensional Brownian motion \(\bB\). 
Since, Brownian motion is $\alpha$-H\"older for any \(\alpha < \frac{1}{2}\) so the number of components that actually matter to get the extended path is  \(N=2\). With the help of the previous explicit construction, the components of the extended path are
\begin{align*}
\Bar{\bB}_t^{\Forest{decorated[a]}} - \Bar{\bB}_s^{\Forest{decorated[a]}} &= \bB_t^{\Forest{decorated[a]}} - \bB_s^{\Forest{decorated[a]}}\\
\Bar{\bB}_t^{\Forest{decorated[b[a]]}} - \Bar{\bB}_s^{\Forest{decorated[b[a]]}} &= \int_s^t  \int_s^{r_2} d\bB_{r_1}^{\Forest{decorated[a]}}d\bB_{r_2}^{\Forest{decorated[b]}} - \int_s^t  \int_s^{r_2} d\Bar{\bB}_{r_1}^{\Forest{decorated[a]}}d\Bar{\bB}_{r_2}^{\Forest{decorated[b]}} = - \frac{1}{2}[\bB^{\Forest{decorated[a]}}_t-\bB^{\Forest{decorated[a]}}_s, \bB^{\Forest{decorated[b]}}_t-\bB^{\Forest{decorated[b]}}_s],
\end{align*}
for \(a,b \in \cS\). Here the iterated integral \(\int_s^t  \int_s^{r_2} d\bB_{r_1}^{\Forest{decorated[a]}}d\bB_{r_2}^{\Forest{decorated[b]}}\) is defined in It\^o sense , while \(\int_s^t  \int_s^{r_2} d\Bar{\bB}_{r_1}^{\Forest{decorated[a]}}d\Bar{\bB}_{r_2}^{\Forest{decorated[b]}}\) is defined in Stratonovich sense. The term \(\frac{1}{2}[\bB^{\Forest{decorated[a]}}_t-\bB^{\Forest{decorated[a]}}_s, \bB^{\Forest{decorated[b]}}_t-\bB^{\Forest{decorated[b]}}_s]\) is nothing but the co-variation of the components of the \(d\)-dimensional Brownian motion on the interval \([s,t]\). Hence, the extended Brownian motion path $\Bar{\bB}$ is
\begin{equation*}
\Bar{\bB} = \left(\Bar{\bB}^{\Forest{decorated[a]}}, \Bar{\bB}^{\Forest{decorated[b[a]]}}\right)_{a, \, b,\, \in \, \cS}.
\end{equation*}
In case of $1$-dimensional Brownian motion this reduces to 
\begin{equation*}
\Bar{\bB}_{st} = \left(\Bar{\bB}^{\Forest{decorated[a]}}, \Bar{\bB}^{\Forest{decorated[a[a]]}}\right)_{st} = \left(\bB^{\Forest{decorated[a]}}_t - \bB^{\Forest{decorated[a]}}_s, -\frac{1}{2}(t-s)\right),
\end{equation*}
where the second component is nothing but the It\^o-Stratonovich correction.
\subsubsection{Multi-dimensional fractional Brownian motion}
Let the underlying path \(\bX\) be a \(d\)-dimensional fractional Brownian motion \(\bB^H=(B^{H,\Forest{decorated[a]}})_{a\in\cS}\) with Hurst index \(H\in(\tfrac14,\tfrac13]\) and correlation matrix \(\rho=(\rho_{ab})_{a,b\in\cS}\), so that \(\mathrm{Cov}(B^{H,\Forest{decorated[a]}}_t,B^{H,\Forest{decorated[b]}}_t)=\rho_{ab}\,t^{2H}\).
Since fBm is \(\alpha\)-Hölder for any \(\alpha<H\), the number of components that actually matter to get the extended path is \(N=3\).
With the help of the previous explicit construction, the components of the extended path are
\[
\Bar{\bB}^{H,\Forest{decorated[a]}}_t-\Bar{\bB}^{H,\Forest{decorated[a]}}_s
= \bB^{H,\Forest{decorated[a]}}_t-\bB^{H,\Forest{decorated[a]}}_s,
\]
and
\[
\begin{aligned}
\Bar{\bB}^{H,\Forest{decorated[b[a]]}}_t-\Bar{\bB}^{H,\Forest{decorated[b[a]]}}_s
&= \int_s^t\!\!\int_s^{r_2} d\bB^{H,\Forest{decorated[a]}}_{r_1}\,d\bB^{H,\Forest{decorated[b]}}_{r_2}
\;-\;
\int_s^t\!\!\int_s^{r_2} d\Bar{\bB}^{H,\Forest{decorated[a]}}_{r_1}\,d\Bar{\bB}^{H,\Forest{decorated[b]}}_{r_2}
\\
&= -\tfrac12\,\rho_{ab}\,(t^{2H}-s^{2H}),
\end{aligned}
\]
for \(a,b\in\cS\). Here the first iterated integral is the canonical (Gaussian/Wick–Skorohod) double integral, while the second is the corresponding Stratonovich/rough integral along \(\Bar{\bB}^H\), and the difference is the normal-ordering correction determined by the covariance.
For third order, write \(R_{ac}(t,t)=\rho_{ac}\,t^{2H}\) and note \(\partial_t R_{ac}(t,t)=2H\,\rho_{ac}\,t^{2H-1}\). Then, for \(a,b,c\in\cS\),
\[
\begin{aligned}
\Bar{\bB}^{H,\Forest{decorated[c[b[a]]]}}_t-\Bar{\bB}^{H,\Forest{decorated[c[b[a]]]}}_s
&= \int_s^t\!\!\int_s^{r_3}\!\!\int_s^{r_2}
d\bB^{H,\Forest{decorated[a]}}_{r_1}\,
d\bB^{H,\Forest{decorated[b]}}_{r_2}\,
d\bB^{H,\Forest{decorated[c]}}_{r_3}
\\[-2pt]
&\quad -\;
\int_s^t\!\!\int_s^{r_2}
d\Bar{\bB}^{H,\Forest{decorated[a]}}_{r_1}\,
d\Bar{\bB}^{H,\Forest{decorated[c[b]]}}_{r_2}
\;-\;
\int_s^t\!\!\int_s^{r_2}
d\Bar{\bB}^{H,\Forest{decorated[b[a]]}}_{r_1}\,
d\Bar{\bB}^{H,\Forest{decorated[c]}}_{r_2}
\\
&\quad -\;
\int_s^t\!\!\int_s^{r_3}\!\!\int_s^{r_2}
d\Bar{\bB}^{H,\Forest{decorated[a]}}_{r_1}\,
d\Bar{\bB}^{H,\Forest{decorated[b]}}_{r_2}\,
d\Bar{\bB}^{H,\Forest{decorated[c]}}_{r_3}
\\
&= -\,H\,\rho_{ac}\,\int_s^t \bB^{H,\Forest{decorated[b]}}_{r}\, r^{2H-1}\,dr,
\end{aligned}
\]
and for the tree \(h = \Forest{decorated[c[a][b]]}\),
\[
\begin{aligned}
\Bar{\bB}^{H,\Forest{decorated[c[a][b]]}}_t-\Bar{\bB}^{H,\Forest{decorated[c[a][b]]}}_s
&= \int_s^t
\Big(\int_s^{r_2} d\bB^{H,\Forest{decorated[a]}}_{r_1}\Big)
\Big(\int_s^{r_2} d\bB^{H,\Forest{decorated[b]}}_{r_1}\Big)
d\bB^{H,\Forest{decorated[c]}}_{r_2}
\\[-2pt]
&\quad -\;
\int_s^t\!\!\int_s^{r_2}
d\Bar{\bB}^{H,\Forest{decorated[a]}}_{r_1}\,
d\Bar{\bB}^{H,\Forest{decorated[c[b]]}}_{r_2}
\;-\;
\int_s^t\!\!\int_s^{r_2}
d\Bar{\bB}^{H,\Forest{decorated[b]}}_{r_1}\,
d\Bar{\bB}^{H,\Forest{decorated[c[a]]}}_{r_2}
\\
&\quad -\;
\int_s^t\!\!\int_s^{r_3}\!\!\int_s^{r_2}
d\Bar{\bB}^{H,\Forest{decorated[b]}}_{r_1}\,
d\Bar{\bB}^{H,\Forest{decorated[a]}}_{r_2}\,
d\Bar{\bB}^{H,\Forest{decorated[c]}}_{r_3}
\\
&\quad -\;
\int_s^t\!\!\int_s^{r_3}\!\!\int_s^{r_2}
d\Bar{\bB}^{H,\Forest{decorated[a]}}_{r_1}\,
d\Bar{\bB}^{H,\Forest{decorated[b]}}_{r_2}\,
d\Bar{\bB}^{H,\Forest{decorated[c]}}_{r_3}
\\
&= -\,H\,\rho_{bc}\,\int_s^t \bB^{H,\Forest{decorated[a]}}_{r}\, r^{2H-1}\,dr
\;-\;H\,\rho_{ac}\,\int_s^t \bB^{H,\Forest{decorated[b]}}_{r}\, r^{2H-1}\,dr.
\end{aligned}
\]

Hence, the extended fractional Brownian motion path \(\Bar{\bB}^H\) (up to level \(3\)) is
\[
\Bar{\bB}^H
=
\Big(\Bar{\bB}^{H,\Forest{decorated[a]}},
\ \Bar{\bB}^{H,\Forest{decorated[b[a]]}},
\ \Bar{\bB}^{H,\Forest{decorated[c[b[a]]]}},
\ \Bar{\bB}^{H,\Forest{decorated[c[a][b]]}}\Big)_{a,b,c\in\cS}.
\]
In case of \(1\)-dimensional fractional Brownian motion this reduces to
\[
\Bar{\bB}^H_{st}
=
\left(
\bB^H_t-\bB^H_s,\;
-\tfrac12\,(t^{2H}-s^{2H}),\;
-\,H\!\int_s^t \bB^H_r\, r^{2H-1}\,dr
\right),
\]
where the second and third components are the covariance-driven normal-ordering corrections associated with \(R(t,t)=t^{2H}\).

\subsection{Data-driven construction learned via a neural network}
Because the driving noise of the primary process is unknown i.e., whether it is Brownian motion, fractional Brownian motion, or some other stochastic input, we cannot prescribe in advance how the extended path should be constructed. Instead, we adopt a supervised-learning approach within a neural-network framework. Concretely, we observe a response \(\bY(t)\) (e.g., the solution of an SDE/CDE/RDE) that is driven by a signal \(\bX(t)\). Learning \(\bY(t)\) directly from the classical (geometric) signature of \(\bX(t)\) may be insufficient, since certain interactions are only captured by branched signatures and are not recoverable from purely geometric features.

Rather than constructing a branched signature explicitly, we learn a parametric extension of the primary signal, \(t \mapsto \bar{\bX}_\theta(t)\), with \(\bar{\bX}_\theta(t) \in \mathbb{R}^{m}\), where these \(m\) latent coordinates are learned to encode the non-geometric information that a branched signature would otherwise carry. Once we have access to the neural-network output \(\bar{\bX}_\theta(t)\), we concatenate this with the actual path \(\bX(t)\) and define \(\bbX_{\theta}(t) := (\bX(t), \bfX_{\theta}(t)) \in \RR^{d+m}\). The reason to concatenate the actual path is to be consistent with the extension map defined in the previous section i.e., the extension given by \cite{hairer2015geometric}. After this concatenation, we apply the classical signature of some order \(k > 1\) to this extended path and fit \(\bY(t)\) from these features. The training is performed with a loss function that is a combination of the physics-informed loss and the shuffle property loss. The physics-informed loss balances the data fit and is given as 
\begin{equation} \label{pysics_informed loss}
\mathcal{L}_{\mathrm{physics-informed}}(\theta,\phi) := \frac{1}{N}\sum_{t_i \in \pi[0,T]} \Big\| \bY_{t_i} - g_\phi\!\big(\sig^{k}(\bbX_\theta)_{0t_i}\big)\Big\|^2,
\end{equation}
where \(\pi[0,T]\) is some partition of the interval of consideration and \(g_{\phi}\) is some predictor function like a linear layer etc. and \(N\) is the length of the partition \(\pi[0,T]\). Shuffle property loss ensures that the iterated integrals satisfy the integration by parts(shuffle) property. Here we don't rely on the signature computed via \textit{iisignature} or \textit{signatory} as they are always geometric because they use idea of Stratonovich integration. Instead, we compute the integrals using left-hand point Riemann sum similar to It\^o integration. The corresponding loss function is given as follows
\begin{equation} \label{shuffle product loss}
\mathcal{L}_{\mathrm{shuffle}}(\theta) := \frac{1}{N} \sum_{t_i \in \pi[0,T]} \sum_{j,k \in \Bar{\cS}} \Big\| \Delta \Bar{\bX}^{j}_{\theta}(t_0,t_i)\Delta \Bar{\bX}^{k}_{\theta}(t_0,t_i) - \int_{t_0}^{t_i} \Delta \Bar{\bX}^{j}_{\theta}(t_0,s)d \bar{\bX}^k_\theta(s) - \int_{t_0}^{t_i} \Delta \Bar{\bX}^{k}_{\theta}(t_0,s)d \bar{\bX}^j_\theta(s) \Big\|^2,
\end{equation}
where \(\Delta \Bar{\bX}^{k}_{\theta}(t_0,t_i) := \bar{\bX}^k_\theta(t_i) - \bar{\bX}^k_\theta(t_0)\), \(\Bar{\bX}^{k}_{\theta}\) is \(k\)-th component of \(\Bar{\bX}_{\theta}\), and \(\Bar{\cS}\) is set of cardinality \(m\) and is the alphabet set over the components of \(\Bar{\bX}_{\theta}\). Also, the integral inside the shuffle loss \(\mathcal{L}_{\mathrm{shuffle}}(\theta)\) is defined as follows
\[\int_{t_0}^{t_i} \Delta \Bar{\bX}^{j}_{\theta}(t_0,s)d \bar{\bX}^k_\theta(s) = \sum_{l=1}^m \left(\bar{\bX}^j_\theta(s_{l-1}) - \bar{\bX}^j_\theta(s_0) \right) \left(\bar{\bX}^k_\theta(s_{l}) - \bar{\bX}^k_\theta(s_{l-1}) \right), \]
where the sum is over the partition of \([0,t_i]\) i.e., \(\{0=s_0 < s_1 < \dots < s_m = t_i\}\).

With this, the total loss function for the training becomes
\[
\mathcal{L}(\theta,\phi) = \lambda_p \mathcal{L}_{\mathrm{physics-informed}}(\theta,\phi) + \lambda_s \mathcal{L}_{\mathrm{shuffle}}(\theta),
\]
where \(\lambda_p\) and \(\lambda_s\) are weights corresponding to physics-informed loss and shuffle loss respectively.  These weights can be chosen wisely to train the model efficiently. Finally, this strategy allows the extended geometric signature \(\sig^{k}(\bbX_\theta)\) to emulate the expressive content of a branched signature while remaining trainable end-to-end from data.

\section{Numerical Experiments}\label{section numerical}
In this section, we will present an experiment to validate our data-driven construction method of path extension. Our experiments will primarily be related to stock and variance path calibration. The most general form of the volatility model that we will consider for our experiments follows the coupled dynamics as below,
\begin{equation} \label{volitility model}
\left\{
\begin{aligned}
dS_t & = f_1(S_t,V_t,t)\,dt \;+\; g_1(S_t,V_t,t)\,d\bB_t ,\\[6pt]
dV_t & = f_2(S_t,V_t,t)\,dt \;+\; g_2(S_t,V_t,t)\,d\bB_t \;+\; h(S_t,V_t,t)\,d\bB^H_t,
\end{aligned}
\right.
\end{equation}
where \(f_1, f_2, g_1, g_2, h : \RR^2 \times [0,\infty) \to \RR\) are sufficiently smooth functions. Here \(S_t\) shows the asset price process while \(V_t\) is the variance process and their combined dynamics is driven by the process \((t, \bB, \bB^H)\), where \(\bB\) is the Brownian motion and \(\bB^H\) is the fractional Brownian motion with Hurst parameter \(H\).

In practice, the asset price process and the variance process are typically correlated. In Eq.~\eqref{volitility model}, this dependence is built by introducing the same Brownian motion term \(\bB\). Moreover, the dependence of \(V_t\) on \(S_t\) is a particular instance of volatility model where  (spot) volatility depends on the past of the price trajectory. Such type of path dependent volatility models are considered in \cite{guyon2023volatility}. If the fractional Brownian motion \(\bB^H\) is replaced by another independent Brownian motion \(\Bar{\bB}\) then we recover many classical models-e.g., the Stein-Stein Model \cite{stein1991stock}, the Heston model \cite{heston1993closed}, the Bergomi model \cite{bergomi2005smile}  etc. However, there are many rough volatility models where fractional Brownian motion \(\bB^H\) appears and drives the variance process. For example, when there is no Brownian term in the dynamics of \(V_t\) i.e., \(g_2 = 0\), we recover the rough Heston model \cite{el2019characteristic}, the rough Bergomi model \cite{bayer2016pricing}, the quadratic rough Heston model \cite{gatheral2020quadratic} etc.

For our numerical experiments we consider the following rough volatility model
\begin{equation} \label{rough vol}
\left\{
\begin{aligned}
\frac{\mathrm{d}S_t}{S_t}
&= -\frac{1}{2} \lambda_1\left(
\frac{a^2\,(V_t-a)\,V_t^{b}}{\sqrt{a\,(V_t-a)^2 + a}}
+ a\,(V_t-a)^2 + a
\right)\mathrm{d}t + \lambda_2 \left(a\,(V_t-a)^2 + a\right)\,\mathrm{d}\bB_t
,\\
\mathrm{d}V_t
&= \lambda_1 \left(a(1 + V_t)\,\mathrm{d}t\right) + \lambda_2 \left(a\,V_t^{b}\,\mathrm{d}\bB_t
+ a\,V_0^{b}\,\mathrm{d}\bB^H_t \right),
\end{aligned}
\right.
\end{equation}
where \(\lambda_1\) and \(\lambda_2\) are chosen to put selective weights on the corresponding terms. The choice of this volatility model is inspired by \cite{bonesini2024rough}. In particular, the term inside the square root is borrowed from the quadratic rough Heston model \cite{gatheral2020quadratic}, the dynamics of variance path is similar to one studied in \cite{jones2003dynamics} except we have fractional Brownian motion instead of another correlated Brownian motion etc. After the model selection, we discuss the numerical simulation, learning of the extension map and calibration in the subsequent subsections. 
\subsection{Simulation}
To simulate the price and variance process, we select the parameters in the model to be \(a= 0.1, b = 3.0, \lambda_1 = 0.0001,\) and \( \lambda_2 = 3.0\). This particular choice of \(\lambda_1\) and \(\lambda_2\) is made to put less weight on the drift term and more on the noise term. We simulate paths of  Brownian motion \(\bB_t\) and fractional Brownian motion \(\bB^H_t\) with Hurst parameter \(H=0.1\) of a length \(N\) with \(N=1000\) on the interval \([0,1]\). Fractional Brownian motion path is simulated  using Davies-Harte method \cite{davies1987tests} and the choice of Hurst parameter is motivated by the study done in \cite{gatheral2022volatility}. Furthermore, we use simple Euler-Maruyama scheme to simulate the price and variance path i.e., after fixing \(S_0 = 1\) and \(V_0=0.8\), we run the following for \(n=0,1,\cdots,N-1\).

\begin{equation*}\label{euler}
\left\{
\begin{aligned}
S_{n+1} &= S_n
+ f_1(S_n,V_n,t_n)\,\Delta t
+ g_1(S_n,V_n,t_n)\,\Delta \bB_{n+1},\\
V_{n+1} &= V_n
+ f_2(S_n,V_n,t_n)\,\Delta t
+ g_2(S_n,V_n,t_n)\,\Delta \bB_{n+1}
+ h(S_n,V_n,t_n)\,\Delta \bB^H_{n+1},
\end{aligned}
\right.
\end{equation*}
where \(t_n = n\,\Delta t, \; \Delta \bB_{n+1} := \bB_{n+1} - \bB_{n}, \; \Delta \bB^H_{n+1} := \bB^H_{n+1} - \bB^H_{n}\) and the functions \(f_1, f_2, g_1, g_2\) and \(h\) are already defined in Eq.~\eqref{rough vol}.

\subsection{Learning the extension map}
To learn the extended path \(t \mapsto \bar{\bX}_\theta(t)\), we set \(\bbX_\theta(t) := (\bX(t), \bfX_{\theta}(t))\), where the coordinates corresponding to the extended path \(\bar{\bX}_\theta(t)\) are produced by a multi-layer perceptron (MLP) \(\bar{\bX}_\theta(t) \colon \RR^{3}\to\RR^{m}\) as a function of the \(\bX(t)\). In our implementation, \(\bar{\bX}_\theta(t)\) uses six hidden layers with widths \(512\text{--}256\text{--}128\text{--}64\text{--}32\text{--}16\), \(\tanh\) activations throughout, and output dimension \(m=9\). Prediction proceeds sequentially via \emph{layer-wise signature models} i.e., rather than forming a single high-depth signature (which would be computationally expensive as we discussed in subsection \ref{layerwise section}), we apply a depth-\(N_1=2\) signature model to the features \((\bX(t), \bfX_{\theta}(t))_{[0,t]}\) and map it through a linear layer to an scalar \(\widehat{X}_t\); we then augment the true input path with this output from the signature model, apply a second depth-\(N_2=2\) signature model of \((\bX,\widehat{X})_{[0,t]}\), and pass it through a second linear layer to obtain \(\widehat{V}_t\), our estimate of the target volatility \(V_t\).

The training process minimizes a cost function that is a linear combination of a pathwise calibration loss ~\eqref{pysics_informed loss}
and a shuffle product loss ~\eqref{shuffle product loss}. Here \(\widehat{V}_{t_i}\) is given by the term \(\sig^{k}(\bbX_\theta)_{0t_i}\) where one signature model of level-\(k\) is replaced by two signature models of each level-\(2\) combined with two linear layers. At each partition time \(t_i\), we pass the current input through the multilayer perceptron, then through the signature models and linear layers. The signature is computed on the prefix path, with previously observed values retained so the model explicitly incorporates the history of the data. Finally, we optimize the total loss \(\mathcal{L}\) using Adams (initial step size \(10^{-2}\) with step decay). The loss corresponding to each epoch is shown in Figure \ref{fig_losses} for number of epochs to be 2000. Backpropagation is performed on all the learnable parameters. Evaluation is performed by a full sequential pass, reporting the path-wise MSE i.e., \(\frac{1}{N+1}\sum_{i=0}^{N}(\widehat{V}_{t_i}-V_{t_i})^2\) and the terminal shuffle product residual on the learned extension. The shuffle product residual matrix corresponding to all the components is shown in Figure \ref{fig_shuffle}.

\begin{figure}[H]
  \centering
  \includegraphics[width=0.82\linewidth]{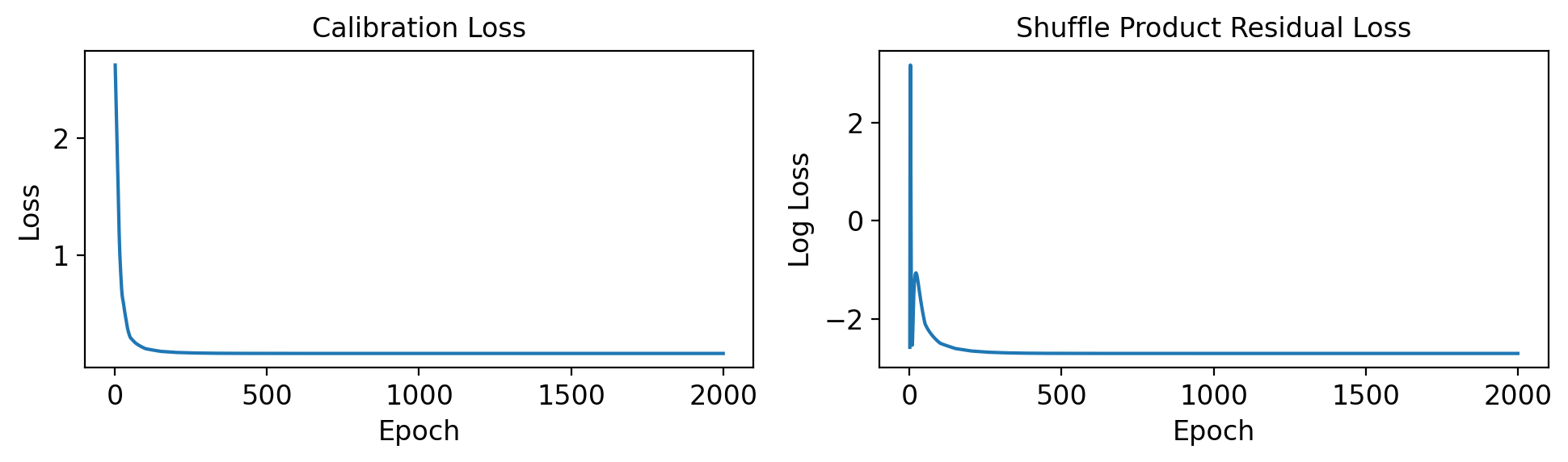}
  \caption{\textit{\textbf{Left:} calibration (path) loss per epoch. \textbf{Right:} shuffle-product residual loss per epoch. The right panel’s vertical axis is shown on a base-10 logarithmic scale.}}
  \label{fig_losses}
\end{figure}

\begin{figure}[H]
  \centering
  \includegraphics[width=0.4\linewidth]{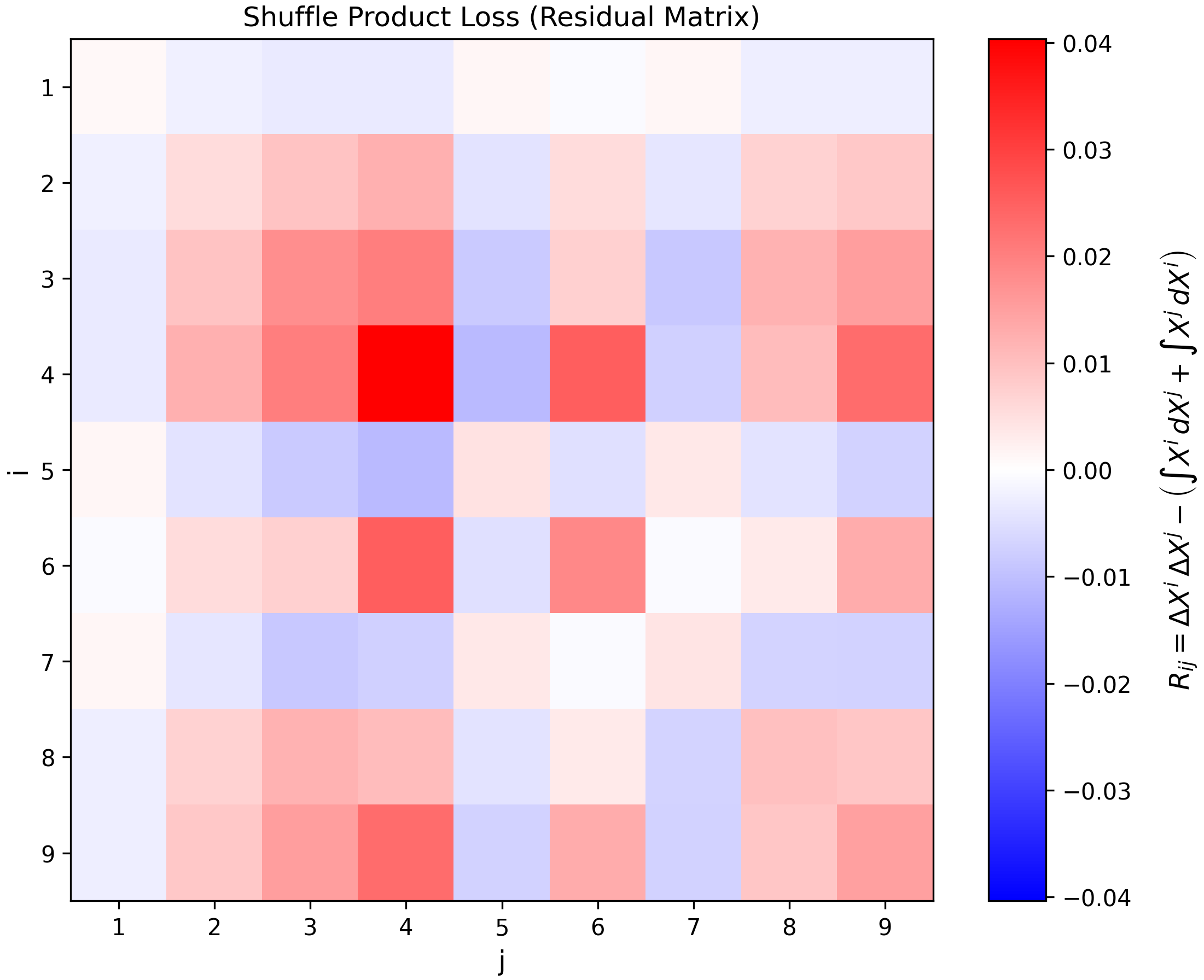}
  \caption{\textit{Shuffle product residual matrix. Each cell $(i,j)$ displays 
$R_{ij}=\Delta X^{i}\Delta X^{j}-\!\left(\int_0^T X^{i}\,dX^{j}+\int_0^T X^{j}\,dX^{i}\right)$ 
on the plotted interval. Color encodes sign and magnitude red for positive, blue for negative and white near to $0$; rows/columns are coordinate indices $i$ and $j$. Near-white regions indicate good numerical adherence to the identity, while darker patches show where deviations are large. Final shuffle product MSE mean over all components is \(6.1158 \times 10^{-3}\).}}
  \label{fig_shuffle}
\end{figure}

\subsection{Calibration}
To assess the performance of the learned extension to calibrate the underlying path that was used for training i.e., the variance path, we stop the further training of the network at certain epochs (say 1000) and assume that we recovered the appropriate parameters \(\theta^*\). We denote the learned extension by \(\bfX_{\theta^*}(t)\) and concatenate it with the actual path as before to get \(\bbX_{\theta^*}(t) = (\bX(t), \bfX_{\theta^*}(t))\). To extract the features from this extended path, we apply a signature of depth 2, fit a linear regression (with intercept) to calibrate the observed volatility \(V_t\) and record the MSE. As a baseline, we repeat the same procedure using signature of the original path 
\(\bX(t)\) (without the learned extension). We report the mean-squared-error MSE in this case too. Figure \ref{volatility calibration figure} shows both the full-path fits and zoomed-in windows, where the extended model consistently tracks local fluctuations more accurately than the baseline model i.e., the one without the extension.

Similarly, the price path \(S_t\) is also regressed against the actual path \(\bX(t)\) and one with extension \(\bbX_{\theta^*}(t) = (\bX(t), \bfX_{\theta^*}(t))\) and MSE is recorded in both cases. In Figure \ref{stock calibration figure}, both the global fits and the zoomed panels demonstrate that the extended model consistently follows local movements more closely than the baseline model i.e., the one without the extension.

\begin{figure}[H]
  \centering
  \begin{subfigure}{0.48\linewidth}
    \centering
    \includegraphics[width=1\linewidth]{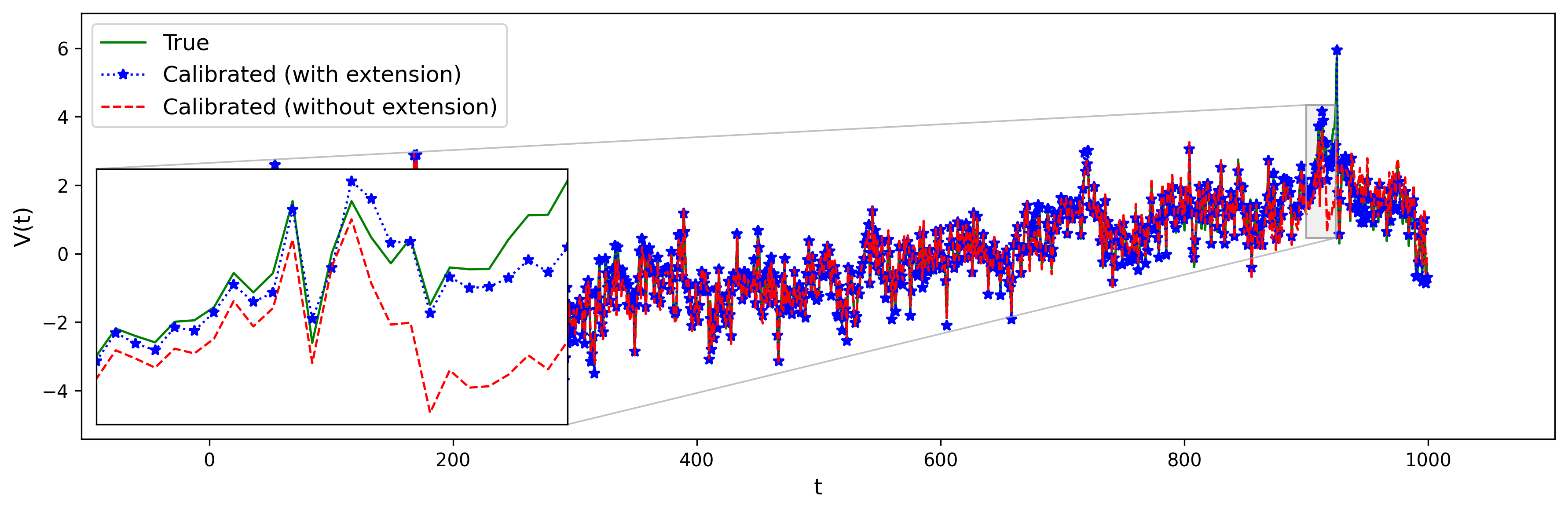}
    \caption{\textit{True vs.\ calibrated volatility \(V(t)\) using signatures \emph{with} and \emph{without} the learned extension. The inset provides a zoomed view that highlights the finer discrepancies between the two calibrated paths. Reported errors corresponding to with and without extension are \(\mathrm{MSE} = 2.0752 \times 10^{-2}\) and \(\mathrm{MSE} = 8.0134 \times 10^{-2}\) respectively.}}
    \label{volatility calibration figure}
  \end{subfigure}\hfill
  \begin{subfigure}{0.48\linewidth}
    \centering
    \includegraphics[width=1\linewidth]{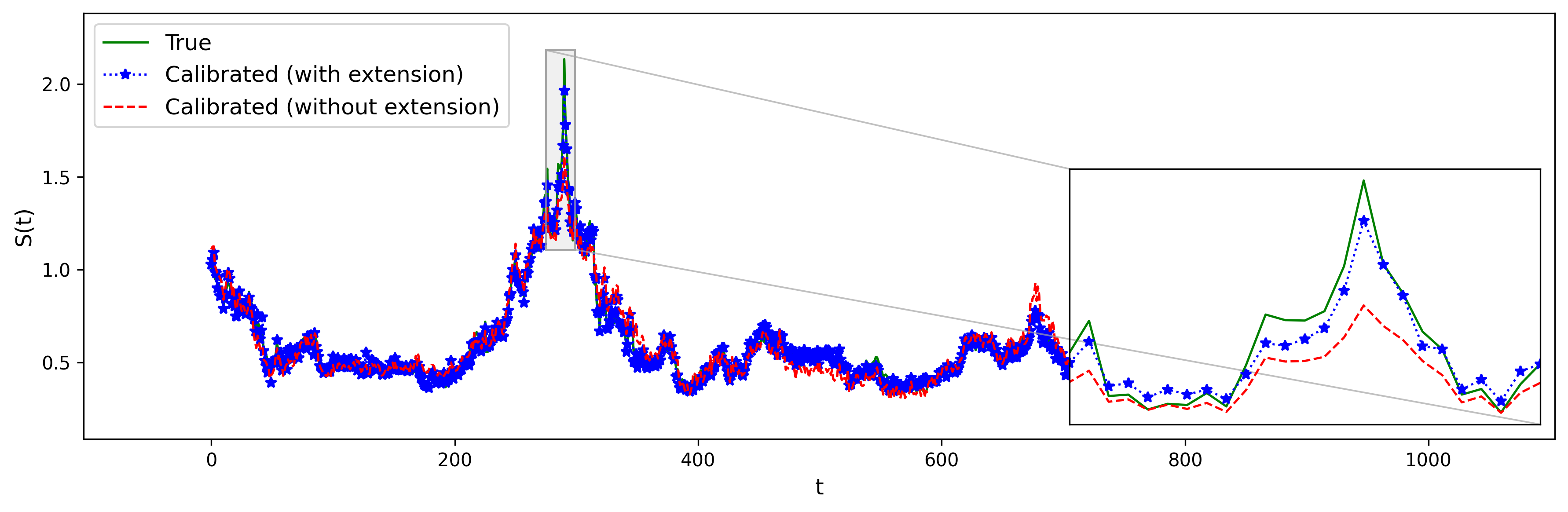}
    \caption{\textit{True vs.\ calibrated stock path \(S(t)\) using signatures \emph{with} and \emph{without} the learned extension. The inset provides a zoomed view that highlights the finer discrepancies between the two calibrated paths. Reported errors corresponding to with and without extension are \(\mathrm{MSE} = 8.8201 \times 10^{-4}\) and \(\mathrm{MSE} = 3.5401 \times 10^{-3}\) respectively.}}
    \label{stock calibration figure}
  \end{subfigure}
\end{figure}

\bibliographystyle{apalike}
\bibliography{myrefs}
\end{document}